\documentclass[a4paper,12pt]{amsart}
\usepackage{amsmath,amssymb,amsfonts,amsthm,exscale,calc}
\usepackage[utf8]{inputenc}
\usepackage{latexsym,textcomp}
\usepackage{graphicx,epsf}
\usepackage{a4wide}
\usepackage[german, english]{babel}
\usepackage{hyperref}
\usepackage{enumerate}

\selectlanguage{english}

\newtheorem{lemma}{Lemma}[section]
\newtheorem{theorem}[lemma]{Theorem}

\newtheorem{corollary}[lemma]{Corollary}

\theoremstyle{definition}
\newtheorem{definition}[lemma]{Definition}

\newtheorem*{remark}{Remark}
\newtheorem*{assumption}{Standing assumption}

\numberwithin{equation}{section}
\newcommand{\comment}[1]{}


\newcommand{\R}{{\mathbb R}}

\newcommand{\N}{{\mathbb N}}

\newcommand{\Qr}{Q_{\rho,V}}

\newcommand{\as}[1]{\left\langle #1\right\rangle}

\newcommand{\ow}[1]{\widetilde{ #1}}

\newcommand{\Hm}[1]{\leavevmode{\marginpar{\tiny%
$\hbox to 0mm{\hspace*{-0.5mm}$\leftarrow$\hss}%
\vcenter{\vrule depth 0.1mm height 0.1mm width \the\marginparwidth}%
\hbox to 0mm{\hss$\rightarrow$\hspace*{-0.5mm}}$\\\relax\raggedright
#1}}}


\newcommand{\vo}{{\rm vol}_g}
\newcommand{\Dm}{\Delta}
\newcommand{\dm}{\mathcal{D}(M)}
\newcommand{\dmp}{\mathcal{D}'(M)}

\newcommand{\cL}{\mathcal{L}_{\rho,V}}

\newcommand{\Lol}{L^1_{\rm loc}(M)}
\newcommand{\Lr}{L_{\rho,V}}
\newcommand{\Tr}{T^{\rho,V}_t}
\newcommand{\Gr}{G^{\rho,V}_\alpha}
\newcommand{\Qt}{\widetilde{Q}_{\rho,V}}

\begin{document}
\title[A generalized conservation property]
{A generalized conservation property for the heat semigroup on weighted manifolds}

\author[Masamune]{Jun Masamune}
\address{J. Masamune, 
Department of Mathematics, 
Faculty of Science,
Hokkaido University
Kita 10, Nishi 8, Kita-Ku, Sapporo, Hokkaido, 060-0810, Japan 
} \email{jmasamune@math.sci.hokudai.ac.jp}

\author[Schmidt]{Marcel Schmidt}
\address{M. Schmidt, Mathematisches Institut \\Friedrich Schiller Universit{\"a}t Jena \\07743 Jena, Germany } \email{schmidt.marcel@uni-jena.de}

\begin{abstract}
In this text we study a generalized conservation property for the heat semigroup generated by a Schrödinger operator with nonnegative potential on a weighted manifold. We establish Khasminskii's criterion for the generalized conservation property and discuss several applications.
\end{abstract}


\maketitle
\section{Introduction}

There are several ways to introduce Brownian motion on (weighted) Riemannian manifolds. One approach is through Markovian semigroups induced by self-adjoint realizations of the Laplacian. More precisely, if $\Delta_D$ is the self-adjoint realization of the  Laplacian with generalized Dirichlet boundary conditions, it follows from  standard theory on regular Dirichlet forms that there exists a unique Markov process $(B_t)_{t >0}$ on the manifold $M$ with life-time $\zeta$ such that the operator (or rather the semigroup it generates) and the process correspond through the Feynman-Kac formula
$$e^{t \Delta_D} f (x) = \mathbb{E}_x \left[ f(B_t) 1_{\{t < \zeta\}}\right], \quad x \in M, t > 0.$$
Here, $\mathbb{E}_x$ denotes the expectation with respect to the process  started at $x \in M$; we refer to \cite{FOT} for details. The so-constructed process $(B_t)_{t > 0}$ is called the {\em minimal Brownian motion} on $M$. If the underlying manifold is a bounded open subset of Euclidean space, minimal Brownian motion is Euclidean Brownian motion stopped upon hitting the boundary and the life-time  is given by its first exit time from the domain.

One fundamental question about minimal Brownian motion is about its {\em stochastic completeness}, i.e., whether its life-time is infinite. It follows from the Feynman-Kac formula that an infinite life-time is equivalent to 
$$\mathbb{P}_x(B_t \in M) = e^{t \Delta_D} 1 (x) = 1, \quad x \in M, t>0.$$
Due to the second equality involving the semigroup, in this case we also say that $(e^{t \Delta_D})_{t > 0}$ is {\em conservative} (the semigroup solution to the heat equation preserves the total amount of heat in the system).  Of course, on bounded open subsets of Euclidean space Brownian motion will eventually exit the domain so that it is always stochastically incomplete. Since bounded open domains in Euclidean space are not geodesically complete, one may wonder whether stochastic completeness is related to geodesic completeness but in \cite{Aze} a geodesically complete but stochastically incomplete manifold is constructed. It is a manifold with large negative sectional curvature, which forces Brownian motion to exit the manifold in finite time. 

This example brought up the need to investigate the interplay between the geometry of the manifold and stochastic completeness. Here we mention the pioneering works \cite{Gaf,Kas,Yau} that culminated in \cite{Gri2} with the insight that on a geodesically complete manifold stochastic completeness is related to the volume growth of balls. More precisely, geodesically complete manifolds whose volume of balls does not grow too fast are stochastically complete,  see also \cite{Dav,Gri1,Tak89} for related material. Since the discussed construction of Brownian motion through the Feynman-Kac formula uses the abstract machinery of Dirichlet forms, it does not come as a surprise that an optimal volume growth criterion for stochastic completeness was later also obtained for diffusion processes induced by strongly local regular Dirichlet forms on more general state spaces than manifolds, see \cite{Stu1}, and for jump processes on discrete spaces, see \cite{Fol,Hua}.

All proofs for the optimal volume growth test for stochastic completeness have in common that they are based on an analytic criterion  of Khasminskii \cite{Kas}. It says that stochastic completeness holds if and only if  all bounded smooth solutions to the initial value problem for the heat equation
$$\begin{cases}
 \partial_t u = \Delta u    \\
 u_0 =0
\end{cases} $$
equal zero.

From the PDE  viewpoint it is also natural to study this uniqueness property of the heat equation  with the Laplacian replaced by the Schrödinger operator $H = \Delta  - V$. Moreover, if $V$ is nonnegative, the self-adjoint realization $H_D = \Delta_D  - V$ with generalized Dirichlet boundary conditions   generates a Markovian semigroup $(e^{tH_D})_{t>0}$ and hence corresponds to a Markov process through the Feynman-Kac formula. Locally this process behaves like Brownian motion, but certain paths are stopped earlier  as the nonnegative potential drains heat from inside the system.

One may wonder whether in this case also  conservativeness of the semigroup generated by $H_D$ is related to uniqueness of bounded smooth solutions to the heat equation with respect to $H$. While the latter property may or may not be satisfied, if $V \neq 0$, the semigroup $(e^{tH_D})_{t>0}$ is never conservative. This follows from the observation that $(t,x) \mapsto e^{tH_D}1(x)$ is always a solution to the heat equation $\partial_t u = Hu$ while  constant functions do not solve this equation when $V \neq 0$. However, it was recently observed in \cite{KL} for discrete Schrödinger operators with nonnegative potentials (such operators are induced by infinite weighted graphs) that uniqueness of bounded solutions to the heat equation is equivalent to a generalized conservation property. On manifolds the {\em generalized conservation property} takes the form
$$e^{tH_D} 1 (x) + \int_0^t e^{sH_D} V (x) ds = 1 , \quad x \in M, t>0.$$
Roughly speaking, the additional quantity $\int_0^t e^{sH_D} V (x) ds$ corresponds to the probability that the associated process  is stopped up to time $t$ due to the presence of the potential.  Hence, the generalized conservation property says that heat is only lost inside 
the manifold due to the presence of the potential and not at ``infinity''.  

It is the main goal of this paper to extend Khasminskii's criterion for the generalized conservation property to Schrödinger operators on manifolds. This is achieved in Theorem~\ref{theorem:non conservative} and Theorem~\ref{theorem:conservative}. Besides the criterion involving uniqueness of bounded solutions to the heat equation  we also give a version for uniqueness of bounded solutions to the eigenvalue equation.  Moreover, we discuss several applications. 

We show that on stochastically complete manifolds the generalized conservation property holds for all nonnegative potentials. Since this result is perturbative in nature, we would like to stress that the generalized conservation property is not obtained by mere perturbation theory. More precisely,  there exists a stochastically incomplete manifold and a potential such that $H_D$ satisfies the generalized conservation property, see the discussion in Subsection~\ref{subsection:model manifolds} and Subsection~\ref{subsection:large potentials}. 

As a second application we prove that the generalized conservation property for $H_D$ is equivalent to the conservation property of $(1 + V)^{-1}\Delta_D$ considered on the $L^2$-space with respect to the changed measure $(1+ V) \vo$, see Theorem~\ref{theorem:criteria for generalized conservation}. In terms of associated processes the latter operator corresponds to a diffusion process on the manifold obtained from a time change of minimal Brownian motion. This observation is new, in particular it is not included in the aforementioned \cite{KL} for  discrete spaces. Since geometric criteria for conservativeness of general diffusions are rather well-understood, see \cite{Stu1}, it opens the way for geometric criteria  for the generalized conservation property. In Subsection~\ref{subsection:model manifolds} we pursue this strategy to characterize the generalized conservation property  through volume growth criteria  on model manifolds. Similar results on discrete spaces for weakly spherically symmetric graphs are contained in \cite{KLW}. Moreover, in Subsection~\ref{subsection:large potentials} we use the volume growth test for stochastic completeness from \cite{Stu1} to show that on every complete manifold there is a potential such that the semigroup generated by $H_D$ is conservative in the generalized sense. 

Minimal Brownian motion is defined with respect to the Laplacian with generalized Dirichlet boundary conditions. Other self-adjoint realizations of the Laplacian that generate Markovian semigroups can lead to other instances of Brownian motion according to the corresponding boundary conditions. However, it is known that if minimal Brownian motion is stochastically complete, there are no  self-adjoint realizations of the Laplacian that generate Markovian semigroups other than $\Delta_D$ and hence Brownian motion is unique, see e.g. \cite{GM,HKLMS, Kuw} for this result in various situations and generality. Analytically uniqueness of Brownian motion corresponds to the identity of the Sobolev spaces $W_0^1(M) = W^1(M)$ on the underlying manifold $M$. In Theorem~\ref{theorem:markov uniqueness} we extend the observation that stochastic completeness implies $W_0^1(M) = W^1(M)$ to the Schrödinger operator case, i.e., we prove that if $(e^{tH_D})_{t>0}$ satisfies the generalized conservation property, then $W^1_0(M,1 + V) = W^1(M,1 + V)$, where the latter denote weighted Sobolev spaces on the manifold, see Subsection~\ref{subsection:sobolev spaces} below.  

Concerning our presentation of the subject we aim at two different audiences, namely people working on PDEs on manifolds and people acquainted with Dirichlet forms and stochastics. In order to reach both, compromises are necessary at several places. We finish this introduction by  trying to explain  the compromises and discuss where the strengths and the limits of our methods lay.

   The most prominent class of spaces for global analysis are certainly smooth Riemannian manifolds. Therefore, we chose to focus on them, consider only smooth nonnegative potentials  and formulate the extension of Khasminskii's criterion and applications in the smooth category.  On a technical level this is possible because we can apply parabolic and elliptic local regularity theory, which we explain in detail.  
   
   As should  be clear from the preceding discussion, the concept of the generalized conservation property also makes sense for other Markovian semigroups and an extension of Khasminskii's criterion is certainly of interest for  Markovian semigroups associated with  regular Dirichlet forms. The methods that we use in this text are strong and abstract enough to treat this case.  We comment at the corresponding places on  necessary changes and believe that it should be no problem for experts on Dirichlet forms to fill the gaps.  However, since there is no local parabolic and elliptic regularity theory for general Dirichlet forms,  smooth strong solutions need to be replaced by weak solutions in the local form domain. As mentioned above, for operators on discrete spaces Khasminskii's criterion for the generalized conservation property is contained in \cite{KL} and on a very abstract level similar results (only the elliptic but not the parabolic part) can be found in \cite{Schmi}. In contrast to these two texts we would like to point out that we work with parabolic maximum principles instead of elliptic maximum principles. 
   
   We mentioned at various places in the introduction that the generalized conservation property can be understood in terms of associated Markov processes. However, in order to keep the text reasonably short and accessible,  our  definitions, methods and proofs are purely analytical. We hint on the stochastic relevance of certain formulas and theorems but leave details to the reader.

   {\bf Acknowledgements:} The authors would like to thank Matthias Keller and Daniel Lenz for introducing them to the generalized conservation property on discrete spaces. Moreover, they are grateful to Masatoshi Fukushima for interesting discussions on the stochastic background of the generalized conservation property. A substantial part of this work was done while M.S.\ was visiting GSIS at Tohoku University Sendai and the Department of Mathematics at Hokkaido University Sapporo and while J.M.\ was visiting Fakultät für Mathematik und Informatik at Friedrich-Schiller-Universität Jena.
They expresses their  warmest thanks  to  these  institutions.  Furthermore, they acknowledge the financial support of JSPS ``Program for Advancing Strategic International Networks to Accelerate the Circulation of Talented Researchers", and J.M.\ acknowledges the financial support of JSPS No.16KT0129.


%

\section{Preliminaries}
In this section we introduce the notation and the  objects that are used throughout the text. We basically follow \cite{Gri}, to which we  refer the reader for formal definitions. For a background on Dirichlet form theory see \cite{FOT,MR}. All functions in this text are real-valued.

\subsection{Distributions and Schrödinger operators}

Let $M=(M,g,\mu)$ be a smooth connected weighted Riemannian manifold without boundary. 
We assume that the measure $\mu$ has a smooth and strictly positive density  
$\Psi$ on $M$ against the Riemannian measure $\vo$.

For $1 \leq p < \infty$ we denote by $L^p(M,\mu)$ the Lebesgue space of $p$-integrable functions with norm $\|\cdot\|_p$. If $p = 2$, it is a Hilbert space with inner product
$$\as{f,g}_{\mu} = \int_M fg d \mu.$$
Since $\mu$ is assumed to have a smooth and strictly positive density with respect to $\vo$, the space of essentially bounded functions is independent of the particular choice of $\mu$ and we denote it by $L^\infty(M)$ and the corresponding norm by $\|\cdot\|_\infty$.  The same holds true for the local Lebesgue spaces; for $1 \leq p \leq \infty$ we denote them by $L^p_{\rm loc}(M)$. Similarly, we write $L^0(M)$ for the space of all real-valued $\mu$-a.e. defined measurable functions and $L^+(M)$ for the cone of all $[0,\infty]$-valued $\mu$-a.e. defined measurable functions. Note that functions in $L^+(M)$ may take the value $\infty$ on a set of positive measure.  For two functions $f,g \in L^0(M)$ we let $f \wedge g = \min\{f,g\}$ and $f \vee g = \max \{f,g\}$, where the maximum and minimum are taken pointwise $\mu$-a.e. Moreover, we write $f_+ = f \vee 0$ and $f_- = (-f) \vee 0$ for the positive and negative part of $f$, respectively.

The space of continuous functions on $M$ is denoted by $C(M)$ and equipped with the topology of uniform convergence on compact sets. We write $C_b(M)$ for the subspace of bounded continuous functions. Moreover, $C^\infty(M)$ is the space of smooth functions on $M$ and and $C_c^\infty(M)$ is the space of smooth functions of compact support.

The space of   {\em test functions} $\dm = C_c^\infty(M)$ is equipped with the usual locally convex topology, cf. \cite[Chapter~4.1]{Gri}. Its  continuous dual, the space of {\em distributions}, is denoted by $\dmp$ and we write $\as{\cdot,\cdot}$ for the  dual pairing between $\dmp$ and $\dm$. A sequence of distributions $(u_n)$ converges to a distribution $u$ if $\as{u_n,\varphi} \to \as{u,\varphi}$ for each $\varphi \in \dm$. We identify functions $f \in L^1_{\rm loc}(M)$ with the distribution
$$\dm \to \R,\, \varphi \mapsto \int_M f  \varphi d \vo.$$
 The so-obtained map $L^1_{\rm loc}(M) \to \dmp$ is injective and continuous. For later  it is important to note that we use the measure $\vo$ and not the measure $\mu$ for this identification. In particular, for $u \in L^2(M,\mu)$ (when viewed as distribution)  and $\varphi \in \dm$ we have
$$\as{u,\Psi \varphi} = \as{u,\varphi}_\mu,$$
where we recall that $\Psi$ is the smooth density of $\mu$ against $\vo$.

By $\Dm = \Dm_\mu$ we denote the {\em weighted Laplacian} of $(M,g,\mu)$ acting on $\dmp$; namely,
$$
\Dm u= \frac{1}{\Psi} \mbox{div}(\Psi \nabla u).
$$
Here, $\nabla$ and $\mbox{div}$ are the distributional versions of the gradient and the divergence operator, respectively. The Laplacian leaves $\dm$ invariant and satisfies
$$\as{\Dm u, \Psi \varphi} = \as{u, \Psi \Dm \varphi}, \quad u \in \dmp, \varphi \in \dm.$$
This identity and our choice of the inclusion $\Lol \hookrightarrow \dmp$ imply that the restriction of $\Dm$ to $\dm$ is a symmetric operator on $L^2(M,\mu)$.

To a  smooth function $V:M \to \R$  and a strictly positive smooth function $\rho:M \to (0,\infty)$  we associate the {\em Schr\"odinger operator} $\cL: \mathcal{D}'(M) \to \mathcal{D}'(M)$ via
$$
\cL= \rho^{-1}\Delta - \rho^{-1}V.
$$
Its restriction to $\dm$ is a symmetric operator on $L^2(M,\rho\mu)$. The function $V$ is called the {\em potential} and the function $\rho$ is called the {\em density} of $\cL$. Even though not strictly necessary, below we always assume the following.
\begin{assumption}
 The potential $V$ is nonnegative. 
 \end{assumption}
At various places we discuss which additional challenges a potential without a fixed sign poses and how these difficulties could possibly be overcome.

A distribution $u \in \mathcal{D}'((0,\infty) \times M)$ is called a (weak) {\em solution to the heat equation} (with respect to $\cL$) if 
$$\partial_t u  = \cL u$$
holds in the sense of $\mathcal{D}'((0,\infty) \times M)$. Due to local regularity theory, see Appendix~\ref{section:local regularity theory}, every weak solution to the heat equation automatically belongs to $C^\infty((0,\infty) \times M)$.  We usually write the time variable as a lower index, i.e., we let $u_t = u(t,\cdot)$ if $u$ is a function depending on time and space.  We say that a solution to the heat equation $u \in C^\infty((0,\infty) \times M)$ has {\em initial value $f \in C(M)$} if it continuously extends to $[0,\infty) \times M$ (in which case we write $u \in C([0,\infty)\times M)$) and satisfies $u_0 = f$. Note that this is equivalent to the local uniform convergence $u_t \to f$, as $t \to 0+$.



\subsection{Sobolev spaces and self-adjoint realizations} \label{subsection:sobolev spaces}
We define the {\em first order weighted Sobolev space} $W^1(M,\rho) = W^1_\mu(M,\rho)$ with respect to the density $\rho$ by
$$W^1(M,\rho) = \{f \in L^2(M,\rho \mu) \mid \nabla f \in \vec L^2(M,\mu)\}$$
and equip it with the norm 
$$\|f\|_{W_1} = \left( \int_M |\nabla f|^2 d\mu + \int_M f^2 \rho d\mu\right)^{1/2}.$$
The closure of $C_c^\infty(M)$ in this space is denoted by $W^1_0(M,\rho) = W_{0,\mu}^1(M,\rho).$ If $\rho = 1$, we simply write $W^1(M)$ and $W_0^1(M)$ for $W^1(M,1)$ respectively $W^1_0(M,1)$.

In this paper we are concerned with properties of the self-adjoint realization of $\cL$ that has generalized Dirichlet boundary conditions, which we introduce next. We let 
$$D(\Qr) = W_0^1(M,V+\rho)$$
the domain of the quadratic form $\Qr:D(\Qr) \times D(\Qr) \to \R$ on which it acts by
$$ Q_{\rho, V}(f,g) = \int_M \as{\nabla f,\nabla g} d\mu + \int_M fg V d\mu.$$
It is a  {\em Dirichlet form on $L^2(M,\rho \mu)$}, i.e., it is a nonnegative closed quadratic form and for any normal contraction $C$ (a $1$-Lipschitz function $C:\R \to \R$ with $C(0) = 0$) and all $f \in D(Q_{\rho, V})$ we have $C \circ f \in D(Q_{\rho,V})$ and $Q_{\rho, V}(C \circ f) \leq Q_{\rho,V}(f)$. Here and in what follows we drop one argument when evaluation the diagonal of bilinear forms. This means that we use the convention $Q_{\rho,V}(f) = Q_{\rho,V}(f,f)$ for $f \in D(Q_{\rho,V})$.

It follows from the distributional definition of $\cL$ that for all $f \in D(\Qr)$ with $\cL f \in L^2(M,\rho \mu)$ and $\varphi \in \dm$ we have 
$$\Qr(f,\varphi) = \as{-\Delta f + V f,\Psi \varphi}  = \as{-\cL f,\varphi}_{\rho \mu}.$$
We denote by $L_{\rho,V}$   the nonpositive self-adjoint operator on $L^2(M,\rho \mu)$ that is associated with the closed form $Q_{\rho, V}$. By the above integration by parts formula it is a  restriction of $\cL$ to the domain
$$D(L_{\rho,V})=\{f \in D(Q_{\rho,V}) \mid \mathcal{L}_{\rho,V} f \in L^2(M, \rho\mu)\},$$
i.e., it acts by
$$L_{\rho,V} f = \cL f, \quad f \in D(L_{\rho,V}).$$
\begin{remark}
 The operator $L_{\rho,V}$ can be thought of having generalized Dirichlet boundary conditions or Dirichlet boundary conditions ''at infinity``. If $V = 0$ and $\rho = 1$, then $L_{1,0}$ is a  self-adjoint realization of the weighted Laplacian.
 In our text it is important to allow nonvanishing potentials as well as some flexibility on the measure.
\end{remark} 
%
%
%
%
%
%
%
%
Since $\Qr$ is a Dirichlet form, the  associated semigroup $\Tr = e^{t\Lr}$, $t>0$,  and resolvent $ \Gr = (\alpha-\Lr)^{-1}$, $\alpha > 0$, are Markovian, i.e., for each $e \in L^2(M,\rho \mu)$ with $0 \leq e \leq 1$ they satisfy
$$0 \leq \Tr e \leq 1 \text{ and } 0 \leq \alpha \Gr e \leq 1.$$

Let $T:L^2(M,\rho\mu) \to L^2(M,\rho\mu)$  be a positivity preserving linear operator, i.e., an operator for which $f \geq 0$ implies $Tf \geq 0$. Then $T$ can be extended to an operator $\tilde T:L^+(M) \to L^+(M)$ by letting
$$\tilde Tf = \lim_{n \to \infty} Tf_n,$$
where $(f_n)$ is an increasing sequence of nonnegative functions in $L^2(M,\rho\mu)$ with $f_n \to  f$ $\mu$-a.e.  It is proven in \cite{Kaj} that this is well defined and a linear operator on the cone $L^+(M)$. In particular, if we let
$${\rm dom}(T) = \{f \in L^0(M) \mid \tilde T|f| < \infty\, \mu\text{-a.e.}\},$$
then $\tilde T: {\rm dom}(T) \to L^0(m)$ defined by $\tilde T f:= \tilde T{f_+} - \tilde T{f_-}$ is a linear operator that extends $T$. In what follows we shall abuse notation and write $T$ for the given operator and its extension to ${\rm dom}(T)$. We write ${\rm dom}$ for the domain of this extension because we reserve the capital $D$  for the $L^2$-domain of an unbounded operator or quadratic form.

The semigroup $(T^{\rho,V}_t)$ and the resolvent $(G_\alpha^{\rho,V})$ are positivity preserving.  It follows from their Markov property that $L^\infty(M) \subseteq {\rm dom}(T^{\rho,V}_t),{\rm dom}(G_\alpha^{\rho,V})$  and that $T^{\rho,V}_t 1 \leq 1$  and $\alpha G_\alpha^{\rho,V}1 \leq 1$. Therefore,  they act as contractions on the space $L^\infty(M)$. These extensions are weak-$*$-continuous on $L^\infty(M)$ in the parameters $t$ respectively $\alpha$.   Since the semigroup and the resolvent are self-adjoint on $L^2(M)$ and Markovian, by duality they can be extended to strongly continuous semigroups on $L^1(M,\rho\mu)$.

We recall the following basic definition concerning  the extension to $L^\infty(M)$.

\begin{definition}[Conservativeness and stochastic completeness]
The Dirichlet form $\Qr$ is called {\em conservative} if   $T^{\rho,V}_t 1 = 1$ for all $t > 0$.  The weighted manifold $(M,g,\mu)$ is called {\em stochastically complete} if the Dirichlet form $Q_{1,0}$ on $L^2(M,\mu)$ is conservative.
\end{definition}

\section{A generalized conservation property - definition and characterizations}

It is well a well known fact in Dirichlet form theory that Dirichlet forms with nonvanishing killing, in our case a nonvanishing potential $V$, cannot be conservative. In this section,  we introduce a generalized conservation criterion invoking the potential. It is inspired by the corresponding definition for infinite weighted graphs that was given in \cite{KL}.   We prove that Khasminskii's criterion \cite{Kas} for  stochastic completeness (conservativeness for the form with vanishing potential), which characterizes stochastic completeness in terms of unique solvability of the heat equation in $L^\infty$, remains valid for the generalized conservation property with the Laplacian replaced by the Schrödinger operator. This can be seen as the main result of our paper.

In what follows we let $\hat V = V / \rho$. For $t > 0$ we define $H_t \in L^+(M)$ by
$$H_t  = T^{\rho,V}_t 1 + \int_0^t T_s^{\rho,V} \hat V ds.$$
 Here, $\int_0^t T_s^{\rho,V} V ds$ is the $L^+(M)$-extension of the positivity preserving operator
 $$A_t:L^2(M,\rho\mu) \to L^2(M,\rho\mu),\, f \mapsto \int_0^t T_s^{\rho,V} f ds,$$ 
 applied to the nonnegative function $\hat V$. Moreover,  for $\alpha > 0$ we define $N_\alpha \in L^+(M)$ by
$$N_\alpha = \alpha G_\alpha^{\rho,V} 1 + G_\alpha^{\rho,V} \hat V.$$
 The following theorem  is  the main technical insight of this paper. It discusses properties of the functions $H_t$ and $N_\alpha$. In particular, it shows that $\hat V \in {\rm dom}(G_\alpha^{\rho,V})$ and $\hat V \in {\rm dom}(A_t)$  so that the functions $N_\alpha$ and $H_t$ are finite. We postpone its proof to Section~\ref{section:proof of the main theorems}.

\begin{theorem}\label{theorem:main result}
\begin{enumerate}[(a)]
                 \item For each $t > 0$ we have $H_t \in C^\infty(M)$. The function $H:(0,\infty) \times M \to \R, (t,x) \mapsto H_t(x)$ satisfies $0 \leq H \leq 1$ and belongs to $C^\infty((0,\infty) \times M) \cap C([0,\infty) \times M)$. It solves the  equation
                  $$
                                \begin{cases}
                                 (\partial_t - \cL) H = V /\rho \text{ on }(0,\infty) \times M,\\
                                 H_0 = 1.
                                \end{cases}
                 $$
                 Furthermore, $1 - H$ is the largest function $u \in  C^\infty((0,\infty) \times M) \cap C([0,\infty)\times M)$ which satisfies $u \leq 1$ and
                  $$
                 \begin{cases}
                (\partial_t - \cL) u \leq 0 \mbox{ on }(0,\infty) \times M,\\
                 u_0 = 0.
                                 \end{cases}
                 $$
                 \item For each $\alpha>0$ the function $N_\alpha$ belongs to $C^\infty(M)$, satisfies $0\leq N_\alpha \leq 1$ and solves 
                     $$(\alpha - \cL)N_\alpha = \alpha 1 +  V / \rho. 
                       $$
                     Furthermore, $1 - N_\alpha$ is the largest function $g \in  C^\infty(M)$ which satisfies $g \leq 1$  and   
                      $$
                     (\alpha-\cL) g \leq 0.
                     $$
                  \item For every $\alpha > 0$ and $x \in M$ we have
                  $$\int_0^\infty \alpha e^{-t\alpha} H_t(x) dt = N_\alpha(x).$$
\item The following dichotomy holds. Either $H_t(x) = 1 = N_\alpha (x)$ for all $\alpha,t > 0$ and all $x \in M$ or $H_t(x) < 1$ and $N_\alpha(x) < 1$ for all $\alpha,t > 0$ and all $x \in M$.
 %
                \end{enumerate}
\end{theorem}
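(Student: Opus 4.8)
The plan is to establish the four parts roughly in the order (a), (c), (b), (d), since (a) carries the analytic heart of the matter and the others can be bootstrapped from it by Laplace transform and maximum-principle arguments. For part (a), I would first work at the level of $L^2(M,\rho\mu)$, where the spectral calculus is available. Writing $u_t = 1 - T^{\rho,V}_t 1 - \int_0^t T^{\rho,V}_s \hat V\,ds$ (interpreted via the $L^+$-extensions), one checks formally, using $\frac{d}{dt}T^{\rho,V}_t = L_{\rho,V}T^{\rho,V}_t = \cL T^{\rho,V}_t$ and $\cL 1 = -\hat V$ (which holds since $\cL = \rho^{-1}\Delta - \rho^{-1}V$ and $\Delta 1 = 0$), that $\partial_t H_t = \cL T^{\rho,V}_t 1 + T^{\rho,V}_t \hat V = \cL(T^{\rho,V}_t 1 + \int_0^t T^{\rho,V}_s\hat V\,ds) + \cL 1 \cdot(-1)\cdots$; more cleanly, one verifies $(\partial_t - \cL)H_t = \hat V$ directly from $(\partial_t - \cL)T^{\rho,V}_t 1 = 0$ (wait — that is false since $\cL 1 \neq 0$), so instead one uses $(\partial_t - \cL)T^{\rho,V}_t 1 = -\cL 1 = \hat V - (\text{correction})$; the correct bookkeeping is that $\partial_t(\int_0^t T^{\rho,V}_s\hat V\,ds) = T^{\rho,V}_t\hat V$ and $\cL \int_0^t T^{\rho,V}_s\hat V\,ds = \int_0^t \partial_s T^{\rho,V}_s\hat V\,ds = T^{\rho,V}_t\hat V - \hat V$, which combine to give $(\partial_t-\cL)H_t = (\partial_t-\cL)T^{\rho,V}_t 1 + \hat V = \hat V$ once one knows $(\partial_t-\cL)T^{\rho,V}_t 1 = 0$ — and this last identity does hold as an $L^2$-statement because $T^{\rho,V}_t 1$ is computed via the $L^\infty$-extension of the semigroup, applied to the constant function which, although not in $L^2$, is handled by monotone approximation $1 = \lim 1_{K_n}$. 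The Markov property gives $0 \le T^{\rho,V}_t 1 \le 1$ and $\int_0^t T^{\rho,V}_s\hat V\,ds \ge 0$; the bound $H_t \le 1$ is the substantive inequality and I would get it from the comparison with the Laplacian semigroup ($\hat V \le \cdots$) or, more robustly, from the fact that $1$ itself is a supersolution: since $(\partial_t - \cL)1 = \hat V \ge 0 = (\partial_t-\cL)H_t - (\text{wait})$... the clean argument is that $N_\alpha \le 1$ at the resolvent level (Markovianity gives $\alpha G^{\rho,V}_\alpha 1 \le 1$ and $G^{\rho,V}_\alpha \hat V \le \alpha^{-1}\cdot$ via $\hat V = -\cL 1$ so $G^{\rho,V}_\alpha\hat V = G^{\rho,V}_\alpha(\alpha - \cL)1 - \alpha G^{\rho,V}_\alpha 1 = 1 - \alpha G^{\rho,V}_\alpha 1$, hence $N_\alpha = \alpha G^{\rho,V}_\alpha 1 + (1 - \alpha G^{\rho,V}_\alpha 1) = 1$ formally — which already shows $N_\alpha \le 1$ with equality modulo the subtlety that $1 \notin D(L_{\rho,V})$). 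Finiteness of $H_t$ and $N_\alpha$, i.e. $\hat V \in \mathrm{dom}(A_t) \cap \mathrm{dom}(G^{\rho,V}_\alpha)$, follows from exactly this computation: $G^{\rho,V}_\alpha\hat V \le 1 < \infty$, and similarly $\int_0^t T^{\rho,V}_s\hat V\,ds = 1 - T^{\rho,V}_t 1 \le 1$.

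Having the $L^2$/monotone-limit identities, I would invoke the local parabolic and elliptic regularity theory cited in Appendix~\ref{section:local regularity theory}: $H$ solves $(\partial_t-\cL)H = \hat V$ weakly with bounded right-hand side and bounded $H$, so $H \in C^\infty((0,\infty)\times M)$, and the continuity up to $t=0$ with $H_0 = 1$ follows from strong continuity of the $L^1$- (or weak-$*$ $L^\infty$-) semigroup together with local uniform estimates. The extremality statement — that $1-H$ is the \emph{largest} bounded subsolution of the heat equation with zero initial data and $u \le 1$ — is a parabolic maximum/comparison principle argument: given any such $u$, the function $w = u - (1-H) = u - 1 + H$ satisfies $(\partial_t - \cL)w \le \hat V - \hat V = 0$ (wait: $(\partial_t-\cL)(1-H) = -\hat V + \cdots$; since $(\partial_t-\cL)H = \hat V$ and $\cL 1 = -\hat V$ we get $(\partial_t-\cL)(1-H) = -\hat V - (-\hat V)\cdot$... recomputing: $(\partial_t-\cL)(1) = 0 - \cL 1 = \hat V$ and $(\partial_t-\cL)H = \hat V$, so $(\partial_t-\cL)(1-H) = 0$), hence $w$ is a bounded subsolution of the \emph{homogeneous} heat equation with $w_0 \le 0$, and one concludes $w \le 0$ from the known uniqueness/comparison for nonpositive bounded subsolutions of the heat equation associated with a Dirichlet form — equivalently, $1-H$ is the pointwise supremum of such $w$'s because the semigroup is the minimal one. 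Part (c) is then the Laplace transform identity $\alpha\int_0^\infty e^{-t\alpha}T^{\rho,V}_t\,dt = \alpha G^{\rho,V}_\alpha$ applied termwise (with Tonelli to justify the interchange of $\int_0^\infty$ with the inner $\int_0^t$ in the $\hat V$-term), giving $\int_0^\infty \alpha e^{-t\alpha}H_t\,dt = \alpha G^{\rho,V}_\alpha 1 + G^{\rho,V}_\alpha\hat V = N_\alpha$; this simultaneously transfers the regularity, the bound $0 \le N_\alpha \le 1$, and the extremality from (a) to (b) (the elliptic extremal statement for $1-N_\alpha$ being the Laplace transform of, or an independent elliptic-maximum-principle version of, the parabolic one).

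Finally, part (d) — the all-or-nothing dichotomy — is a connectedness/strong-maximum-principle argument. Suppose $H_{t_0}(x_0) = 1$ for some $t_0 > 0$, $x_0 \in M$. The function $v = 1 - H$ is nonnegative, satisfies the homogeneous heat equation $(\partial_t - \cL)v = 0$, and $v(t_0,x_0) = 0$, i.e. $v$ attains its minimum value $0$ at an interior spacetime point; by the parabolic strong minimum principle $v \equiv 0$ on $(0,t_0]\times M$ (the manifold being connected), and then by the forward uniqueness / monotonicity $t \mapsto H_t$ (which one reads off from the equation, or from $H_t \le H_{t'}$ type considerations, or from the representation and $V \ge 0$) one propagates $v \equiv 0$ to all $t > 0$; via (c) this forces $N_\alpha \equiv 1$ for all $\alpha$, and conversely if some $N_{\alpha_0}(x_0) = 1$ the elliptic strong maximum principle gives $N_{\alpha_0} \equiv 1$ and then the Laplace identity (c), being an equality of nonnegative quantities bounded by $1$ whose $\alpha$-weighted average is $1$, forces $H_t \equiv 1$ for a.e. $t$, hence all $t$ by continuity. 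I expect the main obstacle to be the careful handling of the constant function $1 \notin L^2(M,\rho\mu)$ throughout — making the formal identities $\cL 1 = -\hat V$, $(\partial_t - \cL)T^{\rho,V}_t 1 = 0$, and $N_\alpha = \alpha G^{\rho,V}_\alpha 1 + (1 - \alpha G^{\rho,V}_\alpha 1)$ rigorous requires the $L^+$-extension machinery of \cite{Kaj} and a monotone exhaustion argument, together with a clean bridge from these $L^2$/$L^+$ statements to the pointwise smooth statements via local regularity; the maximum-principle steps, while technical, are standard once the right function-space framework is in place.
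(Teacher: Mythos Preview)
Your approach diverges from the paper's at the structural level, and the divergence hides a genuine gap. The paper obtains the bound $H \leq 1$ and the maximality of $1-H$ \emph{simultaneously} by exhaustion through relatively compact $\Omega \subset M$: Duhamel's principle on $\Omega$ (resting on the parabolic maximum principle for paths in $W^1(\Omega,\rho+V)$, where everything is honestly in $L^2$) gives $1 - u_t \geq T_t^\Omega 1 + \int_0^t T_s^\Omega \hat V\,ds$ for any admissible subsolution $u$; letting $\Omega \nearrow M$ via Mosco convergence and then pushing compactly supported $e \nearrow 1$, $f \nearrow \hat V$ yields $1 - u_t \geq H_t$. Taking $u = 0$ gives $H \leq 1$ and finiteness; for general $u$ it gives maximality. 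Smoothness and the initial condition are then read off from the approximants $H^{e,f}_t = T_t e + \int_0^t T_s f\,ds$ together with parabolic hypoellipticity. Your formal identities, by contrast, overshoot: the computation $G_\alpha^{\rho,V}\hat V = G_\alpha^{\rho,V}(\alpha - \cL)1 - \alpha G_\alpha^{\rho,V}1$ would yield $N_\alpha = 1$ if $1$ were in $D(L_{\rho,V})$, but since it is not, the defect $1 - N_\alpha$ is precisely the (possibly nonzero) bounded $\alpha$-harmonic obstruction, and nothing in the formal manipulation tells you its sign. Your finiteness claim $\int_0^t T_s^{\rho,V}\hat V\,ds = 1 - T_t^{\rho,V}1$ is likewise the generalized conservation property itself and is false in general.

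The maximality step exhibits the same defect in sharper form. You reduce to the principle ``$w$ bounded, $(\partial_t - \cL)w \leq 0$, $w_0 = 0$ $\Rightarrow$ $w \leq 0$''. This is false on a noncompact $M$: $w = 1-H$ is itself a counterexample whenever $\Qr$ fails to be conservative in the generalized sense. The comparison you invoke does hold for paths in the form domain (that is exactly the paper's parabolic maximum principle), but a merely bounded smooth function on $M$ need not lie in $W^1(M,\rho+V)$. What rescues the argument is precisely the compact-exhaustion step you are trying to bypass: on each $\Omega$ the relevant functions \emph{are} in the form domain, so the comparison holds there, and the passage $\Omega \nearrow M$ produces an \emph{inequality} (not an equality), namely $H \leq 1 - u$. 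Your appeal to ``minimality of the semigroup'' is the right intuition, but its content \emph{is} the compact-exhaustion argument rather than a shortcut around it. (Your handling of (c) is fine and matches the paper's; for (d) the paper argues via the elliptic strong maximum principle for $1 - N_\alpha$ in local coordinates and then transfers to $H$ via (c), whereas you go through the parabolic strong minimum principle for $1-H$ --- either route works.)
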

\begin{remark}\label{remark:measurability H}
As is standard, (a) and (b) mean that $H_t$ respectively $N_\alpha$ have smooth versions. In what follows we always work with those versions. Only assertion (d) uses that $M$ is assumed to be connected.
%
\end{remark}

With these properties of $H$ and $N_\alpha$ at hand, we can now introduce the generalized conservation property, the main concept of this paper.

 \begin{definition}[Conservativeness in the generalized sense]
 The Dirichlet form $Q_{\rho,V}$ (respectively the semigroup $(\Tr)_{t > 0}$) is called {\em conservative in the generalized sense} if $H_t = 1$ for all $t>0$.
\end{definition}
 \begin{remark}
  For $V = 0$ conservativeness and  conservativeness in the generalized sense of $\Tr$ coincide. The quantity $H_t$ has an interpretation in terms of the heat flow. If we study semigroup solutions to the heat equation (with respect to $\cL$) with initial value $1$, which corresponds to a uniform initial heat distribution, then $T^{\rho,V}_t 1$ is the density of the total amount of heat in the system at time $t$. It  can decrease over time for two reasons.  Either heat is transported to the boundary of $M$ (which can be thought of laying at infinity) or heat is lost inside of $M$ due to the presence of a potential, which drains heat from the system. The amount of heat lost by the latter effect can (heuristically) be computed as 
  $$ \int_0^t T^{\rho,V}_s \hat V ds,$$
 cf. the discussion in \cite[Section~8]{KL2}, which treats the same phenomenon for Dirichlet forms on graphs. Hence, the amount of heat transported to the boundary at infinity is $1 - H_t$, so that   $H_t = 1$ for all $t >0$ if and only if no heat is transported to the boundary at infinity. This generalized conservation property was first introduced in \cite{KL} for Dirichlet forms on graphs. Due to the previously described interpretation for $H_t = 1$,  Dirichlet forms which are conservative in the generalized sense are called stochastically complete at infinity in \cite{KL}. In our terminology stochastic completeness is a property of the weighted manifold as a geometric object viz. the conservativeness of its canonical Dirichlet form. In contrast, we think of conservativeness (in the generalized sense) as a property of abstract Markovian semigroups, where $V$ and $\rho$ appear as an additional non-geometric input. This is why we do not use the term stochastic completeness at infinity.
 \end{remark}

 %
%

 %

The main results of this paper are the following characterizations of the generalized conservation criterion. As mentioned above, they are extensions of the classical characterization of stochastic completeness by Khasminskii \cite{Kas}, which treats the case $\rho = \Psi = 1$ and $V = 0$.

\begin{theorem} \label{theorem:non conservative}
 The following assertions are equivalent.
\begin{itemize}
 \item[(i)]  The function $1-H$ is nontrivial. 
 \item[(ii)] For  some/any $\alpha > 0$ the function $1-N_\alpha$ is nontrivial. 
 \item[(iii)] For some/any $\alpha > 0$ there exists a nontrivial bounded $g \in C^\infty(M)$ with 
 		$$(\alpha - \cL)g = 0.$$
 \item[(iv)]  For some/any $\alpha > 0$ there exists a nontrivial  nonnegative bounded $g \in C^\infty(M)$ with 
 		$$(\alpha - \cL)g \leq 0.$$
 \item[(v)] There exists a nontrivial  $u \in C^\infty((0,\infty) \times M) \cap C_b([0,\infty) \times M)$ that satisfies
   $$
                 \begin{cases}
                (\partial_t - \cL) u = 0 \mbox{ on } (0,\infty) \times M,\\
                     u_0 = 0.
                                 \end{cases}
             $$
 \item[(vi)] There exists a nontrivial nonnegative   $u \in C^\infty((0,\infty) \times M) \cap C_b([0,\infty) \times M)$ that satisfies 
  $$
                 \begin{cases}
                (\partial_t - \cL) u\leq 0 \mbox{ on } (0,\infty) \times M,\\
                     u_0 = 0.
                                 \end{cases}
             $$
 \end{itemize}
\end{theorem}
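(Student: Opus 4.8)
The plan is to establish the cycle of implications by exploiting the structure already provided in Theorem~\ref{theorem:main result}. The key observation is that parts (a) and (b) of that theorem already identify $1-H$ and $1-N_\alpha$ as \emph{maximal} solutions to the relevant differential inequalities, and part (c) links them via the Laplace transform; so most of the work is to connect these extremal objects to the existence of \emph{arbitrary} nontrivial solutions.

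First I would handle the equivalence of (i) and (ii) using the Laplace transform identity in Theorem~\ref{theorem:main result}(c): since $0 \le H_t \le 1$ and $t \mapsto H_t(x)$ is continuous, the integral $\int_0^\infty \alpha e^{-t\alpha} H_t(x)\,dt$ equals $1$ for all $\alpha$ if and only if $H_t(x) = 1$ for all $t$; combined with the dichotomy in part (d), this gives that $1-H$ is trivial iff $1-N_\alpha$ is trivial for some (equivalently every) $\alpha$. The implications (i)$\Rightarrow$(v)$\Rightarrow$(vi) and (ii)$\Rightarrow$(iii)$\Rightarrow$(iv) are then essentially immediate: $1-H$ itself is a nontrivial nonnegative bounded smooth solution of the parabolic problem with zero initial value (by part (a)), and $1-N_\alpha$ is a nontrivial nonnegative bounded smooth solution of $(\alpha-\cL)g \le 0$ (in fact with equality-shifted data, but reading off part (b) gives $(\alpha-\cL)(1-N_\alpha) = -\hat V \le 0$), so these trivially witness the weaker existence statements. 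For (v)$\Rightarrow$(iii) I would apply the Laplace transform in the time variable to a bounded solution $u$ of the heat equation with $u_0 = 0$: setting $g = \int_0^\infty \alpha e^{-\alpha t} u_t\,dt$ formally solves $(\alpha - \cL)g = 0$, is bounded, and is nontrivial provided $u$ is; one must check, via local regularity, that differentiation under the integral and the identity $\cL g = \int_0^\infty \alpha e^{-\alpha t}\cL u_t\,dt$ are justified. Similarly (vi)$\Rightarrow$(iv) follows by the same Laplace-transform device, noting that the inequality is preserved under the positive integral kernel $\alpha e^{-\alpha t}$.

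The crux of the argument is closing the loop, i.e., showing that the \emph{existence} of any nontrivial bounded subsolution forces $1-H$ (or $1-N_\alpha$) to be nontrivial — in other words (iv)$\Rightarrow$(ii) and (vi)$\Rightarrow$(i). Here I would invoke the maximality statements in Theorem~\ref{theorem:main result}: if $g \in C^\infty(M)$ is nontrivial, nonnegative, bounded, and satisfies $(\alpha - \cL)g \le 0$, then after rescaling so that $g \le 1$ (multiply by $1/\|g\|_\infty$, using nonnegativity to preserve the sign of the inequality and $V \ge 0$... actually one must be slightly careful since $\cL$ is not the full Laplacian, but $(\alpha - \cL)(cg) = c(\alpha-\cL)g \le 0$ for $c > 0$), the maximality of $1-N_\alpha$ among such functions gives $cg \le 1 - N_\alpha$, whence $1-N_\alpha \ge cg$ is nontrivial. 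The parabolic case (vi)$\Rightarrow$(i) is identical using that $1-H$ is the largest $u \le 1$ with $(\partial_t - \cL)u \le 0$ and $u_0 = 0$: a nontrivial nonnegative bounded subsolution with zero initial data, suitably rescaled, lies below $1-H$.

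I expect the main obstacle to be the Laplace-transform steps (v)$\Rightarrow$(iii) and (vi)$\Rightarrow$(iv): one needs to ensure the transformed function $g$ is genuinely nontrivial (a nontrivial bounded $u$ with $u_0 = 0$ could a priori have vanishing Laplace transform only if $u \equiv 0$ by injectivity of the Laplace transform in $t$ for continuous bounded functions, so this is fine but should be stated), that $g$ is smooth and the equation $(\alpha-\cL)g = 0$ holds — this requires passing the operator $\cL$ through the improper integral, which is where the local elliptic and parabolic regularity theory from the Appendix, together with dominated convergence using the uniform bound on $u$, does the job. A secondary subtlety is the direction of inequalities when rescaling in the subsolution implications: since we only assume $V \ge 0$ and multiply by positive constants, the inequalities $(\alpha-\cL)g \le 0$ and $(\partial_t-\cL)u \le 0$ scale correctly, but one should remark on why one may assume $\|g\|_\infty \le 1$ resp. $\sup u \le 1$ without loss of generality before quoting maximality.
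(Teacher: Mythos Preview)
Your logical scheme has a genuine gap: you establish (i)$\Leftrightarrow$(ii)$\Leftrightarrow$(iv)$\Leftrightarrow$(vi), and you show (i)$\Rightarrow$(v), (ii)$\Rightarrow$(iii), (v)$\Rightarrow$(iii), (vi)$\Rightarrow$(iv), but nowhere do you prove that (iii) or (v) implies anything back in the main cluster. The Laplace-transform step (v)$\Rightarrow$(iii) just moves you from one dead end to another, and your closing argument via maximality only handles (iv)$\Rightarrow$(ii) and (vi)$\Rightarrow$(i), which require the subsolution to be \emph{nonnegative}. Assertions (iii) and (v) carry no sign hypothesis, so you cannot feed them directly into the maximality statement.

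The paper closes the loop more simply and avoids the Laplace-transform detour altogether. For (v)$\Rightarrow$(i): given a nontrivial bounded $u$ with $(\partial_t - \cL)u = 0$ and $u_0 = 0$, rescale so that $|u| \le 1$; then \emph{both} $u$ and $-u$ satisfy the hypotheses of the maximality statement in Theorem~\ref{theorem:main result}(a), whence $u \le 1-H$ and $-u \le 1-H$, so $|u| \le 1-H$ and $1-H$ is nontrivial. The same $\pm g$ trick gives (iii)$\Rightarrow$(ii) via Theorem~\ref{theorem:main result}(b). You already deploy exactly this maximality-plus-rescaling argument for (vi)$\Rightarrow$(i) and (iv)$\Rightarrow$(ii); the only missing observation is that for the \emph{equation} (as opposed to the inequality) the nonnegativity hypothesis can be dropped by applying maximality to both signs. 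Once you add this, the Laplace-transform steps become redundant, and with them the delicate justification of passing $\cL$ through the improper time integral.

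A minor computational slip: $(\alpha - \cL)(1-N_\alpha) = 0$, not $-\hat V$, since $(\alpha-\cL)1 = \alpha + \hat V$ and $(\alpha-\cL)N_\alpha = \alpha + \hat V$ by Theorem~\ref{theorem:main result}(b). This does not affect your (ii)$\Rightarrow$(iii),(iv) step, as $1-N_\alpha$ still witnesses both.
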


 \begin{proof}  (i) $\Leftrightarrow$ (ii):  We use the identity
 $$ \int_0^\infty e^{-t\alpha} H_t\, dt = G_\alpha 1 + \frac{1}{\alpha} G_\alpha \hat V. $$
 It shows that $H = 1$ implies $N_\alpha = 1$ for any $\alpha > 0$. Hence, the nontriviality of $1- N_\alpha$ for one $\alpha> 0$ shows the nontriviality of $1-H$.
 
 For the other implication note that by  Theorem~\ref{theorem:main result} the function $H$ is smooth and satisfies $H \leq 1$. This implies that if $H \neq 1$, then there exist $0 < s < t$ and $\Omega \subseteq M$ open such that $H \leq C < 1$ on  $(s,t) \times \Omega$. By the previous equation we obtain that $N_\alpha < 1$ on $\Omega$ for any $\alpha > 0$. Thus, the nontriviality of $1-H$ implies that for any $\alpha >0$ the function $1-N_\alpha$ is nontrivial. 
 
 (i) $\Rightarrow$ (v) and (i) $\Rightarrow$ (vi): By  Theorem~\ref{theorem:main result} the function $1-H$ has the desired properties.
 
 (v)  $\Rightarrow$ (i): Let $u \in C^\infty((0,\infty) \times M) \cap C_b([0,\infty) \times M)$ nontrivial with
   $$
                 \begin{cases}
                (\partial_t - \cL) u = 0 &\mbox{on } (0,\infty) \times M,\\
                     u_0 = 0.
                                 \end{cases}
             $$
             Without loss of generality we can assume $|u| \leq 1$. Theorem~\ref{theorem:main result}~(a) applied to $u$ and $-u$ yields $u \leq 1- H$ and $-u \leq 1-H$ so that $|u| \leq 1-H$. Since $u$ is nontrivial, this implies that $1-H$ is nontrivial.
             
  (vi)$\Rightarrow$ (i): Let   $u \in C^\infty((0,\infty) \times M) \cap C_b([0,\infty) \times M)$ nontrivial and nonnegative  with
  $$
                 \begin{cases}
                (\partial_t - \cL) u\leq 0 &\mbox{on } (0,\infty) \times M,\\
                     u_0 = 0.
                                 \end{cases}
             $$
      Without loss of generality we can assume $u \leq 1$. Theorem~\ref{theorem:main result}~(a) shows $u\leq 1-H$. Since $u$ is nonnegative and nontrivial, this implies that $1-H$ is nontrivial.
      
      (ii) $\Leftrightarrow$ (iii) $\Leftrightarrow$ (iv): This can be proven along the same lines as the other implications using Theorem~\ref{theorem:main result}~(b) instead of Theorem~\ref{theorem:main result}~(a).
\end{proof}

For the sake of completeness we also mention the following characterization of the generalized conservation property. It is just the negation of the previous theorem. Recall that $\hat V = V / \rho$.
 
\begin{theorem} \label{theorem:conservative}
 The following assertions are equivalent.
 \begin{enumerate}[(i)]
  \item The Dirichlet form $\Qr$ on $L^2(M,\rho \mu)$ is conservative in the generalized sense, i.e., for all $t > 0$ we have 
  $$1 = T^{\rho,V}_t 1 + \int_0^t T^{\rho,V}_s \hat V\, ds.$$  
  \item For all $\alpha >0$ we have
  $$1 = \alpha G^{\rho,V}_\alpha 1 + G^{\rho,V}_\alpha \hat V.$$  
  \item For one/any $\alpha > 0$  any bounded $g \in C^\infty(M) $ with $(\alpha- \cL) g = 0$ satisfies $g = 0$.
  \item For one/any $\alpha > 0$  any nonnegative bounded $g \in C^\infty(M)$  with $(\alpha - \cL) g \leq 0$ satisfies $g = 0$. 
  \item Any solution $u \in C^\infty((0,\infty) \times M) \cap C_b([0,\infty) \times M)$ to the equation 
  $$
                 \begin{cases}
                (\partial_t - \cL) u = 0 \mbox{ on } (0,\infty) \times M,\\
                     u_0 = 0,
                                 \end{cases}
             $$
   satisfies $u = 0$.
  \item  Any nonnegative $u \in C^\infty((0,\infty) \times M) \cap C_b([0,\infty) \times M)$ that satisfies the differential inequality
 $$
                 \begin{cases}
                (\partial_t - \cL) u \leq 0 \mbox{ on } (0,\infty) \times M,\\
                     u_0 = 0,
                                 \end{cases}
             $$
             satisfies $u=0$.
 \end{enumerate}
\end{theorem}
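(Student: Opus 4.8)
The plan is to observe that Theorem~\ref{theorem:conservative} is nothing but the logical negation of Theorem~\ref{theorem:non conservative}: each assertion (i)--(vi) here is precisely the negation of the correspondingly numbered assertion there. Indeed, that $\Qr$ is conservative in the generalized sense means $H_t = 1$ for every $t > 0$, i.e.\ the function $1 - H$ is trivial, which negates (i) of Theorem~\ref{theorem:non conservative}; similarly $\alpha G^{\rho,V}_\alpha 1 + G^{\rho,V}_\alpha \hat V = 1$ for all $\alpha > 0$, i.e.\ $1 - N_\alpha$ trivial, negates (ii); ``every bounded $g \in C^\infty(M)$ with $(\alpha - \cL)g = 0$ vanishes'' negates (iii); ``every nonnegative bounded $g \in C^\infty(M)$ with $(\alpha - \cL) g \le 0$ vanishes'' negates (iv); and the last two items negate (v) and (vi). Since Theorem~\ref{theorem:non conservative} already establishes the mutual equivalence of its (i)--(vi), negating all of them yields the mutual equivalence of the present (i)--(vi), which is the assertion.

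The one point I would spell out is the ``one/any'' phrasing in items (iii) and (iv). The equivalences in Theorem~\ref{theorem:non conservative} include that the ``for some $\alpha > 0$'' and ``for every $\alpha > 0$'' versions of (ii)--(iv) there coincide --- this is ultimately where the dichotomy of Theorem~\ref{theorem:main result}(d), and hence connectedness of $M$, is used. Negating, the ``for one $\alpha > 0$'' and ``for every $\alpha > 0$'' versions of the present (ii)--(iv) likewise coincide, so the formulation is internally consistent. As a sanity check one can also see (i) $\Leftrightarrow$ (ii) directly: if $H_t = 1$ for all $t > 0$, then Theorem~\ref{theorem:main result}(c) gives $N_\alpha = \int_0^\infty \alpha e^{-t\alpha} H_t\, dt = 1$ for every $\alpha > 0$; conversely, if $N_\alpha(x) = 1$ for some $\alpha > 0$ and some $x \in M$, the dichotomy of Theorem~\ref{theorem:main result}(d) forces $H_t \equiv 1$ and $N_\beta \equiv 1$ for all $t, \beta > 0$.

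I do not expect any real obstacle here: all of the analytic content --- smoothness, the bounds $0 \le H, N_\alpha \le 1$, the maximality clauses, the Laplace transform identity and the dichotomy --- is supplied by Theorem~\ref{theorem:main result}, and the translation between the integrated semigroup/resolvent quantities and the differential (in)equalities is supplied by the proof of Theorem~\ref{theorem:non conservative}. The only thing to be careful about is lining up the six negations correctly. Should a self-contained argument be preferred over the one-line reduction, I would simply rerun the proof of Theorem~\ref{theorem:non conservative} with ``nontrivial'' replaced by ``identically zero'' everywhere, again invoking the maximality parts of Theorem~\ref{theorem:main result}(a) and (b) to pass from the differential (in)equalities back to $H = 1$, respectively $N_\alpha = 1$.
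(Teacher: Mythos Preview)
Your proposal is correct and matches the paper's approach exactly: the paper states that Theorem~\ref{theorem:conservative} ``is just the negation of the previous theorem'' and gives no further argument. Your write-up is in fact more detailed than the paper's, since you explicitly check the one/any phrasing via the dichotomy in Theorem~\ref{theorem:main result}(d).
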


\begin{remark}
\begin{enumerate}
 \item  Theorem~\ref{theorem:non conservative} can be seen as a generalization of \cite[Theorem~1]{KL} on graphs, which does not include our assertion (ii), to the manifold case. An abstract version of the equivalence of (ii), (iii) and (iv) in Theorem~\ref{theorem:conservative} is given by \cite[Theorem~4.68]{Schmi}, which treats all Dirichlet forms. There however only weak solutions are considered and the equivalence of (i), (v) and (vi) is missing. In contrast to the proofs in \cite{KL,Schmi}, which use elliptic maximum principles, our proof relies on a parabolic maximum principle.
 \item In this text we chose to work in the smooth category because   the input data $(M,g,\mu)$ and $\rho,V$ are assumed to be smooth. If they were not, we could not use elliptic and parabolic regularity theory as we do here. However, the previous theorem would still hold true with essentially the same proof but with $C^\infty$-solutions replaced by  weak solutions in the local form domain. 
\end{enumerate}
 \end{remark}

As a consequence to the previous characterizations  we obtain that conservativeness with vanishing potential implies conservativeness in the generalized sense for all nonnegative potentials. Moreover, conservativeness in the generalized sense is the same as conservativeness of a Dirichlet form with changed measure.  In terms of associated stochastic processes this corresponds to a time change.

 \begin{theorem}   \label{theorem:criteria for generalized conservation}
 \begin{enumerate}[(a)]
                  \item If $Q_{\rho,0}$ the is conservative, then $\Qr$ is conservative in the generalized sense.  In particular, if $(M,g,\mu)$ is stochastically complete, then $Q_{1,V}$ is conservative in the generalized sense.
                  \item The Dirichlet form $\Qr$ on $L^2(M,\rho\mu)$ is conservative in the generalized sense if and only if the Dirichlet form $Q_{\rho + V,0}$ on $L^2(M,(\rho + V) \mu)$ is conservative.

 \end{enumerate}
 
 \end{theorem}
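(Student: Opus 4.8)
For part (a), the plan is to use the characterization via differential inequalities. Suppose $Q_{\rho,0}$ is conservative. Let $u \in C^\infty((0,\infty)\times M)\cap C_b([0,\infty)\times M)$ be nonnegative with $(\partial_t - \cL)u \le 0$ and $u_0 = 0$; I want to conclude $u = 0$, which by Theorem~\ref{theorem:conservative}(vi) gives the generalized conservation property for $\Qr$. Write $\cL = \rho^{-1}\Delta - \rho^{-1}V$ and $\cL_0 = \rho^{-1}\Delta$ for the operator with vanishing potential. Since $V \ge 0$ and $u \ge 0$, we have $(\partial_t - \cL_0)u = (\partial_t - \cL)u - \rho^{-1}Vu \le -\rho^{-1}Vu \le 0$. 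So $u$ is a nonnegative bounded subsolution for the heat equation of the potential-free operator $\cL_0$ with zero initial value; conservativeness of $Q_{\rho,0}$ and Theorem~\ref{theorem:conservative}(vi) applied with $V = 0$ then force $u = 0$. The final sentence of (a) is the special case $\rho = 1$.

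For part (b), the key observation is an identity between the relevant objects for the two Dirichlet forms. The form $Q_{\rho+V,0}$ on $L^2(M,(\rho+V)\mu)$ has associated operator $\cL' = (\rho+V)^{-1}\Delta$, whereas $\Qr$ has $\cL = \rho^{-1}\Delta - \rho^{-1}V$. Note that $Q_{\rho,V}$ and $Q_{\rho+V,0}$ have the \emph{same} form domain $W^1_0(M,\rho+V)$ and differ only by the lower-order term $\int fg\,V\,d\mu$ versus the change of reference measure from $\rho\mu$ to $(\rho+V)\mu$. I would compare the two characterizations through the resolvent (Theorem~\ref{theorem:conservative}(ii)): $\Qr$ is conservative in the generalized sense iff $\alpha G_\alpha^{\rho,V}1 + G_\alpha^{\rho,V}\hat V = 1$ for some $\alpha>0$, i.e.\ $G_\alpha^{\rho,V}(\alpha 1 + \hat V) = 1$; while $Q_{\rho+V,0}$ is conservative iff $\alpha G_\alpha^{\rho+V,0}1 = 1$, i.e.\ $G_\alpha^{\rho+V,0}(\alpha 1) = 1$. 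Both amount to a statement about solving an elliptic equation, so I would rather use the strong-solution formulation (Theorem~\ref{theorem:conservative}(iv)): for $\Qr$, that every nonnegative bounded $g\in C^\infty(M)$ with $(\alpha-\cL)g\le 0$ vanishes; for $Q_{\rho+V,0}$, that every nonnegative bounded $g\in C^\infty(M)$ with $(\alpha-\cL')g\le 0$ vanishes. Now compute: $(\alpha - \cL)g = \alpha g - \rho^{-1}\Delta g + \rho^{-1}Vg = \rho^{-1}\big((\alpha\rho + V)g - \Delta g\big)$, while $(\alpha-\cL')g = \alpha g - (\rho+V)^{-1}\Delta g = (\rho+V)^{-1}\big(\alpha(\rho+V)g - \Delta g\big)$. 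These are not literally proportional; the matching happens at a \emph{different} spectral parameter only up to the potential term. The clean route is instead to fix $\alpha=1$ on one side and observe $(\ 1-\cL)g\le 0 \iff (\rho+V)g - \Delta g \le 0$ (using $\rho^{-1}>0$ and $\rho + V = \rho\cdot 1 + V$), while $(1-\cL')g \le 0 \iff (\rho+V)g - \Delta g \le 0$ as well. Hence the two differential inequalities, hence the two classes of admissible $g$, literally coincide, and Theorem~\ref{theorem:conservative}(iv) gives the equivalence.

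The main obstacle is the bookkeeping in part (b): one must check that the "some/any $\alpha$" clause is genuinely harmless, i.e.\ that after fixing $\alpha = 1$ the inequality $(1-\cL)g\le 0$ for $\Qr$ and $(1-\cL')g \le 0$ for $Q_{\rho+V,0}$ describe the same set of functions $g$, and that the spaces $C^\infty(M)$, boundedness and nonnegativity conditions are identical on both sides (they are, since $L^\infty$ and $C^\infty$ do not see the smooth positive density change, and the underlying manifold is unchanged). One should also double-check that $Q_{\rho+V,0}$ indeed satisfies the standing hypotheses needed to apply Theorem~\ref{theorem:conservative} --- namely that $\rho+V$ is a strictly positive smooth density and $(\rho+V)\mu$ has a smooth strictly positive density against $\vo$, which is immediate since $\rho>0$, $V\ge 0$ smooth and $\mu$ already has such a density. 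Everything else is a direct translation through the characterizations already established.
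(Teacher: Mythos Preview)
Your proof is correct and follows essentially the same approach as the paper. The only cosmetic differences are in which characterization from Theorem~\ref{theorem:conservative} is invoked: the paper uses the elliptic inequality (iv) for part~(a) where you use the parabolic one (vi), and the equation characterization (iii) for part~(b) where you use the inequality (iv); the underlying algebra --- dropping the nonnegative term $\rho^{-1}Vu$ in (a), and observing that $(1-\cL)g$ and $(1-\cL')g$ share the sign of $(\rho+V)g-\Delta g$ in (b) --- is identical.
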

 
 \begin{proof}
 (a): Let $\alpha > 0$ and let $u \in C^\infty(M)$ nonnegative and bounded with $$(\alpha - \rho^{-1} \Delta + \rho^{-1}V) u =  (\alpha - \cL) u \leq 0.$$ According to  Theorem~\ref{theorem:conservative} it suffices to prove $u = 0$. Since $\rho^{-1}V$ and $u$ are nonnegative, the assumptions on $u$ imply $(\alpha - \mathcal{L}_{\rho,0}) u = (\alpha - \rho^{-1} \Delta)u \leq 0$.  The conservativeness of $Q_{\rho,0}$ yields $u = 0$ by Theorem~\ref{theorem:conservative}. 
 
  (b): Let $u \in C^\infty(M)$ be a solution to the equation $(1 - \cL) u = 0$. By definition of $\cL$  and since $\rho > 0$, this is equivalent to $\rho u + V u = \Delta u$. This in turn holds if and only if $\mathcal{L}_{\rho + V,0} u = (\rho + V)^{-1} \Delta u = u$.  Therefore, $(1 - \cL) u = 0$ has  a nontrivial bounded solution in $C^\infty(M)$ if and only if $(1 -  \mathcal{L}_{\rho + V,0})u = 0$  has  a nontrivial bounded solution in $C^\infty(M)$. With this at hand, the claim follows from Theorem~\ref{theorem:non conservative}.
 \end{proof}
 
\begin{remark}
 For graphs assertion (a) is contained in \cite{KL}. Assertion (b) seems to be a new observation. Since conservativeness of Dirichlet forms is quite well-understood, it opens the way to studying the generalized conservation property. In  Subsection~\ref{subsection:model manifolds} we use this strategy to provide a characterization of the generalized conservation property on model manifolds in terms of   volume growth.  In Subsection~\ref{subsection:large potentials} we employ known volume growth criteria for the conservativeness of $Q_{\rho + V,0}$ to obtain that on any complete manifold there is a potential such that  $Q_{1+ V,0}$ is consevative in the generalized sense.
\end{remark}

\section{Maximum principles}\label{section:maximum principles}

In this section we discuss a parabolic and an elliptic maximum principle for the operator $\cL$. Both are used in the proof of the main results. The proofs that we give apply to more general situations and so they may be of independent interest.

We let $\Qt$ the Dirichlet form on $L^2(M,\rho \mu)$ with domain $D(\Qt) = W^1(M,\rho + V)$, on which it acts by
$$\Qt(f,g) = \int_M \as{\nabla f,\nabla g} d \mu + \int_M Vfg d\mu.$$
With this notation we have $\|f\|_{W^1}^2 = \Qt(f)  + \|f\|_2^2$ for $f \in W^1(M,\rho+V)$. It follows from   the definition of the distributional operator $\cL$ that  
$$\Qt(f,\varphi) = \as{-\cL f,\varphi}_{\rho \mu}, \quad f \in D(\Qt),\varphi \in C^\infty_c(M).$$
In particular, the associated self-adjoint operator is a restriction of $\cL$. The following observation lies at the heart of all maximum principles in this section.
\begin{lemma}\label{lemma:order ideal}
The space $W^1_0(M,\rho + V)$ is an order ideal in $W^1(M,\rho + V)$, i.e., for $f \in W^1_0(M,\rho + V),$ $g \in  W^1(M,\rho + V)$ the inequality $|g| \leq |f|$ implies $g \in W^1_0(M,\rho + V)$.
\end{lemma}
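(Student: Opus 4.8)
The plan is to reduce the statement to a known structural fact about Dirichlet forms, namely that $W^1_0(M,\rho+V)$ is the form domain of a Dirichlet form on $L^2(M,\rho\mu)$, and that such form domains are closed under normal contractions in a way that yields the ideal property. Concretely, let $f \in W^1_0(M,\rho+V)$ and $g \in W^1(M,\rho+V)$ with $|g| \le |f|$; I want to produce $g$ as a limit in the $\|\cdot\|_{W_1}$-norm of compactly supported smooth functions. First I would reduce to the case $g \ge 0$ by treating $g_+$ and $g_-$ separately (both lie in $W^1(M,\rho+V)$ since the form is a Dirichlet form and the underlying form $\Qt$ is regular enough for truncations, and $|g_\pm| \le |f|$); then it suffices to show $g \in W^1_0(M,\rho+V)$ for $0 \le g \le |f|$.

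The key step is an approximation argument. Since $f \in W^1_0(M,\rho+V)$, choose $\varphi_n \in C_c^\infty(M)$ with $\varphi_n \to f$ in $\|\cdot\|_{W_1}$. Replacing $\varphi_n$ by $|\varphi_n| \wedge$ (a suitable normal contraction) one may assume $\varphi_n \ge 0$ and $\varphi_n \to |f|$ in $W^1$, using that normal contractions are continuous on the Dirichlet form and that $f \mapsto |f|$ is such a contraction. Now consider $g_n := g \wedge \varphi_n$. Each $g_n$ lies in $W^1(M,\rho+V)$ (a lattice operation between two $W^1$-functions), it is supported in $\supp \varphi_n$ which is compact, and it is bounded; one checks $g_n \to g \wedge |f| = g$ in $L^2(M,\rho\mu)$ and that $\nabla g_n = \1_{\{g \le \varphi_n\}}\nabla g + \1_{\{g > \varphi_n\}}\nabla \varphi_n$ converges to $\nabla g$ in $\vec L^2(M,\mu)$ — the latter by dominated convergence once one knows $\{g > \varphi_n\}$ shrinks (up to measure zero) and $|\nabla \varphi_n|$ stays $L^2$-convergent. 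This shows $g$ lies in the $W^1$-closure of the space of compactly supported bounded $W^1$-functions. Finally, a standard cutoff-and-mollify argument (or directly invoking that $W^1_0(M,\rho+V)$ contains every compactly supported function of $W^1(M,\rho+V)$, which follows from local regularity of the manifold and the fact that on a relatively compact chart $\rho+V$ is comparable to $1$) upgrades each $g_n$ to an element of $W^1_0(M,\rho+V)$, whence $g \in W^1_0(M,\rho+V)$.

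I expect the main obstacle to be the careful bookkeeping in the gradient convergence $\nabla(g \wedge \varphi_n) \to \nabla g$: one must control the gradient on the "bad set" $\{g > \varphi_n\}$, which requires knowing $\varphi_n \to |f| \ge g$ not just in $L^2$ but strongly enough in $W^1$ that the contributions of $\1_{\{g > \varphi_n\}}\nabla\varphi_n$ and $\1_{\{g > \varphi_n\}}\nabla g$ both vanish in $L^2(M,\mu)$. This is a routine but delicate use of the locality of the gradient on lattice operations together with dominated convergence, passing to a subsequence so that $\varphi_n \to |f|$ pointwise $\mu$-a.e. A clean alternative, which I would mention, is to invoke that $(\Qt, W^1(M,\rho+V))$ and its restriction to $W^1_0(M,\rho+V)$ differ by an extension of a regular Dirichlet form, and that the order-ideal property of $W^1_0$ inside $W^1$ is exactly the statement that $W^1_0(M,\rho+V)$ is, up to the weight, the maximal Markovian (Dirichlet) extension's domain intersected with the ideal structure — but since the paper wants a self-contained smooth-category treatment, the explicit approximation above is the version I would write out.
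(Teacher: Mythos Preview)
Your setup---reduce to $0 \le g \le |f|$, take $\varphi_n \in C_c^\infty(M)$ approximating $f$ in $\|\cdot\|_{W^1}$, and truncate via $g_n = g \wedge |\varphi_n|$ to get compactly supported $W^1$-functions---matches the paper exactly. The divergence is in how you pass from the $g_n$ back to $g$.

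You try to prove $g_n \to g$ \emph{strongly} in $W^1$ by a pointwise analysis of $\nabla(g \wedge \varphi_n)$. Your heuristic that ``$\{g > \varphi_n\}$ shrinks (up to measure zero)'' is not correct as stated: on the set $\{g = |f|\}$ one may have $\varphi_n < |f| = g$ for all $n$ (e.g.\ if the approximants approach from below), so the bad set need not shrink at all. You do mention locality of the gradient, and indeed that is exactly what rescues the argument---on $\{g = |f|\}$ one has $\nabla g = \nabla |f|$ a.e., so the contribution there is controlled by $\|\nabla \varphi_n - \nabla |f|\|_{L^2}$, and on $\{g < |f|\}$ the bad set does shrink a.e.\ along a subsequence. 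So your argument can be completed, but it needs this extra splitting and the (concrete, manifold-specific) fact that $|\varphi_n| \to |f|$ in $W^1$.

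The paper sidesteps all of this. It only shows that $(g_n)$ is \emph{bounded} in $W^1$---a two-line estimate using the Dirichlet contraction property $\Qt(|h|)^{1/2} \le \Qt(h)^{1/2}$---and that $g_n \to g$ in $L^2(M,\rho\mu)$. Then Banach--Saks gives a subsequence whose Ces\`aro means converge strongly in the Hilbert space $(W^1_0,\|\cdot\|_{W^1})$; the $L^2$-limit identifies the target as $g$. This argument never touches pointwise behaviour of gradients, needs no locality, and works verbatim for abstract Dirichlet forms (as the paper remarks). Your approach buys an explicit approximating sequence converging strongly; the paper's buys robustness and brevity.
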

\begin{proof}
 Let $f,g$ as stated. Since $g_+,g_-  \in  W^1(M,\rho + V)$, $|f| \in W^1_0(M,\rho + V)$ (here we use that $\Qt$ and $Q_{\rho,V}$ are Dirichlet forms and $x \mapsto x \vee 0$, $y \mapsto |y|$ are normal contractions) and $|g_+|,|g_-| \leq |f|$, it suffices to consider the case $0 \leq g \leq f$. By definition of $W^1_0(M,\rho + V)$ there exists a sequence $(\varphi_n)$ in $C_c^\infty(M)$ with $\|f - \varphi_n\|_{W^1} \to 0$. Consider the functions 
 $$g_n :=  g \wedge |\varphi_n| = \frac{1}{2} ( g + |\varphi_n| - |g - |\varphi_n||).$$
 Since $0\leq  g \leq f$, the $(g_n)$ converge to $g$ in $L^2(M,\rho \mu)$. Moreover, $\Qt$ is a Dirichlet form and $\R \to \R, x\mapsto |x|$ is a normal contraction. Therefore, 
 \begin{align*}
   \Qt(g_n)^{1/2} &= \frac{1}{2} \Qt( g + |\varphi_n| - |g - |\varphi_n||)^{1/2} \\
   &\leq  \frac{1}{2} \Qt(g + |\varphi_n|)^{1/2} +   \frac{1}{2}\Qt(|g - |\varphi_n||)^{1/2} \\
   &\leq \Qt(g)^{1/2} + \Qt(\varphi_n)^{1/2}.
 \end{align*}
 This implies that $(g_n)$ is a bounded sequence in $W^1(M,\rho + V)$. By construction the $g_n$ have compact support. It follows with the same arguments as in \cite[Lemma~5.5]{Gri} that functions in $W^1(M,\rho + V)$ with compact support belong to  $W_0^1(M,\rho + V)$  (only the case $\rho = 1,V= 0$ is considered in \cite{Gri} but the argument is more general).  Hence $(g_n)$ is a bounded sequence in the Hilbert space $(W_0^1(M,\rho + V),\|\cdot\|_{W^1})$. The Banach-Saks theorem implies that it has a subsequence $(g_{n_k})$ such that its sequence of Cesàro means $\tilde g_N = N^{-1} \sum_{k = 1}^N g_{n_k}$ converges   to some $h \in W_0^1(M,\rho + V)$ with respect to $\|\cdot\|_{W^1}$. In particular, $\tilde{g}_N \to h$ in $L^2(M,\rho \mu)$. However, since $g_n \to g$ in $L^2(M,\rho \mu)$, we also have $\tilde g_N \to g$ in $L^2(M,\rho \mu)$, such that $g = h \in W_0^1(M,\rho + V)$. This proves the claim.
\end{proof}
\begin{remark}
The previous lemma says that the domain of the Dirichlet form $Q_{\rho,V}$ is an order ideal the domain of the Dirichlet form $\Qt$. According to \cite[Lemma~2.2]{Schmi2} this is equivalent to $D(\Qr) \cap L^\infty(M)$ being an algebraic ideal in $D(\Qt) \cap L^\infty(M)$. Form extensions of $\Qr$ with this property are called Silverstein extensions in the literature; hence $\Qt$ is a Silverstein extension of $\Qr$.
\end{remark}

%

We say that a function $f \in  W^1(M,\rho+V)$ satisfies 
$$f \geq 0 \mod W^1_0(M,\rho+V)$$
if there exists $g \in W^1_0(M,\rho+V)$ such that $f + g \geq 0$. For our purposes this is an adequate form of saying that $f$ is nonnegative on 'the boundary' of $M$. The following lemma characterizes when this inequality holds. It can be proven along the same lines as \cite[Lemma~5.12]{Gri} but we give an alternative proof that is based on the fact that  $W^1_0(M,\rho + V)$ is an order ideal in $W^1(M,\rho+V)$.
\begin{lemma} \label{lemma:boundary inequality}
A function   $f \in  W^1(M,\rho+V)$ satisfies $f \geq 0 \mod W^1_0(M,\rho+V)$ if and only if $f_- \in  W^1_0(M,\rho+V)$.
\end{lemma}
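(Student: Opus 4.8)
The statement is an ``if and only if'', so I would prove the two implications separately. The easy direction is ``$f_- \in W^1_0(M,\rho+V) \Rightarrow f \geq 0 \mod W^1_0(M,\rho+V)$'': simply take $g = f_-$, noting that $f + f_- = f_+ \geq 0$ pointwise. This requires only that $f_- \in W^1(M,\rho+V)$ whenever $f$ is, which follows since $x \mapsto -(x \wedge 0) = (-x) \vee 0$ is a normal contraction and $\Qt$ is a Dirichlet form, so the hypothesis $f_- \in W^1_0(M,\rho+V)$ is the only extra input needed.

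\textbf{The main direction.} Assume $f \geq 0 \mod W^1_0(M,\rho+V)$, i.e.\ there is $g \in W^1_0(M,\rho+V)$ with $f + g \geq 0$. The goal is to conclude $f_- \in W^1_0(M,\rho+V)$. The plan is to exhibit a function in $W^1_0(M,\rho+V)$ that dominates $f_-$ in absolute value and then invoke the order ideal property (Lemma~\ref{lemma:order ideal}). From $f + g \geq 0$ we get $f \geq -g$, hence $f \geq -g_+ \geq -|g|$, which gives $f_- = (-f) \vee 0 \leq g_+ \leq |g|$ pointwise $\mu$-a.e.; also trivially $f_- \geq 0$, so $|f_-| = f_- \leq |g|$. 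Now $g \in W^1_0(M,\rho+V) \subseteq W^1(M,\rho+V)$, and $f_- \in W^1(M,\rho+V)$ as noted above. Applying Lemma~\ref{lemma:order ideal} with the roles ``$f$'' $=g$ and ``$g$'' $=f_-$ (the hypothesis $|f_-| \leq |g|$ is exactly what is required) yields $f_- \in W^1_0(M,\rho+V)$, as desired.

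\textbf{Main obstacle.} There is no serious obstacle here once Lemma~\ref{lemma:order ideal} is in hand — the proof is essentially a two-line manipulation of positive and negative parts plus one application of the order ideal property. The only points requiring a word of care are: (1) confirming that $W^1(M,\rho+V)$ is stable under the maps $x \mapsto x_{\pm}$, which is immediate from the Dirichlet form property of $\Qt$; and (2) getting the pointwise inequality $f_- \leq |g|$ correct, which comes from $f \geq -g \geq -|g|$. The conceptual work has already been done in establishing that $W^1_0(M,\rho+V)$ is an order ideal; this lemma is just a convenient reformulation, which is precisely why the alternative proof promised in the text is shorter than the direct argument modeled on \cite[Lemma~5.12]{Gri}.
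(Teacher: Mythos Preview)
Your proof is correct and follows essentially the same route as the paper: the easy direction takes $g=f_-$, and for the main direction you bound $0\le f_-\le |g|$ and invoke Lemma~\ref{lemma:order ideal}. The only cosmetic difference is that the paper first replaces $g$ by $|g|\in W^1_0(M,\rho+V)$ to reduce to $g\ge 0$ before writing $0\le f_-\le g$, whereas you reach $f_-\le g_+\le |g|$ directly; the substance is identical.
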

\begin{proof}
It follows  from the definition that $f_- \in  W^1_0(M,\rho+V)$ implies 
$$f \geq 0 \mod  W^1_0(M,\rho+V).$$  
Now assume that there exists some $g \in  W^1_0(M,\rho+V)$ such that $f + g \geq 0$.  We  have $|g| \in  W^1_0(M,\rho+V)$ and so we can assume $g \geq 0$.  This implies $0 \leq f_- \leq g$. Since $f_- \in W^1(M, \rho + V)$, the claim follows from Lemma~\ref{lemma:order ideal}.
\end{proof}

The following maximum principle is an extension of \cite[Theorem~5.16]{Gri} to the case when $V \neq 0$.
\begin{theorem}[Parabolic maximum principle] \label{theorem:parabolic maximum principle}
 Let $0 < T \leq \infty$ and let  $v:(0,T) \to  W^1(M,\rho + V)$ be a path with the following properties.
 \begin{itemize}
  \item $\partial_t v$ exists as a strong limit in $L^2(M,\rho\mu)$,
  \item $v(t)_- \to 0$ in $L^2(M,\rho\mu)$, as $t \to 0+$,
  \item for every $0 < t < T$, $v(t) \geq 0 \mod W_0^1(M,\rho + V)$,
  \item for every $0< s < T$, $(\partial_t v)(s) \geq \cL (v(s))$ in the sense of $\dmp$. 
 \end{itemize}
Then $v \geq 0$ on $(0,T) \times M$.
\end{theorem}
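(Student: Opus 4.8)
The plan is to run the standard Dirichlet-form energy estimate for the heat equation, testing against the negative part of $v(t)$, and to exploit that $W^1_0(M,\rho+V)$ is an order ideal in $W^1(M,\rho+V)$ (Lemma~\ref{lemma:order ideal}, Lemma~\ref{lemma:boundary inequality}) so that $v(t)_-$ is an admissible test function in the form domain $D(Q_{\rho,V})$. First I would fix $0 < T' < T$, work on $(0,T')$, and consider the function $\phi(t) = \frac{1}{2}\|v(t)_-\|_{L^2(M,\rho\mu)}^2$. Using that $t \mapsto v(t)$ is differentiable in $L^2(M,\rho\mu)$ with derivative $\partial_t v$, one shows that $\phi$ is differentiable (for a.e.\ $t$, or in the weak sense) with $\phi'(t) = -\as{\partial_t v(t), v(t)_-}_{\rho\mu}$; this is the usual chain rule for the $L^2$-norm of the negative part, and the sign comes because increasing $v$ decreases $v_-$.

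Next I would bound $\phi'(t)$ from above. By Lemma~\ref{lemma:boundary inequality}, the hypothesis $v(t) \geq 0 \bmod W^1_0(M,\rho+V)$ gives $v(t)_- \in W^1_0(M,\rho+V) = D(Q_{\rho,V})$; it is also bounded in $W^1(M,\rho+V)$-norm on compact time intervals since $v$ maps into $W^1(M,\rho+V)$. The differential inequality $(\partial_t v)(s) \geq \cL v(s)$ in $\dmp$, paired against the nonnegative test function $v(s)_- \in C_c^\infty$-closure, together with the integration-by-parts identity $\Qr(f,\varphi) = \as{-\cL f,\varphi}_{\rho\mu}$ extended to $f \in W^1(M,\rho+V)$, $\varphi \in W^1_0(M,\rho+V)$ via the Silverstein-extension identity $\Qt(f,\varphi) = \as{-\cL f,\varphi}_{\rho\mu}$, yields
\[
\as{\partial_t v(s) - \cL v(s), v(s)_-}_{\rho\mu} \geq 0,
\]
hence $\as{\partial_t v(s), v(s)_-}_{\rho\mu} \geq \as{\cL v(s), v(s)_-}_{\rho\mu} = -\Qt(v(s), v(s)_-)$. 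Now the key algebraic point: writing $v = v_+ - v_-$, one has $\Qt(v(s), v(s)_-) = \Qt(v(s)_+ , v(s)_-) - \Qt(v(s)_-) \leq -\Qt(v(s)_-) \leq 0$, because the cross term $\Qt(v_+, v_-)$ is $\leq 0$ (the gradient term $\int \as{\nabla v_+, \nabla v_-}d\mu = 0$ since $\nabla v_+$ and $\nabla v_-$ have disjoint supports, and the potential term $\int V v_+ v_- d\mu = 0$ likewise). Therefore $\phi'(s) = -\as{\partial_t v(s), v(s)_-}_{\rho\mu} \leq -\Qt(v(s)_-) \leq 0$, so $\phi$ is nonincreasing. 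Combined with $v(t)_- \to 0$ in $L^2$ as $t \to 0+$, i.e.\ $\phi(0+) = 0$, monotonicity forces $\phi \equiv 0$ on $(0,T')$, hence $v(t)_- = 0$ for all $t$, i.e.\ $v \geq 0$; letting $T' \uparrow T$ finishes the argument.

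**Main obstacle.** The delicate point is the rigorous justification of the chain rule for $\phi(t) = \frac12\|v(t)_-\|^2$ and the identity $\phi'(t) = -\as{\partial_t v(t), v(t)_-}_{\rho\mu}$: one only knows $v$ is differentiable as an $L^2(M,\rho\mu)$-valued path, not that $t \mapsto v(t)_-$ is differentiable, so this should be handled via a difference-quotient estimate using $1$-Lipschitz dependence of $x \mapsto x_-$ and lower semicontinuity, or by mollification in time. A secondary technical care-point is that the pairing $\as{\cL v(s), v(s)_-}_{\rho\mu}$ must be interpreted through $\Qt$ rather than literally — this requires knowing $v(s) \in W^1(M,\rho+V)$ (given) so that $\cL v(s)$, a priori only a distribution, pairs with the form-domain element $v(s)_-$ via the Silverstein identity; here the order-ideal lemma is exactly what makes $v(s)_-$ land in the right space. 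Once these two analytic points are in place, the rest is the routine Gronwall/monotonicity conclusion.
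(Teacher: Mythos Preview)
Your proposal is correct and follows essentially the same route as the paper's proof: test the distributional inequality against $v(t)_-$, which lies in $W_0^1(M,\rho+V)$ by the order-ideal lemma, use $\Qt(v_+,v_-)\leq 0$ to obtain $\as{\partial_t v,v_-}_{\rho\mu}\geq 0$, and conclude that $t\mapsto\|v(t)_-\|_2^2$ is nonincreasing with limit $0$ at $t=0+$. The two technical points you flag are exactly the ones the paper addresses in detail: the pairing with $v(s)_-$ is made rigorous by approximating with \emph{nonnegative} $\varphi_n\in C_c^\infty(M)$ converging in $\|\cdot\|_{W^1}$, and the chain rule $\partial_t\|v_-\|_2^2 = -2\as{\partial_t v,v_-}_{\rho\mu}$ is established by a direct difference-quotient computation using the $C^1$ function $\varphi(x)=x^2 1_{\{x\leq 0\}}$ with the remainder bound $|R(x,y)|\leq(x-y)^2$, which is precisely the Lipschitz-type argument you suggest.
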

\begin{proof}
   According to Lemma~\ref{lemma:boundary inequality} the 'boundary condition' $v \geq 0 \mod W_0^1(M,\rho + V)$ implies $v_- \in  W_0^1(M,\rho + V)$. Thus, (for every fixed time) we can choose a sequence of nonnegative functions in $C_c^\infty(M)$ with  $\varphi_n \to v_-$ with respect to $\|\cdot\|_{W^1}$ (this can be proven along the same lines as \cite[Lemma~5.4]{Gri}). Recall that $\mu = \Psi {\rm vol}_g$, $\as{\cdot,\cdot}_{\rho \mu}$ denotes the inner product of $L^2(M,\rho \mu)$ and $\as{\cdot,\cdot}$ denotes the dual pairing of $\mathcal D'(M)$. Using the distributional Definition of $\cL$ and $\partial_t v \geq \cL v$, we obtain
\begin{align*}
 \as{\partial_t v, v_-}_{\rho \mu} &= \lim_{n \to \infty}\as{\partial_t v, \varphi_n}_{\rho \mu}\\
 &= \lim_{n \to \infty}\as{\partial_t v, \rho \Psi \varphi_n} \\
 &\geq  \lim_{n \to \infty} \as{\cL v, \rho\Psi \varphi_n} \\
 &=  -\lim_{n \to \infty} \Qt(v,\varphi_n)\\
 &= -\Qt(v,v_-).
\end{align*} 
Since $\Qt$ is a Dirichlet form, we have $v_+ \in D(\Qt)$ and $\Qt(v_+,v_-) \leq 0$. Therefore, the above amounts to 
$$ \as{\partial_t v, v_-}_{\rho\mu} \geq \Qt(v_-) \geq 0.$$
We shall see below that  $\partial_t \|v_-\|_2^2 = -2 \as{\partial_t v, v_-}_{\rho \mu}$, which implies $\partial_t \|v_-\|_2^2 \leq 0$. Hence,  the function $t \mapsto \|v(t)_-\|_2^2$ is decreasing in time. Since we assumed $v(t)_- \to 0$ in $L^2(M,\rho\mu)$, as $t \to 0+$, we obtain $v_- = 0$ on $(0,T)$ and we arrive at the conclusion $v \geq 0$.

It remains to prove $\partial_t \|v_-\|_2^2 = -2 \as{\partial_t v, v_-}_{\rho \mu}$. To this end, consider the $C^1$-function
$$\varphi:\R \to \R, \varphi(x) = \begin{cases}
                                     x^2 &\text{if } x \leq 0,\\
                                     0 &\text{else.}
                                    \end{cases}
$$
Its derivative satisfies
$$\varphi'(x) = \begin{cases}
                                     2x &\text{if } x \leq 0,\\
                                     0 &\text{else.}
                                    \end{cases} $$
An elementary computation shows that for any $x,y\in \R$ we have
$$\varphi(y) = \varphi(x) + \varphi'(x)(y-x) + R(x,y)$$
with a remainder $R$ that satisfies $|R(x,y)| \leq (x-y)^2$. We obtain
\begin{align*} 
\int_M ((v(s+h)_-)^2 &- (v(s)_-)^2)\rho d\mu = \int_M \left(\varphi(v(s+h)) - \varphi(v(s))\right)\rho d\mu \\
&=  \int_M \varphi'(v(s)) (v(s+h) - v(s)) \rho d\mu + \int_M R(v(s+h),v(s))\rho d\mu\\
&= -2 \int_M  v(s)_- (v(s+h) - v(s))\rho d\mu + \int_M R(v(s+h),v(s))\rho d\mu.
\end{align*}
Since $\partial_t v$ exists in $L^2(M,\mu)$ as a strong limit, we also have
\begin{align*}
 \frac{1}{h} \left| \int_M R(v(s+h),v(s)) \rho d\mu \right|&\leq \frac{1}{h} \int_M (v(s+h) - v(s))^2 \rho d\mu\\
 &= h \left\| \frac{v(s+h) - v(s)}{h} \right\|_2^2 \to 0, \text{ as } h \to 0,
\end{align*}
and
$$\frac{1}{h} \int_M  v(s)_- (v(s+h) - v(s))\rho d\mu \to  \as{(\partial_t v)(s), v(s)_-}_{\rho \mu}, \text{ as } h \to 0. $$
Combining these considerations shows  $\partial_t \|v_-\|_2^2 = -2 \as{\partial_t v, v_-}_{\rho \mu}$ and finishes the proof.
\end{proof}

\begin{remark}
 The previous lemma only relies on the fact that we work on a weighted manifold at one place. We use the identity  $-\as{\cL u,\rho \Psi\varphi} = \Qt(u,\varphi)$ for $u \in D(\Qt)$ and $\varphi \in C_c^\infty(M)$. Suppose that $\mathcal{E}$ and $\ow{\mathcal{E}}$ are Dirichlet forms (on an arbitrary $L^2$-space) such that $\ow{\mathcal{E}}$ extends $\mathcal{E}$ and $D(\ow{\mathcal{E}})$ is an order ideal of  $D(\mathcal{E})$ (in this case $\ow{\mathcal E}$ is called Silverstein extension of $\mathcal{E}$, cf. the previous remark). Our proof of Lemma~\ref{lemma:boundary inequality}  also works for the pair $\mathcal{E}$ and $\ow{\mathcal{E}}$, i.e., for $u \in D(\ow{\mathcal{E}})$ the inequality $u \geq 0 \mod D(\mathcal{E})$ is equivalent to $u_- \in D(\mathcal{E})$.  If, furthermore, one  takes the inequality
$$\as{\partial_t u,\varphi} \geq - \ow{\mathcal{E}}(u,\varphi) $$
for all nonnegative $\varphi$ in a suitable core of $D(\mathcal{E})$ as a replacement for the distributional inequality $\partial_t u \geq \cL u$, then  one can prove a parabolic maximum principle for the pair of forms $\mathcal{E}$ and $\ow{\mathcal{E}}$ along the same lines as above. In this sense our proof of the lemma is not only an extension of \cite[Theorem~5.16]{Gri} to the case when $V \neq 0$, but to general Dirichlet forms.
\end{remark}

\begin{theorem}[Elliptic maximum principle]\label{theorem:elliptic maximum principle}
Let $f \in W^1(M,\rho + V)$ with $$f \geq 0 \mod W^1_0(M,\rho + V).$$ If for some $\alpha > 0$ it satisfies $(\alpha - \cL) f \geq 0$, then  $f \geq 0$.
\end{theorem}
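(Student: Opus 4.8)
The plan is to mimic the structure of the proof of the parabolic maximum principle, but in the stationary setting, using the negative part $f_-$ as a test object. First I would reduce the hypothesis $f \geq 0 \mod W^1_0(M,\rho+V)$ via Lemma~\ref{lemma:boundary inequality} to the statement $f_- \in W^1_0(M,\rho+V)$. Since $f_- \in W^1_0(M,\rho+V)$, I can pick a sequence $(\varphi_n)$ of nonnegative functions in $C_c^\infty(M)$ with $\varphi_n \to f_-$ with respect to $\|\cdot\|_{W^1}$ (as in the parabolic proof, invoking the analogue of \cite[Lemma~5.4]{Gri}). The key point is that $f_-$ is an admissible "test function against $\widetilde Q_{\rho,V}$": although $f_-$ need not lie in the smaller form domain $D(Q_{\rho,V})$, it does lie in $D(\widetilde Q_{\rho,V}) = W^1(M,\rho+V)$, and the identity $-\as{\cL u, \rho\Psi\varphi} = \widetilde Q_{\rho,V}(u,\varphi)$ extends by density from $\varphi \in C_c^\infty(M)$ to $\varphi = f_-$.

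Next I would test the distributional inequality $(\alpha - \cL)f \geq 0$ against the nonnegative functions $\rho\Psi\varphi_n$ and pass to the limit, exactly as in the parabolic argument:
\begin{align*}
 0 &\leq \lim_{n\to\infty}\as{(\alpha - \cL)f, \rho\Psi\varphi_n} \\
   &= \lim_{n\to\infty}\Big(\alpha\as{f,\varphi_n}_{\rho\mu} + \widetilde Q_{\rho,V}(f,\varphi_n)\Big) \\
   &= \alpha\as{f, f_-}_{\rho\mu} + \widetilde Q_{\rho,V}(f, f_-).
\end{align*}
Now I decompose $f = f_+ - f_-$. For the $L^2$-term, $\as{f,f_-}_{\rho\mu} = -\|f_-\|_2^2$ since $f_+ f_- = 0$. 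For the form term, since $\widetilde Q_{\rho,V}$ is a Dirichlet form one has $f_+ \in D(\widetilde Q_{\rho,V})$ and $\widetilde Q_{\rho,V}(f_+, f_-) \leq 0$, hence $\widetilde Q_{\rho,V}(f, f_-) = \widetilde Q_{\rho,V}(f_+,f_-) - \widetilde Q_{\rho,V}(f_-) \leq -\widetilde Q_{\rho,V}(f_-) \leq 0$. Combining, $0 \leq -\alpha\|f_-\|_2^2 - \widetilde Q_{\rho,V}(f_-) \leq -\alpha\|f_-\|_2^2$. Since $\alpha > 0$, this forces $\|f_-\|_2^2 = 0$, i.e.\ $f_- = 0$, which is the assertion $f \geq 0$.

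The main obstacle I anticipate is the justification that $f_-$ is a legitimate test element in the integration-by-parts identity $-\as{\cL f, \rho\Psi\varphi} = \widetilde Q_{\rho,V}(f,\varphi)$, i.e.\ that one may replace $\varphi \in C_c^\infty(M)$ by $\varphi = f_- \in W^1(M,\rho+V)$ and still have convergence of both $\as{(\alpha-\cL)f, \rho\Psi\varphi_n}$ to $\as{(\alpha-\cL)f, \rho\Psi f_-}$ and $\widetilde Q_{\rho,V}(f,\varphi_n)$ to $\widetilde Q_{\rho,V}(f,f_-)$. The form-convergence is immediate from $\varphi_n \to f_-$ in $\|\cdot\|_{W^1}$ and Cauchy--Schwarz for $\widetilde Q_{\rho,V}$. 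The delicate part is that the distributional pairing $\as{(\alpha - \cL)f, \cdot}$, a priori only defined against test functions, extends continuously in the $W^1$-norm; this works precisely because $(\alpha-\cL)f \geq 0$ as a distribution is therefore (locally) a measure, but more cleanly one argues that $(\alpha-\cL)f$, when paired via $\rho\Psi$, agrees with the bounded functional $g \mapsto \alpha\as{f,g}_{\rho\mu} + \widetilde Q_{\rho,V}(f,g)$ on the dense subspace $C_c^\infty(M)$ of $W^1(M,\rho+V)$, and the latter functional is manifestly $\|\cdot\|_{W^1}$-continuous, so the extension to $g = f_-$ is unambiguous. Everything else is routine and parallels the corresponding steps already carried out in the proof of Theorem~\ref{theorem:parabolic maximum principle}.
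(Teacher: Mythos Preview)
Your proposal is correct and follows essentially the same route as the paper's proof: reduce to $f_- \in W^1_0(M,\rho+V)$ via Lemma~\ref{lemma:boundary inequality}, approximate $f_-$ by nonnegative $\varphi_n \in C_c^\infty(M)$, test the distributional inequality against $\rho\Psi\varphi_n$, pass to the limit using the $\|\cdot\|_{W^1}$-continuity of $g \mapsto \alpha\as{f,g}_{\rho\mu} + \Qt(f,g)$, and conclude via $\Qt(f_+,f_-) \leq 0$. The paper presents the final estimate slightly more directly (it stops at $\Qt(f,f_-) \leq 0$ rather than going through the intermediate bound $-\Qt(f_-)$), but the substance is identical.
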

\begin{proof}
 Lemma~\ref{lemma:boundary inequality} implies $f_- \in W^1_0(M,\rho + V)$. As  we have seen in the proof of Lemma~\ref{theorem:parabolic maximum principle}, we can therefore choose a  sequence $(\varphi_n)$ of nonnegative functions in $C_c^\infty(M)$ that converges to $f_-$ in $W^1_0(M,\rho + V)$. Using the definition of $\cL$ we obtain
 \begin{align*}
  \Qt(f,f_-) &= \lim_{n\to \infty} \Qt(f,\varphi_n) = -\lim_{n\to \infty}  \as{\cL f,\Psi \rho \varphi_n} \\
  &\geq  -\alpha \lim_{n\to \infty}  \as{f,\Psi \rho\varphi_n} = - \alpha \as{f,f_-}_{\rho \mu} = \alpha \|f_-\|_2^2.
 \end{align*}
 Since $\Qt$ is a Dirichlet form, we also have $\Qt(f_+,f_-) \leq 0$ and therefore $\Qt(f,f_-) \leq 0$. Combined with the previous computation this shows $f_- = 0$.
 \end{proof}
\begin{remark}
 This maximum principle also holds true for a Dirichlet form and a Silverstein extension, cf. the remark after the proof of Theorem~\ref{theorem:parabolic maximum principle}.
\end{remark}

\section{Proof of the properties of $N_\alpha$ and $H_t$} \label{section:proof of the main theorems}

This section is devoted to proving Theorem~\ref{theorem:main result}. For the proof we need several auxiliary lemmas. Let $\Omega \subseteq M$ an open subset. We denote by $(T_t^\Omega)$ the $L^2(\Omega,\rho\mu)$-semigroup of the Dirichlet form 
$$\Qr^\Omega(f,g) = \int_\Omega \as{\nabla f,\nabla g} d \mu + \int_\Omega Vfg d\mu$$
with domain $D(\Qr^\Omega) = W_0^1(\Omega,\rho + V)$. We extend it to $f \in L^2(M,\rho \mu)$ by letting
$$T_t^\Omega f := \begin{cases} T^\Omega_t f|_\Omega &\text{on } \Omega,\\
                   0 &\text{on } M\setminus \Omega.
                  \end{cases}
$$
Similarly, we write $(G_\alpha^\Omega)$ for the $L^2(\Omega,\rho \mu)$-resolvent of $\Qr^\Omega$ and extend it to $L^2(M,\rho\mu)$ 

\begin{lemma}[Duhamel's principle] \label{lemma:duhamel's principle}
 Let $\Omega \subseteq M$ be an open relatively compact subset and let $f \in C^\infty(M)$. Let $u \in C^\infty((0,\infty) \times M) \cap C([0,\infty) \times M)$ nonnegative. If $\partial_t u - \cL u \geq f$ on $(0,\infty) \times \Omega$, then 
 $$u_t\geq T_t^\Omega u_0 + \int_0^t T^\Omega_s f ds.$$
\end{lemma}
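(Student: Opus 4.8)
## Proof Strategy for Duhamel's Principle (Lemma~\ref{lemma:duhamel's principle})

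The plan is to reduce the claimed inequality to the parabolic maximum principle (Theorem~\ref{theorem:parabolic maximum principle}) applied on the relatively compact set $\Omega$, where the semigroup $(T_t^\Omega)$ is well-behaved because $\Omega$ has finite measure and compact closure. First I would fix $t_0 > 0$ and introduce the auxiliary path $w(s) := u_s - T_s^\Omega u_0 - \int_0^s T_r^\Omega f\, dr$ for $s \in (0,t_0)$, viewed as a path with values in $L^2(\Omega,\rho\mu)$; the goal is to show $w \geq 0$, which gives the assertion after restricting to $\Omega$ and recalling that $T_t^\Omega$ vanishes off $\Omega$ while $u$ is nonnegative everywhere. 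The key computation is that $\partial_s w$ exists as a strong $L^2(\Omega,\rho\mu)$-limit and satisfies, in the distributional sense on $\Omega$,
$$\partial_s w = \partial_s u - L^\Omega_{\rho,V}(T_s^\Omega u_0) - f - \int_0^s L^\Omega_{\rho,V} T_r^\Omega f\, dr \geq \cL u - \cL\!\left(T_s^\Omega u_0 + \int_0^s T_r^\Omega f\, dr\right) = \cL w,$$
using $\partial_s u - \cL u \geq f$ on $(0,\infty)\times\Omega$ and the fact that $s \mapsto T_s^\Omega u_0 + \int_0^s T_r^\Omega f\,dr$ solves the inhomogeneous heat equation $\partial_s z = \cL z + f$ on $\Omega$ with $z_0 = u_0$ in the appropriate weak sense. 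Then I would check the three remaining hypotheses of Theorem~\ref{theorem:parabolic maximum principle}: that $w(s) \in W^1(M,\rho+V)$ (here $\Omega$ relatively compact and $u,f$ smooth up to $s=0$ help, since everything lives on a fixed compact set and the semigroup terms lie in $D(\Qr^\Omega) \subseteq W_0^1$), that $w(s)_- \to 0$ in $L^2$ as $s \to 0+$ (because $u_s \to u_0$ locally uniformly and $T_s^\Omega u_0 \to u_0|_\Omega$, $\int_0^s T_r^\Omega f \to 0$), and the boundary condition $w(s) \geq 0 \bmod W_0^1(M,\rho+V)$, which should hold because $w(s) = u_s - (\text{something in } W_0^1(\Omega,\rho+V))$ and $u_s \geq 0$, so $w(s)_- \leq$ (that $W_0^1$ function), invoking Lemma~\ref{lemma:order ideal}.

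The main obstacle I expect is the rigorous justification of the differentiability and the weak-equation identity for the Duhamel term $\int_0^s T_r^\Omega f\, dr$ and of the semigroup term $T_s^\Omega u_0$: one needs that $T_s^\Omega u_0 \in D(L^\Omega_{\rho,V})$ for $s > 0$ with $\partial_s T_s^\Omega u_0 = L^\Omega_{\rho,V} T_s^\Omega u_0$ strongly in $L^2(\Omega,\rho\mu)$ — standard analytic semigroup theory, since $u_0|_\Omega \in L^2(\Omega,\rho\mu)$ as $\Omega$ is relatively compact — and that the map $s \mapsto \int_0^s T_r^\Omega f\, dr$ is strongly $C^1$ with derivative $f + L^\Omega_{\rho,V}\int_0^s T_r^\Omega f\,dr$ (equivalently $A_s f \in D(L^\Omega_{\rho,V})$), which follows from the smoothing property of the analytic semigroup plus $\int_0^s \|L^\Omega_{\rho,V} T_r^\Omega f\|_2\, dr < \infty$. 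The other delicate point is matching the distributional identity on $\Omega$ (test functions in $C_c^\infty(\Omega)$) with the weak formulation required by Theorem~\ref{theorem:parabolic maximum principle} (which tests against $C_c^\infty(M)$ but only needs the inequality with $v_- \in W_0^1(M,\rho+V)$); since $w(s)_-$ is supported in $\overline\Omega$ and approximable by functions in $C_c^\infty(\Omega)$, the two notions agree here, and that is exactly what the maximum principle's proof uses.

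Once $w \geq 0$ on $(0,t_0)\times M$ is established for every $t_0$, the lemma follows by letting $t_0$ range over all positive reals. I would present the semigroup-theoretic facts as quotations from standard references on analytic semigroups / Dirichlet form theory rather than reproving them, and spend the bulk of the written proof on verifying the four hypotheses of Theorem~\ref{theorem:parabolic maximum principle} for $w$, since that is where the geometry (relative compactness of $\Omega$, nonnegativity of $u$, the order-ideal property) actually enters.
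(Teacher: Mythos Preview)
Your approach is essentially identical to the paper's: define the difference $v_t = u_t - T_t^\Omega u_0 - \int_0^t T_s^\Omega f\,ds$, verify the hypotheses of the parabolic maximum principle (Theorem~\ref{theorem:parabolic maximum principle}) applied on the weighted manifold $(\Omega,g,\mu)$, and conclude $v \geq 0$ there while $u \geq 0$ handles $M\setminus\Omega$. One notational slip to fix: you repeatedly write $W^1(M,\rho+V)$, $W_0^1(M,\rho+V)$, and $C_c^\infty(M)$ where you mean the corresponding spaces over $\Omega$ --- since the maximum principle is applied on $\Omega$, the membership $u_s|_\Omega \in W^1(\Omega,\rho+V)$ is automatic from relative compactness, and your worry about ``matching the distributional identity on $\Omega$ with the weak formulation on $M$'' simply evaporates.
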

\begin{proof}
  We prove that the restriction of the function
  $$t \mapsto v_t:= u_t - \left( T_t^\Omega u_0 + \int_0^t T^\Omega_s f\, ds\right)$$
  to $L^2(\Omega,\rho\mu)$ satisfies all the assumptions of 
  Theorem~\ref{theorem:parabolic maximum principle} (when applied on the weighted manifold $(\Omega,g,\mu)$). The nonnegativity of $u$ and the vanishing of $T_t^\Omega u_0$ and $T^\Omega_t f$ outside of $\Omega$ then imply the claim.
  
  Since $\Omega$ is relatively compact  and $u \in C^\infty((0,\infty) \times M)\cap C([0,\infty) \times M)$ the following holds true:
  \begin{itemize}
   \item $\partial_t u$ exists as a strong limit in $L^2(\Omega,\rho\mu)$,
   \item $u(t,\cdot) \to u_0$ in $L^2(\Omega,\rho\mu)$, as $t \to 0+$,
   \item $u|_\Omega \in W^1(\Omega, \rho + V)$.
  \end{itemize}
  Moreover, $u_0$ and $f$ are continuous on $\overline \Omega$  so that their restrictions to $\Omega$ belong to $L^2(\Omega,\rho\mu)$. The semigroup $(T_t^\Omega)$ is strongly continuous on $L^2(\Omega,\mu)$. Therefore, 
  $$T_t^\Omega u_0 + \int_0^t T^\Omega_s f ds \to u_0\mbox{  in $L^2(\Omega,\rho\mu)$, as $t \to 0+$.}$$
  This shows $\lim_{t \to 0+} v(t) = 0$ in $L^2(\Omega,\rho\mu)$.
  
  If we denote by $\Lr^\Omega$ the self-adjoint operator associated with $\Qr^\Omega$, it follows from standard semigroup theory that $T_t^\Omega u_0 \in D(\Lr^\Omega)$ and $\int_0^t T^\Omega_s f ds \in D(\Lr^\Omega)$. Since $u \geq 0$ and $D(\Lr^\Omega) \subseteq D(\Qr^\Omega) = W^1_0(\Omega,\rho+V)$, this implies $v \geq 0 \mod W^1_0(\Omega,\rho+V).$  By semigroup theory we also have $\Lr^\Omega T_t^\Omega u_0 = \partial_t (T_t^\Omega u_0)$  and 
 $$\Lr^\Omega \int_0^t T^\Omega_s f ds = T^\Omega_t f - f = \partial_t \int_0^t T^\Omega_s f ds - f,$$
 where the time derivatives exist as strong limits in $L^2(\Omega,\rho\mu)$. Therefore, $\partial_t v$ exists strongly in $L^2(\Omega,\rho\mu)$. Using that the operator $\Lr^\Omega$ is a restriction of $\cL$, these computations and the pointwise assumption $\partial_t u - \cL u \geq f$ on $\Omega$ show
 \begin{align*}
  \partial_t  v &= \partial_t  u -  \partial_t  T^\Omega_t u_0  -     \partial_t \int_0^t T^\Omega_s f ds\\
  &\geq \cL  u + f - \cL T_t^\Omega u_0 - \cL \int_0^t T^\Omega_s f ds - f\\
  &= \cL v
 \end{align*}
 in the sense of $\mathcal{D}'(\Omega)$. 
 Thus, we confirmed that $v$ satisfies all the assumptions of 
 Theorem~\ref{theorem:parabolic maximum principle} and we arrive at $v \geq 0$.
\end{proof}
The following lemma is an extension of part of \cite[Theorem~7.13]{Gri} to integrals of semigroups and the case with $V \neq 0$. 

\begin{lemma}\label{lemma:smooth choice}
 Let $e,f \in C_c^\infty(M)$ and let $u:(0,\infty) \to L^2(M,\rho \mu)$ given by
 $$u(t) = T_t^{\rho,V} e + \int_0^t T^{\rho,V}_s f ds. $$
 For each $t > 0$ we have $u(t) \in C^\infty(M)$. Moreover, the function $\tilde u:(0,\infty) \times M \to \R$, $(t,x) \mapsto u(t)(x)$ belongs to $C^\infty((0,\infty) \times M) \cap C([0,\infty) \times M)$  and satisfies
 $$\begin{cases}
    (\partial_t - \cL) \tilde u = f,\\
    \tilde u_0 = e.
   \end{cases}
$$
\end{lemma}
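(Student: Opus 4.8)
The plan is to establish Lemma~\ref{lemma:smooth choice} by combining interior parabolic/elliptic regularity (Appendix~\ref{section:local regularity theory}) with Duhamel's principle (Lemma~\ref{lemma:duhamel's principle}) and an exhaustion argument. First I would observe that $\partial_t \widetilde u = \cL \widetilde u + f$ should hold as an identity in $L^2(M,\rho\mu)$: since $e,f \in C_c^\infty(M) \subseteq D(\Lr)$, semigroup theory gives $T_t^{\rho,V}e \in D(\Lr)$ with $\partial_t T_t^{\rho,V}e = \Lr T_t^{\rho,V}e$, and $\partial_t \int_0^t T_s^{\rho,V} f\, ds = T_t^{\rho,V} f = \Lr \int_0^t T_s^{\rho,V} f\, ds + f$ (the last step using $\Lr A_t f = T_t^{\rho,V}f - f$ from the fundamental theorem for the semigroup applied to $f \in D(\Lr)$). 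All time derivatives exist as strong $L^2$-limits, and because $\Lr$ is a restriction of the distributional operator $\cL$, this yields $(\partial_t - \cL)\widetilde u = f$ in $\mathcal{D}'((0,\infty)\times M)$. Local parabolic regularity then upgrades $\widetilde u$ to a $C^\infty((0,\infty)\times M)$ function, and local elliptic regularity gives $u(t) \in C^\infty(M)$ for each fixed $t$.

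Next I would handle the initial condition, i.e. show $\widetilde u \in C([0,\infty)\times M)$ with $\widetilde u_0 = e$. The term $\int_0^t T_s^{\rho,V}f\, ds \to 0$ in $L^2$ as $t\to 0+$, and $T_t^{\rho,V}e \to e$ in $L^2$ by strong continuity, so $u(t) \to e$ in $L^2(M,\rho\mu)$. To promote this to local uniform convergence I would invoke the local regularity estimates once more: interior parabolic estimates on a cylinder $(0,t_0)\times\Omega'$ bound $C^k$-norms of $\widetilde u$ near $t=0$ in terms of $L^2$-norms on a slightly larger set, and since $e$ is smooth, comparing $\widetilde u$ with (a local heat-type correction of) $e$ and using the $L^2$-convergence gives $u_t \to e$ locally uniformly. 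This is exactly the notion of "initial value $e$" defined in the preliminaries.

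For the remaining content of the lemma — which is implicitly that $\widetilde u$ is the object produced by the semigroup, consistent with later use — I would exhaust $M$ by relatively compact open sets $\Omega_n \uparrow M$ and apply Lemma~\ref{lemma:duhamel's principle}: since $(\partial_t - \cL)\widetilde u = f$ pointwise (in particular $\geq f$), for nonnegative $e,f$ we get $u_t \geq T_t^{\Omega_n} u_0 + \int_0^t T_s^{\Omega_n} f\, ds$, and letting $n\to\infty$ (the truncated semigroups increase to $T_t^{\rho,V}$) recovers $u_t \geq T_t^{\rho,V} e + \int_0^t T_s^{\rho,V} f\, ds$, with the reverse inequality being the definition; for general $e,f$ one splits into positive and negative parts. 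This monotone exhaustion is also what makes the smooth version of $u(t)$ the honest pointwise one, matching the $L^+(M)$-extensions used in Theorem~\ref{theorem:main result}.

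The main obstacle I expect is the boundary/initial-time behavior: obtaining genuine continuity of $\widetilde u$ up to $t=0$ with value $e$ locally uniformly, rather than merely $L^2$-convergence. This requires care with the parabolic regularity estimates near the initial slice — one cannot simply apply interior estimates on a cylinder that touches $t=0$, so the argument should compare $\widetilde u$ on $(0,t_0)\times\Omega$ with a solution on a slightly larger cylinder having smooth Cauchy data agreeing with $e$, or equivalently exploit that $e\in C_c^\infty$ lies in the domain of every power of $\Lr$ so that $\|T_t^{\rho,V}e - e\|_2 = O(t)$ and bootstrap. Everything else — the distributional identity, the smoothing, the Duhamel comparison — is routine given the tools already assembled in the paper.
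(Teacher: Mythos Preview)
Your core argument—strong $L^2$ derivative via semigroup theory, the distributional identity $(\partial_t-\cL)\tilde u=f$, then parabolic hypoellipticity (Lemma~\ref{lemma:parabolic regularity})—matches the paper exactly. Two points deserve comment.

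For the initial value, the paper carries out precisely your option (b), but concretely and via \emph{elliptic} rather than parabolic estimates: since $e,f\in C_c^\infty(M)\subseteq D(\Lr^m)$ for every $m\geq 0$, the spectral calculus gives $\|\Lr^m(T_t^{\rho,V} e-e)\|_2\to 0$ and $\|\Lr^m\int_0^t T_s^{\rho,V} f\,ds\|_2\to 0$ as $t\to 0+$ (dominated convergence against the spectral measures). Hence $\cL^m(\tilde u_t-e)\to 0$ in $L^2_{\rm loc}$ for all $m$, and the local Sobolev embedding of Lemma~\ref{lemma: local sobolev embeddings} converts this directly into locally uniform convergence $\tilde u_t\to e$. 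This bypasses entirely the difficulty you flag about parabolic estimates up to $t=0$; no comparison with an auxiliary Cauchy problem is needed.

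Your third paragraph is superfluous here. The lemma \emph{defines} $u(t)$ by the semigroup formula and asks only that it be smooth, solve the PDE, and attain the initial value; there is no inequality to establish and nothing to recover by exhaustion. The Duhamel comparison and the passage $\Omega_n\nearrow M$ belong to the proof of Theorem~\ref{theorem:main result} (where one bounds arbitrary subsolutions against $1-H$), not to this lemma.
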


\begin{proof}
By standard semigroup theory the function $u$ is continuous and continuously differentiable (strongly in $L^2(M,\rho\mu)$). Let $u'$ denote its derivative. Lemma~\ref{lemma:measurable choice2} yields the existence of $\tilde u \in L^1_\mathrm{loc}((0,\infty) \times M)$ with $\partial_t \tilde u \in L_\mathrm{loc}^1((0,\infty) \times M)$ such that $\tilde u_t = u(t)$ for $\lambda$-a.e. $t > 0$  and $(\partial_t \tilde  u)_s = u'(s)$ for $\lambda$-a.e. $s>0$. Moreover, since the generator of $T^{\rho,V}_t$ is a restriction of $\cL$ we have $u' - \cL u = f$ in $L^2(M,\rho \mu)$. Together with the  properties of $\tilde u$ this implies $(\partial_t - \cL) \tilde u = f$ in the sense of distributions. We infer from Lemma~\ref{lemma:parabolic regularity} that $\tilde u \in C^\infty((0,\infty) \times M)$ (or more precisely it has a smooth version, which we consider from now on). The continuity of $u$ and the smoothness of $\tilde u$ imply $u(t) = \tilde u_t$ in $L^2(M,\rho\mu)$ for all $t > 0$.

It remains to prove the result on the initial value of $\tilde u$. Recall that $\Lr$ is the self-adjoint operator associated with $Q_{\rho,V}$ (the generator of $(T^{\rho,V}_t)$). Let $\mu_e,\mu_f$ the spectral measures of $\Lr$ at $e,f$, respectively. Let $m\in \N_0$ arbitrary. Since $e,f \in C_c^\infty(M)$,  we have $e,f \in D(\Lr^m)$  and the spectral calculus of $\Lr$ shows $T_t e, \int_0^t   T_s fds \in D(\Lr^m)$,
 $$\|\Lr^m(T_t e - e)\|_2^2 = \int_{-\infty}^0 \lambda^{2m} (e^{\lambda t} - 1)^2 d \mu_e $$
 and
 $$\|\Lr^m\int_0^t   T_s fds\|_2^2 = \int_{-\infty}^0 \lambda^{2(m-1)} (e^{\lambda t} - 1)^2 d \mu_f. $$
 For any $t> 0$ the functions 
 $$f_t:(-\infty,0] \to \R, f_t(\lambda) = \lambda^{2(m-1)} (e^{\lambda t} - 1)^2$$
 satisfy $|f_t| \leq t|\lambda|^{2m}$ and since $e,f  \in D(\Lr^m)$, the function $\lambda \mapsto |\lambda|^{2m}$ is $\mu_e$ and $\mu_f$ integrable. Hence, it follows from Lebesgue's dominated convergence theorem that
 $$\lim_{t \to 0+} \|\Lr^m(T_t e - e)\|_2^2 = \lim_{t \to 0+}\|\Lr^m\int_0^t   T_s fds\|_2^2 = 0.$$
  The operator $\Lr$ is a restriction of $\cL$   so that these computations show $\|\cL^m(\tilde u_t - e)\|_2 \to 0$, as $t \to 0+$, for every $m \in \N_0$.  The local Sobolev embedding theorem, see Lemma~\ref{lemma: local sobolev embeddings}, implies that this convergence also holds locally uniformly. In particular, we obtain $\tilde u_0 = e$ and we arrive at the desired claim. 
\end{proof}

 In the following lemma we denote by $A_t$ the positivity preserving operator  
 $$A_t:L^2(M,\rho\mu) \to L^2(M,\rho \mu), \quad A_t f = \int_0^t T_s^{\rho, V} f ds,$$
 and its extension to $L^+(M)$.
\begin{lemma}\label{lemma:weakstarcontinuity}
 Let $f \in L^+(M)$ such that  $A_t f \in L^\infty(M)$ for all $t> 0$.  Then the map $(0,\infty) \to L^\infty(M)$, $t \mapsto A_t f$ is weak-$*$-continuous.
\end{lemma}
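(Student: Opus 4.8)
The plan is to verify weak-$*$ continuity directly. Since the weak-$*$ topology on $L^\infty(M) = (L^1(M,\rho\mu))^*$ is the initial topology induced by the functionals $h \mapsto \as{h,g}_{\rho\mu}$, $g \in L^1(M,\rho\mu)$, it suffices to fix an arbitrary $g \in L^1(M,\rho\mu)$ and prove that the scalar function $t \mapsto \as{A_t f, g}_{\rho\mu}$ is continuous on $(0,\infty)$. Splitting $g = g_+ - g_-$, we may assume $g \geq 0$. Observe that for each $t > 0$ this pairing is a finite number, namely $\as{A_t f, g}_{\rho\mu} = \int_M (A_t f)\, g \, d(\rho\mu) \leq \|A_t f\|_\infty \|g\|_1 < \infty$, using the hypothesis $A_t f \in L^\infty(M)$.

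The key step is a Tonelli interchange showing
$$\as{A_t f, g}_{\rho\mu} = \int_0^t \as{T^{\rho,V}_s f, g}_{\rho\mu}\, ds \qquad \text{for all } t > 0.$$
To see this, I would approximate $f$ from below by an increasing sequence $(f_n)$ of nonnegative functions in $L^2(M,\rho\mu) \cap L^\infty(M)$ of compact support, say $f_n = (f \wedge n)\, 1_{K_n}$ for an exhaustion $(K_n)$ of $M$ by compact sets. For each $s > 0$ one has $T^{\rho,V}_s f_n \uparrow T^{\rho,V}_s f$ in $L^+(M)$, hence by monotone convergence in the time variable $A_t f_n = \int_0^t T^{\rho,V}_s f_n\, ds \uparrow \int_0^t (T^{\rho,V}_s f)\, ds$, and by the definition of the $L^+(M)$-extension of $A_t$ this last integral equals $A_t f$. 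Since all quantities are nonnegative and measurable — measurability of $s \mapsto \as{T^{\rho,V}_s f, g}_{\rho\mu}$ following from the strong continuity of $(T^{\rho,V}_s)$ on $L^2(M,\rho\mu)$ together with monotone $L^2$-approximation of both $f$ and $g$ — Tonelli's theorem yields the displayed identity.

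With the identity in hand the proof concludes quickly. For every fixed $T > 0$ the hypothesis $A_T f \in L^\infty(M)$ gives
$$\int_0^T \as{T^{\rho,V}_s f, g}_{\rho\mu}\, ds = \as{A_T f, g}_{\rho\mu} \leq \|A_T f\|_\infty\, \|g\|_1 < \infty,$$
so the nonnegative function $s \mapsto \as{T^{\rho,V}_s f, g}_{\rho\mu}$ belongs to $L^1((0,T))$ for every $T > 0$. Therefore its indefinite integral $t \mapsto \int_0^t \as{T^{\rho,V}_s f, g}_{\rho\mu}\, ds = \as{A_t f, g}_{\rho\mu}$ is absolutely continuous on $(0,\infty)$, in particular continuous there. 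As $g \in L^1(M,\rho\mu)$ was arbitrary, $t \mapsto A_t f$ is weak-$*$-continuous.

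The step I expect to require the most care is the bookkeeping between the two monotone $L^+$-extensions: one must justify that $A_t f$, defined abstractly as $\lim_n A_t f_n$, is literally the pointwise time-integral $\int_0^t (T^{\rho,V}_s f)\, ds$ of the $L^+$-extensions $T^{\rho,V}_s f$, so that the Tonelli interchange is legitimate. Everything else is soft: the finiteness assumption $A_T f \in L^\infty(M)$ is exactly what turns the integrand $s \mapsto \as{T^{\rho,V}_s f, g}_{\rho\mu}$ into a locally integrable function of $s$, and local integrability of a density is all one needs for continuity of its primitive.
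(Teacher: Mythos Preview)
Your proof is correct, and it takes a genuinely different route from the paper's. The paper argues by an $\varepsilon$-approximation: given $g\in L^1$ and $t_0>0$, it picks a truncation $\tilde f\in L^2\cap L^\infty$ with $0\le\tilde f\le f$ so that $(A_{t_0+1}(f-\tilde f),|g|)<\varepsilon$, uses monotonicity of $t\mapsto A_t(f-\tilde f)$ to make this error uniform on $(0,t_0+1)$, and then invokes the known weak-$*$ continuity of $s\mapsto T_s^{\rho,V}\tilde f$ on $L^\infty$ to handle the remaining $\int_{t_0}^t(T_s\tilde f,|g|)\,ds$. No Tonelli-type identity for $A_tf$ itself is needed; everything is reduced to the bounded case.

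Your approach instead establishes the pointwise-in-$g$ identity $\langle A_t f,g\rangle_{\rho\mu}=\int_0^t\langle T_s^{\rho,V}f,g\rangle_{\rho\mu}\,ds$ directly via monotone limits and Tonelli, and then observes that the right-hand side is the primitive of a function in $L^1_{\mathrm{loc}}(0,\infty)$, hence continuous. This is more elementary once the bookkeeping you flag is done: since everything is nonnegative, the passage from the Bochner integral $A_tf_n$ to the pointwise time-integral, and the exchange of $n\to\infty$ with both integrals, are all justified by iterated monotone convergence with no subtlety. The paper's argument trades that bookkeeping for the need to control the approximation error uniformly in $t$, which it does via the order monotonicity of $A_t$; your argument trades the uniform-error step for a clean absolute-continuity conclusion.
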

\begin{proof}
 We denote by $(\cdot,\cdot)$ the dual pairing between $L^\infty(M)$ and $L^1(M,\rho \mu)$.  For given  $g \in L^1(M,\rho \mu)$ and $t_0,\varepsilon > 0$ we choose $ \tilde f \in L^2(M, \rho \mu) \cap L^\infty(M)$ with $0 \leq \tilde f \leq f$ such that 
 $$0 \leq (A_{t_0 + 1} f - A_{t_0 + 1} \tilde f, |g|) < \varepsilon. $$
 Such an $\tilde f$ exists by the definition of the extension of $A_t$ to $L^+(M)$ and the assumption $A_t f \in L^\infty(M)$ for all $t > 0$. 
 Since the semigroup $(\Tr)$ is positivity preserving, for $h \in L^+(M)$ the map $(0,\infty) \to L^+(M), t \mapsto A_t h$ is monotone increasing in the sense of the order on $L^+(M)$. Using that $A_t$ is linear on $\mathrm{dom}(A_t)$, this monotonicity implies for $0 < t < t_0+1$ that
 $$|(A_t f - A_t \tilde{f}, |g|)| = |(A_t(f - \tilde f),|g|)| \leq |(A_{t_0 + 1}(f - \tilde f),|g|)| = (A_{t_0 + 1} f - A_{t_0 + 1} \tilde f, |g|) < \varepsilon.  $$
 For $0 < t < t_0+1$ we obtain
 $$|(A_t f - A_{t_0} f,g)| \leq (|A_t \tilde f - A_{t_0} \tilde f|, |g|) +  2 \varepsilon  = \left|\int_{t_0}^t ( T^{\rho,V}_s \tilde f, |g|) ds\right| + 2\varepsilon.   $$
 Since $\tilde f$ was chosen in $L^\infty(M)$, the map $(0,\infty) \to L^\infty(M), s \mapsto  T^{\rho,V}_s \tilde f$ is weak-$*$-continuous. Hence, it follows from the above computation that for $t$ sufficiently close to $t_0$ we have $|(A_t f - A_{t_0} f,g)| < 3\varepsilon$. Since $\varepsilon$ was arbitrary, this finishes the proof. 
\end{proof}

With all these preparations at hand we can now proof Theorem~\ref{theorem:main result}. Recall that  $H_t = \Tr 1 + \int_0^t T^{\rho,V}_s \hat V ds$ and $N_\alpha = \alpha \Gr 1 + \Gr \hat V$ with $\hat V = V / \rho$.

%


 \begin{proof}[Proof of Theorem~\ref{theorem:main result}]
 In this proof we write $G_\alpha$ for $\Gr$ and $T_t$ for $\Tr$ to shorten notation.
 
(a):  The bound $0\leq H$ is obvious as the involved operators are positivity preserving. Let $e,f \in   C_c^\infty(M)$ with $0 \leq e \leq 1$ and $0 \leq f \leq \hat V$ and let $\Omega \subseteq M$ open and relatively compact. A function $u$ as in the maximality statement  satisfies $(\partial_t - \cL)(1-u) \geq \hat V$ and $(1-u)_0 = 1$. Thus, Duhamel's principle, Lemma~\ref{lemma:duhamel's principle},  shows
 $$1 - u_t \geq T_t^\Omega 1 + \int_0^t T_s^\Omega \hat V ds \geq T_t^\Omega e + \int_0^t T_s^\Omega f ds. $$
 Letting $\Omega \nearrow M$ and using Lemma~\ref{lemma:convergence of semigroups} this implies
 \begin{align}\label{inequality:fundamental}
  1 - u_t \geq T_t e + \int_0^t T_s f ds. \tag{$\heartsuit$}
 \end{align}
 Moreover, letting $e \nearrow 1$ and $f \nearrow \hat V$ we arrive at $1 - u_t \geq H_t$ for all $t > 0$. Since the extension of the semigroup $T_t^{\rho,V}$ to $L^\infty(M)$ is weak-$*$-continuous, this observation and Lemma~\ref{lemma:weakstarcontinuity} yield that $(0,\infty) \to L^\infty(M)$, $t \mapsto H_t$ is weak-$*$-continuous.
 
Lemma~\ref{lemma:smooth choice} implies that for each $t > 0$ we have
 $$H^{e,f}_t := T_t e + \int_0^t   T_s fds \in C^\infty(M)$$
 and that the function $H^{e,f}$  given by $(t,x) \mapsto H_t^{e,f}(x)$ satisfies $H^{e,f} \in C^\infty((0,\infty) \times M)$.

Let now $(e_n),(f_n)$  increasing sequences in $C_c^\infty(M)$ with $e_n \to 1$ pointwise and $f_n \to \hat V$ pointwise. Since $(\Tr)$ is positivity preserving and Inequality~\ref{inequality:fundamental} holds for all $e_n,f_n$, for each $t> 0$ we have
$$H^{e_n,f_n}_t \leq H^{e_{n+1},f_{n+1}}_t \leq 1-u_t\,\mu\text{-a.e.}$$
The smoothness of $H^{e_n,f_n}$ and $1-u$ implies that these inequalities hold for all $(t,x) \in (0,\infty)\times M$. In particular, the limit $h:= \lim_{n\to \infty}H^{e_n,f_n}$ exists pointwise.

It follows from the definition of $H$ that for each $t> 0$ we have $h_t = H_t$ $\mu$-a.e. In particular, $h$ is a measurable version of $H$ on $(0,\infty) \times M$  and the bound $H^{e_n,f_n} \leq 1 - u$ yields $h \leq 1-u$.  
 
 From the monotone convergence theorem and the bound $0 \leq h \leq 1$ we further obtain $H^{e_n,f_n} \to h$ in $L^1_{\rm loc}((0,\infty) \times M)$. Since $L^1_{\rm loc}((0,\infty) \times M)$ continuously embeds into $\mathcal{D}'((0,\infty) \times M)$ and since the distributional version of $\partial_t - \cL$ is continuous with respect to $\mathcal{D}'$-convergence, this and Lemma~\ref{lemma:smooth choice} imply 
 $$(\partial_t - \cL)h = \lim_{n \to \infty}(\partial_t - \cL)H^{e_n,f_n} = \lim_{n \to \infty}f_n = \hat V, $$
 in $\mathcal{D}'((0,\infty) \times M)$. It follows from Lemma~\ref{lemma:parabolic regularity} that $h$ has a version   $ \tilde h \in C^\infty((0,\infty) \times M)$. From Fubini's theorem we infer that for a.e. $t> 0$ we have $h_t = \tilde h_t$ $\mu$-a.e. This implies $\tilde h_t = H_t$ in $L^+(M)$ for $\lambda$-a.e. $t> 0$.  Since $\tilde h$ is smooth and $t \mapsto H_t$ is weak-$*$-continuous on $L^\infty(M)$, we arrive at $\tilde h_t = H_t$ in $L^+(M)$ for all $t >0$, the claimed smoothness of $H_t$. The inequality $u_t \leq 1 - H_t$ has been proven along the way.
 
 It remains to show the result on the initial value of $H$. For this purpose it suffices to prove $\tilde{h} \in C([0,\infty) \times M)$ and $\tilde h_0 = 1$. Let $K \subseteq M$ compact and suppose $e = 1$ on $K$. Since for every $t> 0$ we have $H_t^{e,f} \leq H_t \leq 1$ pointwise a.e. and $(t,x) \to H^{e,f}_t(x)$ is smooth,  for every $t > 0$ we obtain the pointwise estimate
 $$|1 - \tilde{h}_t| \leq |1 - H^{e,f}_t|.  $$
 According to Lemma~\ref{lemma:smooth choice} we have $H^{e,f}_t \to e$ locally uniformly, as $t \to 0+$. This implies $\tilde{h}_t \to 0$ locally uniformly on $K$, as $t \to 0+$. Since $K$ was arbitrary, this proves $\tilde{h} \in C([0,\infty) \times M)$ and $\tilde h_0 = 1$.

(b): 
%
%
%
Let $g \in  C^\infty(M)$ with $g \leq 1$  and $(\alpha-\cL) g \leq 0$ be given. For $\Omega \subseteq M$ relatively compact and $e,f \in C_c^\infty(M)$ with $0 \leq e\leq 1$ and $0 \leq f \leq \hat V$ we let
$$h^\Omega_{e,f} = 1  - \alpha G^\Omega_\alpha e -  G^\Omega_\alpha f - g|_\Omega. $$
 Then $h^\Omega_{e,f} \in W^1(\Omega,\rho + V)$. Since $\alpha G^\Omega_\alpha e +  G^\Omega_\alpha f \in W^1_0(\Omega, + V)$ and $g \leq 1$, we have $h^\Omega_{e,f} \geq 0 \mod W^1_0(\Omega,\rho + V)$. Moreover, the inequality $(\alpha - \cL) g \leq 0$ and that $\cL$ is an extension of the generator of $G_\alpha^\Omega$ yield 
$$(\alpha - \cL) h^\Omega_{e,f} = \alpha  + \hat V  - \alpha e - f  - (\alpha - \cL) g \geq 0 \text{ on } \Omega.$$
  With this at hand, Lemma~\ref{theorem:elliptic maximum principle} shows $h^\Omega_{e,f} \geq 0$. Letting first $\Omega \nearrow M$ and using Lemma~\ref{lemma:convergence of semigroups} and letting $e \nearrow 1$, $f \nearrow \hat V$ afterwards shows $1-N_\alpha - g \geq 0$. Let now $g = 0$.  Since the  convergence of $h^\Omega_{e,f}$ is monotone decreasing and $h^\Omega_{e,f}$ is nonnegative, we also obtain $h^\Omega_{e,f} \to 1 - N_\alpha $ in $\mathcal{D}'(M)$. This implies
  $$(\alpha - \cL) (1 - N_\alpha)= \lim_{f \nearrow V}\lim_{e \nearrow 1}\lim_{\Omega \nearrow M}(\alpha - \cL)h^\Omega_{e,f} =  0.$$
   Lemma~\ref{lemma: local sobolev embeddings} yields that $1 - N_\alpha$ and hence also $N_\alpha$ is smooth. This finishes the proof of (b). 

(c): For  $H$ and $N_\alpha$ we choose the smooth versions as in (a) and (b). The $L^2$-resolvent $(G_\alpha)$ is the Laplace transform of $(T_t)$ (in the Bochner sense), i.e., for all $\alpha > 0$ and $f \in L^2(M,\rho\mu)$ we have
$$\int_0^\infty e^{-t\alpha} T_t f dt = G_\alpha f. $$
Let now $e,f \in C_c(X)$ with $0 \leq e \leq 1$ and $0 \leq f \leq V$, respectively, and let $H^{e,f}_t \in C^\infty(M)$ be as defined as  in the proof of (a).   Fubini's theorem   yields 
\begin{align*}
 \int_0^\infty e^{-t\alpha} H^{e,f}_t  dt =  \int_0^\infty e^{-t\alpha} T_t e dt + \int_0^\infty \int_0^t e^{-t\alpha} T_s f dsdt =  G_\alpha e +  \frac{1}{\alpha} G_\alpha f, 
\end{align*}
where the identity holds in $L^2(M,\rho \mu)$. Since both side of the equation are smooth, it also holds pointwise everywhere on $M$. Letting $e \nearrow 1$ and $f \nearrow V$ we have that  $H^{e,f}  \nearrow H$ $\lambda \otimes \mu$-a.e. (cf. proof of (a))  and $G_\alpha e +  \frac{1}{\alpha} G_\alpha f \nearrow \alpha^{-1}N_\alpha$ $\mu$-a.e. With this at hand, for nonnegative $\varphi \in \mathcal{D}(M)$ we conclude with the help of the monotone convergence theorem and Fubini's theorem that
$$\langle \int_0^\infty e^{-t\alpha} H_t  dt,\varphi \rangle = \frac{1}{\alpha} \langle N_\alpha, \varphi \rangle.  $$
Since $\int_0^\infty e^{-t\alpha} H_t dt$ and $N_\alpha$ are continuous, we arrive at the desired identity.

(d): We only prove the statement for $N_\alpha$, the statement for $H$ then follows with the help of assertion (c). For $\alpha > 0$ we consider the smooth function $u = 1 - N_\alpha$. We prove that $u(x) = 0$ for some $x \in M$ implies $u = 0$ on $M$. Since $M$ is connected and the set of zeros $N = \{x \in M \mid u(x) = 0\}$ is closed, it suffices to show that $N$ is also open. 

According to (a) we have $(\alpha - \cL) u = 0$, or equivalently, 
$$-\Delta u + (\alpha \rho + V)u = 0.$$
Let $x \in N$  and let $\varphi:U \to  V$ be  a chart with $x \in U \subseteq M$ and $V \subseteq \R^d$. The equation for $u$ implies that in local coordinates the smooth function $\bar u = u \circ \varphi^{-1}:V \to \R$ satisfies 
$$D\bar u := -\sum_{i,j} a_{ij} \partial_i \partial_j \bar u + \sum_i b_i \partial_i \bar u + c \bar u = 0,$$
where $a_{ij},b_i$ and $c$ are smooth functions on $V$ and $\partial_i$ are the ordinary partial derivatives. Moreover, the $0$-th order coefficient $c$ is nonnegative and by shrinking $U$ if necessary  the coefficient matrix $(a_{ij})$ can be chosen to be uniformly elliptic. For such elliptic operators it is well-known that smooth  functions $v:V \to [0,\infty)$ with $Dv = 0$ and $v(x) = 0$ for some $x \in V$ vanish identically on $V$, see e.g. \cite[Theorem 6.4.4]{Eva}. We conclude $\bar u = 0$ on $V$ so that $u = 0$ on $U$. This shows that the set of zeros of $u$ is open and finishes the proof.
\end{proof}

Our proofs would also allow for real-valued potentials without a definite sign.  However, in this case it seems to be technically hard to even define  $H$ and $N_\alpha$. We shortly discuss this in the following remark.
 
\begin{remark}
Our proof of Theorem~\ref{theorem:main result} shows that in principle we could drop the assumption that the smooth potential $V$ is nonnegative and allow sufficiently small negative parts.  More precisely, if the negative part $V_-$ belongs to the extended Kato class of the operator $L_{\rho,V_+}$ and satisfies $c_\alpha(V_-) < 1$ for some $\alpha > 0$ in the sense of \cite{SV}, the form $\Qr$ is closed on the domain $W^1_0(M,\rho + V_+)$ and the associated semigroup and resolvent are positivity preserving and map $L^\infty(M)$ to $L^\infty(M)$. In this case, also the main technical lemmas, namely the maximum principles Lemma~\ref{theorem:parabolic maximum principle} and Lemma~\ref{theorem:elliptic maximum principle}, hold true for paths in $W^1(M,\rho + V_+)$. In their proofs we only used that the form $Q_{\rho,V}$ satisfies the first Beurling-Deny criterion and not the second. However, it is harder to control the quantity  $\int_0^t T_s V ds$. If we let $H^\Omega_t = T_t^\Omega 1 + \int_0^t T_t^\Omega Vds$,   Duhamel's principle (applied to $1$ and $1-H^\Omega$)  guarantees that for any compact  $\Omega \subseteq M$ and $t> 0$ we have 
$$1 \geq T_t^\Omega 1 + \int_0^t T_t^\Omega Vds \geq 0.$$
In contrast to situation for nonnegative potentials, this inequality and  $T_t1 \in L^\infty(M)$ do not guarantee the finiteness of 
$$\int_0^t T_t |V| ds.$$
Hence, $H$ (and also $N_\alpha$) may not be well-defined. However, if $V_-$ is small as above  and additionally satisfies $V_- \in L^q(M,\rho \mu)$ for some $1\leq q \leq \infty$, it follows from the considerations in \cite{SV} that  $\int_0^t T_t^\Omega V_-ds \in L^q(M,\rho \mu)$. In this situation, the statements of Theorem~\ref{theorem:main result} remain true. Once the existence of $H$ is settled, the above inequalities show $0 \leq H \leq 1$ after letting $\Omega \nearrow M$ and using Lemma~\ref{lemma:convergence of semigroups}. In particular, $H$ is bounded and the assertions of Theorem~\ref{theorem:non conservative}   and Theorem~\ref{theorem:conservative} can be proven with the help of Theorem~\ref{theorem:main result}.  We leave the details to the reader. Since the extended Kato condition of \cite{SV} is technical and not easily verified on manifolds, see e.g. the recent discussion in \cite{Gun}, we chose to state our main results for nonnegative potentials only. 
 \end{remark}

\section{Applications}

In this section we discuss several applications of the generalized conservation property. We show that the generalized conservation property implies Markov uniqueness. On model manifolds we provide a characterization of the generalized conservation property in terms of volume growth and we show that on complete manifolds there always exists a potential making the forms conservative in the generalized sense.

\subsection{Markov uniqueness}

 It is  well known that on a stochastically complete manifold we have $W^1(M) = W^1_0(M)$. In   terms of associated stochastic processes this means that there is only one Brownian motion on $M$. It is also known from Dirichlet form theory that the conservativeness of $Q_{\rho,0}$ implies $W^1(M,\rho) = W^1_0(M,\rho)$ and that in the finite measure case (i.e. when $\rho \in L^1(M,\mu)$) both properties (uniqueness of Brownian motion and $W^1(M,\rho) = W^1_0(M,\rho)$) are equivalent, see e.g. \cite{Kuw,HKLMS}. The following theorem provides an extension to the case when $V$ does not vanish.

 \begin{theorem} \label{theorem:markov uniqueness} Of the following assertions (i) always implies (ii). If $\rho+ V\in L^1(M,\mu)$, then they are equivalent.
 \begin{itemize}
  \item[(i)] $\Qr$ is conservative in the generalized sense.
  \item[(ii)] $W^1(M,\rho + V) = W^1_0(M,\rho + V)$.
 \end{itemize}
 \end{theorem}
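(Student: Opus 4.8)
The plan is to exploit the characterization of conservativeness in the generalized sense provided by Theorem~\ref{theorem:conservative}, in particular the equivalence with statement~(iii) there: for some/any $\alpha > 0$, any bounded $g \in C^\infty(M)$ with $(\alpha - \cL)g = 0$ must vanish. The key is that $W^1(M,\rho+V) = W^1_0(M,\rho+V)$ should be equivalent to triviality of the orthogonal complement of $W^1_0(M,\rho+V)$ inside $W^1(M,\rho+V)$ with respect to the inner product $\Qt(\cdot,\cdot) + \as{\cdot,\cdot}_{\rho\mu}$ (recall $\|f\|_{W^1}^2 = \Qt(f) + \|f\|_2^2$). So first I would identify that orthogonal complement.

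First, for the implication (i) $\Rightarrow$ (ii), I would argue as follows. Let $f \in W^1(M,\rho+V)$ be orthogonal to $W^1_0(M,\rho+V)$ in the $\|\cdot\|_{W^1}$-inner product; it suffices to show $f = 0$. Orthogonality to $C_c^\infty(M) \subseteq W^1_0(M,\rho+V)$ means $\Qt(f,\varphi) + \as{f,\varphi}_{\rho\mu} = 0$ for all $\varphi \in C_c^\infty(M)$, which by the integration-by-parts identity $\Qt(f,\varphi) = \as{-\cL f,\varphi}_{\rho\mu}$ says exactly $(1 - \cL)f = 0$ in the distributional sense. Elliptic local regularity (Lemma~\ref{lemma: local sobolev embeddings}) then gives $f \in C^\infty(M)$. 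The remaining point is that $f$ is bounded: this is where I would use that $f \in W^1(M,\rho+V)$ together with the Dirichlet-form / Silverstein structure — more precisely, $f \wedge 1$ and $f \vee (-1)$ lie in $W^1(M,\rho+V)$, the difference $f - (f \wedge 1 \vee -1)$ lies in $W^1_0(M,\rho+V)$ by the order-ideal Lemma~\ref{lemma:order ideal} (since $|f - (f\wedge 1 \vee -1)| \leq |f|$ fails in general, so I would instead directly show the truncation has the same property, or use that bounded solutions suffice because one can truncate the solution itself). Actually the cleanest route: normal-contraction truncations $f_n := (-n) \vee f \wedge n$ satisfy $f - f_n \in W^1_0(M,\rho+V)$ because $f_n \to f$ in $W^1$ and each $f_n$ differs from $f$ by something in the order ideal; then $f_n$ orthogonal to $W^1_0$ forces $f_n$ to itself be a bounded smooth solution of $(1-\cL)f_n = 0$ up to a $W^1_0$ correction — this needs care. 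Once I have produced a \emph{bounded} smooth $g$ with $(1 - \cL)g = 0$, Theorem~\ref{theorem:conservative}(iii) with $\alpha = 1$ forces $g = 0$, and tracing back gives $f = 0$.

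For the converse under the finite-measure hypothesis $\rho + V \in L^1(M,\mu)$: now the constant function $1$ lies in $L^2(M,\rho\mu)$ (since $\int_M \rho\, d\mu \leq \int_M (\rho+V)\,d\mu < \infty$) and more importantly in $W^1(M,\rho+V)$, because $\nabla 1 = 0 \in \vec L^2(M,\mu)$ and $\|1\|_{W^1}^2 = \int_M (\rho+V)\,d\mu < \infty$. Assuming (ii), we have $1 \in W^1_0(M,\rho+V) = D(\Qr)$, and since $\Qr(1,\varphi) = \as{-\cL 1,\varphi}_{\rho\mu} = \as{\hat V,\varphi}_{\rho\mu}$ for $\varphi \in C_c^\infty(M)$ (as $\Delta 1 = 0$, so $\cL 1 = -V/\rho = -\hat V$), the constant $1$ lies in $D(\Lr)$ with $\Lr 1 = -\hat V$. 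From $\Lr 1 = -\hat V$ one computes, via the spectral calculus / standard semigroup identities, that $\Tr 1 + \int_0^t \Tr_s \hat V\,ds = 1$ in $L^2$; since $\hat V = -\Lr 1$ and $\Tr$ is strongly continuous, $\int_0^t \Tr_s(-\Lr 1)\,ds = -(\Tr 1 - 1) = 1 - \Tr 1$, giving exactly $H_t = 1$, i.e.\ conservativeness in the generalized sense. This is clean provided one is careful that the $L^2$-identity upgrades to the pointwise / $L^+(M)$-identity defining $H_t$, which follows since all quantities are already known to be in $L^\infty$ and the relevant maps are weak-$*$-continuous (Theorem~\ref{theorem:main result} and Lemma~\ref{lemma:weakstarcontinuity}).

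\textbf{Main obstacle.} The delicate step is the boundedness of the orthogonal-complement element $f$ in the proof of (i) $\Rightarrow$ (ii): one must pass from an arbitrary $W^1(M,\rho+V)$-solution of $(1-\cL)f = 0$ to a \emph{bounded} smooth solution in order to apply Theorem~\ref{theorem:conservative}(iii). The tool for this is precisely that $W^1_0(M,\rho+V)$ is an order ideal in $W^1(M,\rho+V)$ (Lemma~\ref{lemma:order ideal}): truncating $f$ by a normal contraction keeps it in $W^1(M,\rho+V)$, the truncation tail lies in the order ideal $W^1_0(M,\rho+V)$, and orthogonality propagates. I expect the bookkeeping here — ensuring the truncated function still satisfies an elliptic equation with the right sign so that the elliptic maximum principle Theorem~\ref{theorem:elliptic maximum principle} or directly Theorem~\ref{theorem:conservative}(iii)/(iv) applies — to be the part requiring the most care; everything else is a routine consequence of the integration-by-parts formula, local elliptic regularity, and the spectral calculus.
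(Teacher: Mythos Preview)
Your plan for (i) $\Rightarrow$ (ii) is the same in spirit as the paper's, but the step you flag as the obstacle is in fact a real gap in your sketch. Starting from an arbitrary $f$ in the $\|\cdot\|_{W^1}$-orthogonal complement of $W^1_0(M,\rho+V)$ and trying to show $f$ is bounded by truncating does not work: for $f_n = (-n)\vee f \wedge n$ there is no element of $W^1_0(M,\rho+V)$ dominating $|f-f_n|$, so Lemma~\ref{lemma:order ideal} gives you nothing, and the claim ``$f-f_n \in W^1_0$'' is unjustified. The paper's trick is to reverse the order: first pick a \emph{bounded} $h \in W^1(M,\rho+V)\setminus W^1_0(M,\rho+V)$ with $\|h\|_\infty\leq 1$ (possible because bounded functions are dense in any Dirichlet form domain), \emph{then} let $h_0$ be its $W^1_0$-projection. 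Orthogonality still gives $(1-\cL)(h-h_0)=0$ and smoothness. For boundedness one takes $\psi_n \in W^1_0$ with $\psi_n \to h_0$ and sets $\varphi_n := h - ((h-\psi_n)\wedge 1)\vee(-1)$; because $\|h\|_\infty\leq 1$ one has $|\varphi_n|\leq |\psi_n|$, so $\varphi_n \in W^1_0$ by the order-ideal lemma, and a Dirichlet-form contraction estimate plus Pythagoras shows $\varphi_n\to h_0$. Hence $h-h_0$ is an $L^2$-limit of functions bounded by $1$, so bounded. The point is that boundedness of $h$ is the input, not the output.

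For (ii) $\Rightarrow$ (i) your direct route has a gap: asserting $\Lr 1 = -\hat V$ requires $\hat V \in L^2(M,\rho\mu)$, i.e.\ $\int_M V^2/\rho\, d\mu<\infty$, which does not follow from $\rho+V\in L^1(M,\mu)$ (take e.g.\ $M=\R$, $\mu$ Lebesgue, $\rho=e^{-x^2}$, $V=e^{-x^2/2}$). This is fixable---one can pair against $g\in D(\Lr)$ and use $\Qr(1,T_s g)=\int_M V\,T_s g\,d\mu$ to get $\langle 1-T_t 1,g\rangle_{\rho\mu} = \langle \int_0^t T_s\hat V\,ds,g\rangle_{\rho\mu}$, then extend by density since both sides are bounded by Theorem~\ref{theorem:main result}---but it needs to be said. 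The paper sidesteps this entirely: from $1\in W^1_0(M,\rho+V)=D(Q_{\rho+V,0})$ with $Q_{\rho+V,0}(1)=0$ one gets recurrence of $Q_{\rho+V,0}$, hence conservativeness of $Q_{\rho+V,0}$, hence conservativeness of $\Qr$ in the generalized sense via Theorem~\ref{theorem:criteria for generalized conservation}(b). Your argument, once repaired, is more self-contained; the paper's is shorter but leans on the time-change characterization.
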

\begin{proof}
(i) $\Rightarrow$ (ii):  We assume that $W^1(M,\rho + V) \neq  W^1_0(M,\rho + V)$ and denote by $\Qt$ the Dirichlet form with domain $W^1(M,\rho  + V)$ that was introduced in Section~\ref{section:maximum principles}. Since essentially bounded functions are dense in the domains of Dirichlet forms, see e.g. \cite[Theorem~1.4.2]{FOT}, there exists an essentially bounded $h \in  W^1(M,\rho + V) \setminus W^1_0(M,\rho + V)$ with $\|h\|_\infty \leq 1$.  Let $h_0$ be the Hilbert space projection of $h$ in $(W^1(M,\rho + V),\|\cdot\|_{W^1})$ onto the closed subspace $W^1_0(M,\rho + V)$. We prove that $h_r := h - h_0$ is essentially bounded and satisfies $(1-\cL)h_r = 0$. It then follows from Lemma~\ref{lemma: local sobolev embeddings} that $h_r$ is smooth. By Theorem~\ref{theorem:non conservative} the form $\Qr$ is not conservative in the generalized sense, a contradiction.

Since $h_0$ is the orthogonal projection onto $W^1_0(M,\rho+V)$, $h_r$ is orthogonal to all $\varphi \in \mathcal{D}(M)$ with respect to the inner product induced by $\|\cdot\|_{W^1}$ on $W^1(M,\rho  + V)$. Hence, for all $\varphi \in \mathcal{D}(M)$ we obtain
$$\int_M \as{\nabla h_r, \nabla \varphi } d\mu + \int_M h_r \varphi V d\mu +\int_M h_r \varphi \rho d\mu = 0. $$
By definition of $\cL$ this is nothing more than saying     $(1-\cL)h_r = 0$ in the sense of distributions.

It remains to prove the boundedness of $h_r$. By the projection theorem in Hilbert spaces it satisfies
$$\|h_r\|_{W^1} = \|h - h_0\|_{W^1} = \inf \{ \|h - \psi\|_{W^1} \mid \psi \in W^1_0(M,\rho + V)\}.$$
Let $\psi_n$ a sequence in $W^1_0(M,\rho + V)$ converging to $h_0$. We prove that 
$$\varphi_n:= h -((h - \psi_n) \wedge 1) \vee(-1)$$
 belongs to $W^1_0(M,\rho + V)$ and converges to $h_0$. From this the claimed boundedness of $h_r = h - h_0$ follows.     
 
 Since $\|\cdot\|^2_{W^1}$  with domain $W^1(M,\rho + V)$ is a Dirichlet form, the functions $\varphi_n$ belong to $W^1(M,\rho + V)$. Moreover, using $\|h\|_\infty \leq 1$ shows $|\varphi_n| \leq |\psi_n|$. Since $\psi_n \in W^1_0(M,\rho + V)$ and  $W^1_0(M,\rho + V)$ is an order ideal in $W^1(M,\rho + V)$, see Lemma~\ref{lemma:order ideal}, we obtain $\varphi_n \in W^1_0(M,\rho + V)$. Using again that $\|\cdot\|^2_{W^1}$  is a Dirichlet form we arrive at
 $$\|h - \psi_n\|^2_{W^1}\geq \|((h - \psi_n) \wedge 1) \vee(-1)\|^2_{W^1} =\|h -  \varphi_n\|^2_{W^1} = \|h_0 - \varphi_n\|^2 + \|h - h_0\|^2_{W^1}.$$
 This shows $\varphi_n \to h_0$ and finishes the proof of (i) $\Rightarrow$ (ii).
 
 Now assume $\rho + V \in L^1(M,\mu)$.  (ii) $\Rightarrow$ (i):  Due to    $\rho + V \in L^1(M,\mu)$  we have $1 \in   W^1(M,\rho + V) = W^1_0(M,\rho + V)$.  This implies that the Dirichlet form $Q_{\rho + V,0}$ on $L^2(M,\rho + V)$ is recurrent, see \cite[Theorem~1.6.3]{FOT}.  Moreover, it is well known that recurrence implies conservativeness, see e.g. \cite[Theorem~1.6.5 and Theorem~1.6.6]{FOT}, i.e., $Q_{\rho + V,0}$ is conservative. According to Theorem~\ref{theorem:criteria for generalized conservation}    this shows that $Q_{\rho,V}$ is conservative in the generalized sense.
\end{proof}

 \begin{remark}
 \begin{enumerate}
   \item  An abstract version of this theorem for general Dirichlet forms is \cite[Corollary 4.58]{Schmi}. The above proof is basically the one in \cite{Schmi} adapted to our situation.
   
  \item It follows with arguments involving domination of quadratic forms that the equality  $W^1(M,\rho) = W^1_0(M,\rho)$ always implies $W^1(M,\rho + V) = W^1_0(M,\rho + V)$ for nonnegative $V$, see \cite{LSW}. In particular, if $Q_{\rho,0}$ is conservative we have $W^1(M,\rho + V) = W^1_0(M,\rho + V)$. However, we would like to point out that our theorem still strengthens the known criteria for $W^1(M,\rho + V) = W^1_0(M,\rho + V)$  since there are manifolds and weights for which $T_t^{\rho,0}$ is not conservative but $T_t^{\rho,V}$ is conservative in the generalized sense, see Corollary \ref{corollary:model manifold} below.
 
 \end{enumerate}

 \end{remark}

 \subsection{Model manifolds} \label{subsection:model manifolds} In this subsection we characterize the generalized conservation criterion for model manifolds.
 
 Let $E =  (0,\infty) \times \mathbb{S}^{n-1} \cup \{0\}$ be equipped with the topology making the coordinate map $\R^n \to E, x \mapsto (|x|, |x|^{-1}x)$ if $x \neq 0$ and $0 \mapsto 0$ a homeomorphism.   A Riemannian manifold $(M,g)$ is called   $n$-dimensional model manifold (of infinite radius) if $M = E$ and  for each $(r,\theta) \in (0,\infty) \times \mathbb{S}^{n-1}$ on the tangent space $T_{(r,\theta)}M = T_r(0,\infty) \oplus T_\theta \mathbb{S}^{n-1}$ the metric takes the form 
  $$g = (dr)^2 + \sigma(r)^2 (d\theta)^2,$$
 where $\sigma:(0,\infty) \to (0,\infty)$  is smooth and $(dr)^2, (d\theta)^2$ denote the standard metrics on $T_r(0,\infty)$ respectively $T_\theta \mathbb{S}^{n-1}$. In this case, the function $\sigma$ is called the {\em scaling function} of the model.

Let $(M,g)$ be a model manifold and let $d$ be the induced geodesic distance on $M$. We call a function $f:M \to \R$   radially symmetric if $f(x) = f(y)$ for all $x,y \in M$ with $d(x,0) = d(y,0)$. In this case there exists a function $\tilde f:[0,\infty) \to \R$ such that $f(x) = \tilde f(r)$ whenever $d(x,0) = r$. In what follows we abuse notation and simply write $f(r)$ instead of $\tilde{f}(r)$. 

For simplicity  we also assume $\mu = \vo$, i.e.,   $\Psi  = 1$. If $f$ is smooth and radially symmetric, so is $\Delta  f = \mathrm{div} (\nabla f)$ and for $r > 0$ it takes the form
$$\Delta f(r) = f''(r) + (n-1)\frac{\sigma'(r)}{\sigma(r)} f'(r),$$
see e.g. \cite{Gri}. If, moreover,  $\rho$ and $V$ are radially symmetric, so is $\cL f$ and for $r>0$ it is given by
$$\cL f(r) = \rho(r)^{-1} f''(r) + \rho(r)^{-1} (n-1)\frac{\sigma'(r)}{\sigma(r)} f'(r)- \rho(r)^{-1} V(r) f(r).$$

From now on we assume that $\rho$ is radially symmetric. For $r \geq 0$ we denote by $v_{\rho}(r) =  \int_{B_r(0)} \rho d \vo$ the  volume of the $d$-ball of radius $r$ around $0$ with respect to the measure $\rho \vo$. It can be computed as
$$v(r) = \int_0^r  \int_{\mathbb{S}^{n-1}} \rho(\xi) \sigma^{n-1}(\xi)  d\xi d\theta = \omega_{n} \int_0^r \rho(\xi) \sigma^{n-1}(\xi) d\xi,$$
where $\omega_{n}$ is the surface area of $\mathbb{S}^{n-1}$ (the volume of the unit sphere in $\R^n$), see e.g. \cite{Gri}. The quantity $s(r) = \omega_n  \sigma^{n-1}(r)$ is the surface area of the sphere of radius $r$ in the  manifold $(M,g)$. We obtain the following characterization of the generalized conservation property on model manifolds.

\begin{theorem}\label{theorem:volume growth model manifold}
 Let $(M,g)$ be a model manifold with radially symmetric $\rho$ and $V$ and let $\mu = \vo$. The following assertions are equivalent.
 \begin{enumerate}[(i)]
  \item  The Dirichlet form $\Qr$ on $L^2(M,\rho \vo)$ is conservative in the generalized sense.
  \item There exists $a > 0 $ such that
 $$\int_a^\infty \frac{v_{\rho + V}(r)}{s(r)} dr = \int_a^\infty \frac{v_\rho(r)}{s(r)} dr + \int_a^\infty \frac{\int_0^r s(\xi)V(\xi) d\xi}{s(r)}dr = \infty.$$
 \end{enumerate}
\end{theorem}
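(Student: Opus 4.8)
The plan is to reduce Theorem~\ref{theorem:volume growth model manifold} to a known volume-growth criterion for conservativeness of a weighted-Laplacian Dirichlet form on a model manifold, using the time-change identity already established in Theorem~\ref{theorem:criteria for generalized conservation}~(b). That theorem says $\Qr$ on $L^2(M,\rho\vo)$ is conservative in the generalized sense if and only if $Q_{\rho+V,0}$ on $L^2(M,(\rho+V)\vo)$ is conservative. So the whole statement becomes: $Q_{\rho+V,0}$ is conservative $\iff$ the stated integral diverges. Since $\rho$, $V$, and $\sigma$ are all radially symmetric, $\rho+V$ is a radially symmetric density, and $Q_{\rho+V,0}$ is precisely the Dirichlet form of the weighted Laplacian $\Delta_{(\rho+V)\vo}$ on the model manifold $(M,g)$ with weight $\rho+V$. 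The classical Grigor'yan-type volume test for stochastic completeness of weighted model manifolds (see \cite{Gri,Gri2,Stu1}) states that such a form is conservative if and only if
$$\int_a^\infty \frac{\text{``volume of ball of radius }r\text{''}}{\text{``capacity density at radius }r\text{''}}\, dr = \infty,$$
where the denominator is the weighted surface area. Here the measure is $(\rho+V)\vo$, whose ball volume is $v_{\rho+V}(r)$, and—this is the subtle point—the ``surface measure'' entering the conductance is still $s(r)=\omega_n\sigma^{n-1}(r)$, \emph{not} $(\rho+V)(r)s(r)$, because the gradient term in $Q_{\rho+V,0}$ is integrated against $\mu=\vo$ and not against $(\rho+V)\vo$; the weight $\rho+V$ only enters the zero-order/measure part of the form. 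This asymmetry between the measure in the $L^2$-space and the measure in the energy integral is exactly why the criterion reads with $v_{\rho+V}$ upstairs and $s$ downstairs.

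Concretely I would proceed as follows. First, record that by Theorem~\ref{theorem:criteria for generalized conservation}~(b) it suffices to characterize conservativeness of $Q_{\rho+V,0}$ on $L^2(M,(\rho+V)\vo)$. Second, compute the relevant one-dimensional objects: in radial coordinates the associated operator $\mathcal{L}_{\rho+V,0}$ acts on radial functions by
$$\mathcal{L}_{\rho+V,0} f(r) = \frac{1}{(\rho+V)(r)}\left( f''(r) + (n-1)\frac{\sigma'(r)}{\sigma(r)} f'(r)\right) = \frac{1}{(\rho+V)(r)\,s(r)}\,\bigl(s(r) f'(r)\bigr)',$$
using $s(r)=\omega_n\sigma^{n-1}(r)$ and $s'(r)/s(r)=(n-1)\sigma'(r)/\sigma(r)$. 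So $Q_{\rho+V,0}$ on radial functions is the one-dimensional Dirichlet form with energy measure $s(r)\,dr$ and reference measure $(\rho+V)(r)s(r)\,dr$ on $(0,\infty)$. Third, invoke (or re-derive via the elliptic maximum principle Theorem~\ref{theorem:elliptic maximum principle} together with Theorem~\ref{theorem:conservative}~(iv)) the standard ODE test: such a form is conservative iff the bounded positive solutions of $(1-\mathcal{L}_{\rho+V,0})g=0$, i.e. of $(s g')' = (\rho+V)s\,g$, are forced to be zero, and by a Wronskian/comparison argument this fails iff
$$\int_a^\infty \frac{1}{s(r)}\left(\int_0^r (\rho+V)(\xi)s(\xi)\,d\xi\right) dr < \infty.$$
Finally, observe $\int_0^r (\rho+V)(\xi)s(\xi)\,d\xi = \int_0^r \rho(\xi)s(\xi)\,d\xi + \int_0^r V(\xi)s(\xi)\,d\xi = v_\rho(r) + \int_0^r s(\xi)V(\xi)\,d\xi = v_{\rho+V}(r)$, which matches the three displayed quantities in (ii) and so completes the equivalence.

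The genuinely delicate step is justifying the ODE/volume test itself in this weighted setting, namely that conservativeness of $Q_{\rho+V,0}$ is equivalent to divergence of $\int_a^\infty v_{\rho+V}(r)/s(r)\,dr$. One must be careful that the criterion uses the energy weight $s$ in the denominator and the full weighted volume $v_{\rho+V}$ in the numerator; a naive appeal to the unweighted model-manifold theorem in \cite{Gri} with the metric alone would give the wrong integrand. The cleanest rigorous route is to reduce, as in \cite[Section 3]{Gri2} or via \cite{Stu1}, to the one-dimensional problem on $(0,\infty)$ with the two measures $s\,dr$ (energy) and $(\rho+V)s\,dr$ (space), for which conservativeness is classically characterized by the stated integral; alternatively one can give a self-contained proof using Theorem~\ref{theorem:conservative}~(iv): a bounded nonnegative radial $g$ with $(1-\mathcal{L}_{\rho+V,0})g\le 0$ exists iff the integral converges, by constructing an explicit supersolution $g(r)=1-c\int_a^r s(\xi)^{-1}\int_0^\xi(\rho+V)s\,d\eta\,d\xi$ when the integral converges, and conversely using the maximum principle and an integration-by-parts/Gronwall argument to rule out such $g$ when it diverges. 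I would also note one minor reduction: a priori Theorem~\ref{theorem:conservative} quantifies over all bounded smooth solutions, not just radial ones, so one should remark that by uniqueness of bounded solutions on the rotationally symmetric manifold (or by averaging over $\mathbb{S}^{n-1}$, which preserves the relevant (in)equalities) it is enough to test radial functions.
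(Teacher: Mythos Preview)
Your reduction via Theorem~\ref{theorem:criteria for generalized conservation}~(b) and your treatment of the direction (ii)$\Rightarrow$(i) are essentially what the paper does: the paper works directly with $f = 1 - N_1$ for $\cL$ (radially symmetric by the rotational-invariance argument you sketch) rather than first passing to $Q_{\rho+V,0}$, but the resulting ODE $(sf')' = (\rho+V)sf$ is identical, and the integral identity of Lemma~\ref{lemma:model manifolds} together with $f(0)>0$ forces $f(r)\to\infty$ when the integral diverges, contradicting boundedness.

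For (i)$\Rightarrow$(ii) you diverge from the paper, and your explicit candidate does not do the job. With $g(r) = 1 - c\int_a^r s(\xi)^{-1}\int_0^\xi (\rho+V)s\,d\eta\,d\xi$ one gets $(sg')' = -c(\rho+V)s$ on $(a,\infty)$, hence $\mathcal{L}_{\rho+V,0}g = -c$ and $(1-\mathcal{L}_{\rho+V,0})g = g + c > 0$; this has the wrong sign for Theorem~\ref{theorem:conservative}~(iv), which requires $(\alpha - \mathcal{L})g \le 0$. Producing a bounded positive solution of the eigenvalue equation when the integral converges genuinely needs a Picard-iteration/Feller-test argument rather than a one-line formula, and the weighted volume test you want to cite (energy weight $s\,dr$ versus reference measure $(\rho+V)s\,dr$) is not directly available in \cite{Gri,Gri2,Stu1}; in particular Sturm's result only yields the sufficient direction. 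The paper handles this implication by a different device: it applies \cite[Theorem~7.5]{HKLMS}, a non-conservativeness criterion asking for an $f$ with suitable integrability and $\int_M \Delta f\, d\vo \neq 0$, and builds such an $f$ by smoothly extending $g(r) = \int_r^\infty s(\xi)^{-1}\,d\xi$, which is finite, $\Delta$-harmonic for $r>1/2$, and lies in the required $L^1$ and $L^2$ spaces precisely because $\int_a^\infty v_{\rho+V}/s < \infty$ (via Fubini).
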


\begin{remark}
\begin{enumerate}
 \item Weakly spherically symmetric graphs are discrete analogues of model manifolds. A version of the theorem for weakly spherically symmetric graphs is contained in \cite{KLW}. 
 \item Since $\Qr$ on $L^2(M,\rho \vo)$ is conservative in the generalized sense if and only if $Q_{\rho+V,0}$ on $L^2(M,(\rho + V) \vo)$ is conservative, the theorem is an extension of the well known characterization of stochastic completeness of model manifolds (the case $\rho = 1, V = 0$) in terms of their volume growth, see e.g. \cite[Proposition 3.2]{Gri1}. More precisely, it treats the case when the measure used in the integrals in the definition of the form $Q_{\rho +V,0}$ (in this case $\vo$) and  the measure of the underlying $L^2$-space (in this case $(\rho + V) \vo$) are different.
\end{enumerate}
\end{remark}

Before proving the theorem we  note the following elementary lemma on radially symmetric smooth solutions to the eigenvalue equation. We include a proof for the convenience of the reader.

\begin{lemma}\label{lemma:model manifolds}
 Let $(M,g)$ be a model manifold with radially symmetric $\rho$ and $V$ and let $\mu = \vo$. Moreover, let $f:M \to [0,\infty)$ be a smooth  radially symmetric solution to the equation $(1 - \cL) f = 0$. For all $r > r' \geq 0$ it satisfies the identity
 $$f(r) = f(r') + \int_{r'}^r \frac{1}{s(\eta)}  \int_0^\eta s(\xi)(\rho(\xi) + V(\xi)) f(\xi) d\xi d\eta. $$
 In particular, the function  $[0,\infty) \to  [0,\infty), r \mapsto f(r)$ is monotone increasing.
\end{lemma}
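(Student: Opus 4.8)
The plan is to turn the PDE $(1-\cL)f = 0$ into a one-dimensional Sturm--Liouville equation and integrate it twice. First I would use that $(1-\cL)f = 0$ is the same as $\cL f = f$; since $f$, $\rho$ and $V$ are all radially symmetric, plugging into the radial formula for $\cL$ recorded above and multiplying by $\rho(r) > 0$ gives the ordinary differential equation
\[
 f''(r) + (n-1)\frac{\sigma'(r)}{\sigma(r)}\, f'(r) = (\rho(r) + V(r))\, f(r), \qquad r > 0 .
\]

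Next I would bring the left-hand side into divergence form. Because $s(r) = \omega_n \sigma^{n-1}(r)$ we have $s'(r)/s(r) = (n-1)\sigma'(r)/\sigma(r)$, so multiplying the displayed equation by $s(r)$ turns it into
\[
 \bigl(s(r) f'(r)\bigr)' = s(r)\,(\rho(r) + V(r))\, f(r), \qquad r > 0 .
\]
I would then integrate this identity over $(0,\eta)$. Since $f$ is smooth on $M$ and radially symmetric, the one-variable profile $r \mapsto f(r)$ extends to a smooth even function at the origin, so in particular $f'(0) = 0$ and the boundary term $s(r)f'(r)$ vanishes as $r \to 0+$ (alternatively, $s(0) = \omega_n \sigma^{n-1}(0) = 0$ since $\sigma(0) = 0$). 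Hence
\[
 s(\eta) f'(\eta) = \int_0^\eta s(\xi)\,(\rho(\xi) + V(\xi))\, f(\xi)\, d\xi, \qquad \eta > 0 .
\]
Dividing by $s(\eta) > 0$ and integrating once more, now over $(r',r)$ with $r > r' \geq 0$, the fundamental theorem of calculus yields exactly the claimed identity; note that when $r' = 0$ the outer integral is still finite because its integrand equals $f'(\eta)$, which extends continuously to $\eta = 0$.

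For the monotonicity statement, the integrand $s(\xi)(\rho(\xi)+V(\xi))f(\xi)$ in the last display is nonnegative, since $s > 0$, $\rho > 0$, $V \geq 0$ and $f \geq 0$; therefore $f'(\eta) \geq 0$ for every $\eta > 0$, so $r \mapsto f(r)$ is monotone increasing. The only point in the argument requiring a little care is the vanishing of the boundary term $s(r)f'(r)$ at $r = 0$, i.e.\ the behaviour of a smooth radial function at the center of the model; this is handled by the standard description of smooth functions in geodesic polar coordinates, and everything else is routine one-variable calculus.
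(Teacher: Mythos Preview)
Your argument is correct and follows essentially the same route as the paper: rewrite the equation as $f'' + (s'/s)f' = (\rho+V)f$, multiply by $s$ to get $(sf')' = s(\rho+V)f$, integrate from $0$ to $\eta$ using $s(0+)=0$ to kill the boundary term, then divide by $s$ and integrate again from $r'$ to $r$. Your added remarks on the behaviour at the origin (evenness of the radial profile, finiteness of the outer integral when $r'=0$) are extra care beyond what the paper spells out, but the underlying proof is the same.
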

\begin{proof}
 We use the discussed formulas for $\cL f$ for radially symmetric functions and the identity $(n-1) \sigma'/\sigma = s'/s$ to obtain that $f$ satisfies
 $$f'' + \frac{s'}{s} f' = (\rho + V) f.$$
 Multiplying this by $s$, integrating it from $0$ to  $\eta > 0$ and using $s(0+) = \omega_n \sigma^{n-1}(0+) = 0$ yields
 $$(sf')(\eta) = \int_0^\eta s(\xi)(\rho(\xi) + V(\xi)) f(\xi) d\xi.$$
 Dividing by $s$ and integrating from $r'$ to $r$ yields
 $$f(r) = f(r') + \int_{r'}^r \frac{1}{s(\eta)}  \int_0^\eta s(\xi)(\rho(\xi) + V(\xi)) f(\xi) d\xi d\eta.  $$
 The ``In particular''-part follows from the fact that all integrands are nonnegative.
\end{proof}

\begin{proof}[Proof of Theorem~\ref{theorem:volume growth model manifold}]

 (i) $\Rightarrow$ (ii): We assume 
 $$\int_1^\infty \frac{v_{\rho + V}(r)}{s(r)} dr < \infty$$
 and use \cite[Theorem~7.5]{HKLMS} to show that the Dirichlet form $Q_{\rho + V,0}$ on $L^2(M,(\rho + V)\vo)$ is not conservative. According to Theorem~\ref{theorem:criteria for generalized conservation} this implies that $\Qr$ is not conservative in the generalized sense. Strictly speaking  \cite[Theorem~7.5]{HKLMS} can only be applied to the form   $Q_{1,0}$, i.e., the case when $\rho + V = 1$. However, the  theory developed in \cite{HKLMS} treats arbitrary Dirichlet forms  and \cite[Theorem~7.5]{HKLMS}is an application of this more general frameworkt to manifolds. It   also works when $\rho + V \neq 1$.
 
 According to \cite[Theorem~7.5]{HKLMS}  it suffices to prove the existence of a function $f \in L^1(M,(\rho + V) \vo) \cap L^2(M,(\rho + V) \vo)$ with $|\nabla f| \in L^2(M,\vo)$ and $\mathcal{L}_{\rho + V,0} f \in L^1(M,(\rho + V) \vo) \cap L^2(M,(\rho + V) \vo)$ such that
 $$\int_{M} \mathcal{L}_{\rho + V,0} f (V + \rho) d \vo = \int_{M} \Delta f d \vo \neq 0.$$
 We consider the radially symmetric  function $g$ on $M \setminus B_{1/2}(0)$ given by
 $$g(r) = \int_r^\infty \frac{1}{s(\xi)} d\xi, \quad r > \frac{1}{2}.$$
  Fubini's theorem and our volume growth condition imply   $g \in L^1(M\setminus B_{1/2}(0),(\rho + V) \vo)$.  Since $g$ is monotone decreasing with $g(r) \to 0,$ as $r \to \infty$, it also belongs to $L^2(M \setminus B_{1/2}(0),(\rho + V) \vo)$.  In particular, $g$ is finite and smooth and satisfies
  $$\Delta g(r) = g''(r) - \frac{s'(r)}{s(r)} g'(r) = 0, \quad r >\frac{1}{2}.$$
  Moreover, since $M$ is a model manifold, we have $|\nabla g| (r) = |g'(r)|$ so that
  $$\int_{M\setminus B_{1}(0)} |\nabla g|^2 d \vo =   \int_1^\infty s(r) \frac{1}{s(r)^2} dr =g(1)  < \infty.$$

   Let now $f$ be a smooth radially symmetric function on $M$ such that $f = g$ on $M\setminus B_{1/2}(0)$. The desired integrability properties of $f$ directly follow from properties of $g$ and so it remains to prove the statement on $\Delta f$. 
  Using the identities $\Delta f = s^{-1} (s f')'$ and $\Delta f(r) = 0$ for $r > 1$ we compute 
  $$ \int_M \Delta f d\vo = \int_0^\infty s(r) \Delta f (r) dr = \int_0^1 s(r) \Delta f(r) dr = f'(1) s(1) - f'(0+)s(0+) = -1. $$
  This proves (ii).
 
 (ii) $\Rightarrow$ (i):  We consider the function $f = 1 - N_1 = 1 - G^{\rho,V}_1 1 - G^{\rho,V}_1 (V / \rho)$. According to Theorem~\ref{theorem:main result} it is smooth, satisfies $0 \leq f \leq 1$ and $(1 - \cL)f = 0$.  Moreover, since $V$, $\rho$  and the constant function $1$ are  radially symmetric, it follows from routine arguments that $f$ is radially symmetric (use that the form $\Qr$ is invariant with respect to rotations in the $\mathbb{S}^{n-1}$-variable and that this invariance yields an invariance for the resolvent; for the operator theoretic details see e.g. \cite[Appendix~A]{KLW}).  Assume now that (i) does not hold. By Theorem~\ref{theorem:main result} this implies $f(x) > 0$ for all $x \in M$ and hence $f(r) > 0$ for all $r \geq 0$. According to Lemma~\ref{lemma:model manifolds} $[0,\infty) \to (0,\infty), r \mapsto f(r)$ is monotone increasing. Hence, the integral formula of  Lemma~\ref{lemma:model manifolds} implies
 $$f(r) \geq f(0) + f(0)  \int_{0}^r \frac{1}{s(\eta)}  \int_0^\eta s(\xi)(\rho(\xi) + V(\xi))  d\xi d\eta = f(0) + f(0) \int_0^r \frac{v_{\rho + V}(\eta)}{s(\eta)} d\eta. $$
 With (ii) this shows $f(r) \to \infty$, as $r \to \infty$, a contradiction to the boundedness of $f$.
\end{proof}

\begin{corollary}\label{corollary:model manifold}
Let $(M,g)$ be a model manifold with radially symmetric $\rho$ and let $\mu = \vo$. There exists a radially symmetric $V$ such that $\Qr$ is conservative in the generalized sense.
\end{corollary}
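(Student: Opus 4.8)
The plan is to reduce the statement to the volume-growth criterion of Theorem~\ref{theorem:volume growth model manifold} and then to exhibit a radially symmetric potential whose volume integral diverges. By that theorem it suffices to produce a smooth radially symmetric $V\ge 0$ and some $a>0$ with $\int_a^\infty v_{\rho+V}(r)/s(r)\,dr=\infty$; and since $v_{\rho+V}(r)=v_\rho(r)+\int_0^r s(\xi)V(\xi)\,d\xi$ with $v_\rho\ge 0$, it is enough to arrange
$$\int_a^\infty \frac{1}{s(r)}\left(\int_0^r s(\xi)V(\xi)\,d\xi\right)dr=\infty .$$
So the density $\rho$ plays no role and I only have to make $V$ large enough at infinity in this integrated sense.

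The key device is to prescribe the ``mass function'' $g(r):=\int_0^r s(\xi)V(\xi)\,d\xi$ rather than $V$ itself. Any smooth nondecreasing $g\colon[0,\infty)\to[0,\infty)$ that vanishes on a neighbourhood of the origin arises in this way from the radially symmetric function $V:=g'/s$: this $V$ is smooth on all of $M$ — the condition $g'\equiv 0$ near $0$ removes the potential singularity of $1/s$ there, while $s=\omega_n\sigma^{n-1}$ is smooth and strictly positive on $(0,\infty)$ — it is nonnegative because $g'\ge 0$, it is radially symmetric, and $\int_0^r s(\xi)V(\xi)\,d\xi=\int_0^r g'(\xi)\,d\xi=g(r)$. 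Hence the problem reduces to producing a smooth nondecreasing $g$, vanishing near $0$, with $\int_a^\infty g(r)/s(r)\,dr=\infty$.

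For this I would take $g$ to be a smoothed, cut-off version of the running maximum of $s$. Concretely: the map $r\mapsto\max_{\xi\in[2,r]}s(\xi)$ is continuous and nondecreasing on $[2,\infty)$; extend it to a continuous nondecreasing function $h$ on $\R$ that vanishes on $(-\infty,1]$, and set $g:=h*\eta$ for a nonnegative mollifier $\eta$ supported in $[-1/2,0]$. Then $g$ is smooth, nondecreasing (one-sided mollification of a monotone function with a nonnegative kernel is monotone), vanishes on $[0,1/2]$, and satisfies $g(r)\ge h(r)\ge s(r)$ for all $r\ge 2$ (again because $\eta$ is one-sided and $h$ is nondecreasing, $g(r)=\int h(r-y)\eta(y)\,dy\ge h(r)$). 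Consequently $g(r)/s(r)\ge 1$ on $[2,\infty)$, so $\int_2^\infty g/s=\infty$, and taking $V=g'/s$ with $a=2$ gives $\int_2^\infty v_{\rho+V}(r)/s(r)\,dr=\infty$. Theorem~\ref{theorem:volume growth model manifold} then yields that $\Qr$ is conservative in the generalized sense.

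The only mildly technical point is the elementary construction of $g$ — the continuity and monotonicity of the running maximum, and the fact that one-sided mollification preserves monotonicity and only raises the pointwise value — but this is routine. I do not expect any real obstacle: morally, once the volume-growth characterization of Theorem~\ref{theorem:volume growth model manifold} is available, a ``sufficiently large'' radially symmetric potential always forces the generalized conservation property.
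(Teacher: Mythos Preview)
Your proposal is correct and follows the same strategy as the paper: reduce to the volume-growth criterion of Theorem~\ref{theorem:volume growth model manifold} and construct a radially symmetric $V\ge 0$, vanishing near the origin, for which $\int_a^\infty \bigl(\int_0^r s(\xi)V(\xi)\,d\xi\bigr)/s(r)\,dr=\infty$. The paper simply asserts the existence of such a profile, whereas you supply an explicit construction via the running-maximum/mollification device for the mass function $g$; this is a nice refinement but not a different route.
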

\begin{proof}
 Choose a smooth function $f:[0,\infty) \to [0,\infty)$ such that $f = 0$ on a neighborhood of $0$ and 
 $$\int_1^\infty \frac{\int_0^r s(\xi)f(\xi) d\xi}{s(r)}dr = \infty.$$
 Since the metric $d$ is smooth off the diagonal, the function $V:M \to [0,\infty)$ defined by $V(x) = f(d(0,x))$ is smooth and radially symmetric. With this choice of $V$ it follows from Theorem~\ref{theorem:volume growth model manifold} that $\Qr$ is conservative in the generalized sense.
\end{proof}

\subsection{Large potentials on complete manifolds}\label{subsection:large potentials}
 
 In this subsection we show that on a  complete weighted manifold $(M,g,\mu)$ we can choose a smooth potential $V \geq 0$ such that $Q_{1,V}$ is conservative in the genralized sense. Our proof is based on a volume growth criterion for stochastic completeness for strongly local regular Dirichlet forms from \cite{Stu1}. We refer to this reference for the terminology used in this subsection.
 
 We write $W^1_{\rm loc}(M)$ for the local first order Sobolev space, i.e., $f \in W^1_{\rm loc}(M)$ iff for every relatively compact open $\Omega \subseteq M$ there exists $\varphi \in W^1_0(M,\rho)$ (or equivalently $\varphi \in W^1(M,\rho)$)  with $f = \varphi$ on $\Omega$. Since $\rho$ is assumed to be smooth (and hence bounded and strictly positive on a neighborhood of every relatively compact open set), this definition is independent of $\rho$.

 The quadratic form $Q_{\rho,0}$ is a strongly local regular Dirichlet form on $L^2(M,\rho \mu)$ and its {\em local domain} is given by $W^1_{\rm loc}(M)$. If we denote by $\Gamma(f) = |\nabla f|^2 \mu$ the {\em energy measure} of a function $f \in W^1_{\rm loc}(M)$, then for $f \in D(Q_{\rho,0}) = W^1_0(M,\rho)$ we have
 $$Q_{\rho,0}(f) = \int_M d\Gamma (f).$$
 The {\em intrinsic metric} associated with   $Q_{\rho,0}$ on $L^2(M,\rho \mu)$ is defined by
 \begin{align*}
  d_\rho(x,y) =& \sup\{ |f(x) - f(y)| \mid f \in W^1_{\rm loc}(M) \cap C(M) \text{ with } d\Gamma(f) \leq \rho d\mu  \}\\
  =& \sup\{ |f(x) - f(y)| \mid f \in W^1_{\rm loc}(M) \cap C(M) \text{ with } |\nabla f|^2 \leq  \rho   \}.
 \end{align*}
 Rademacher's theorem implies that for $\rho = 1$ the metric $d_1$ is the geodesic distance on $(M,g)$ and that for general $\rho$ the function $d_\rho$ is a metric, which induces the original topology on $M$. 
 
 We fix some $o \in M$ and let $B^{\rho}_r = \{x \in M \mid d_\rho(x,o) \leq r\}$. Moreover, we denote the volume of $B_r^\rho$ with respet to $\rho \mu$ by
 $$\nu_\rho(r) = (\rho\mu)(B^\rho_r) =  \int_{B^{\rho}_r} \rho d\mu.$$
We recall \cite[Theorem~4]{Stu1}, which in our situation takes the following form.
 \begin{lemma}[Sturm's volume growth test for conservativeness]\label{lemma:sturms test}
  Suppose  that open balls with respect to $d_\rho$ are relatively compact. Then 
 $$\int_1^\infty \frac{r}{\max\{\log \nu_\rho(r),1\} } dr = \infty $$
  implies that $Q_{\rho,0}$ is conservative.
 \end{lemma}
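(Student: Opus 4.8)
The plan is to obtain Lemma~\ref{lemma:sturms test} as a specialization of Sturm's conservativeness test \cite[Theorem~4]{Stu1}, so the proof is essentially a verification that the Dirichlet form $Q_{\rho,0}$ on $L^2(M,\rho\mu)$ fits into the framework of that reference. First I would record that $Q_{\rho,0}$ is a strongly local regular Dirichlet form: it is the closure of $f \mapsto \int_M |\nabla f|^2\,d\mu$ on $C_c^\infty(M)$, which is local because $|\nabla(f+g)|^2 = |\nabla f|^2$ whenever $g$ is constant on a neighbourhood of $\mathrm{supp}\, f$, and regular because $C_c^\infty(M)$ is simultaneously a form core and uniformly dense in $C_c(M)$. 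Its energy measure is $\Gamma(f) = |\nabla f|^2\mu$, and, as recalled in the text above, the intrinsic metric it induces is precisely $d_\rho$, which moreover generates the manifold topology — a point that rests on Rademacher's theorem together with the local two-sided boundedness of the smooth, strictly positive density $\rho$.

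Next I would match the standing hypotheses of \cite[Theorem~4]{Stu1}: a locally compact, separable metric measure space with a Radon reference measure of full support, a strongly local regular Dirichlet form whose intrinsic metric generates the topology, and completeness of the metric space — equivalently, relative compactness of metric balls. In our situation $M$ is $\sigma$-compact, $\rho\mu$ is Radon with full support since $\rho$ is strictly positive and smooth, and the hypothesis of the lemma is exactly that the open $d_\rho$-balls are relatively compact; a Hopf--Rinow type argument upgrades this to completeness of $(M,d_\rho)$ as a length space. With all hypotheses in place, \cite[Theorem~4]{Stu1} yields that divergence of $\int_1^\infty r\,(\log \nu_\rho(r))^{-1}\,dr$, understood for $r$ large enough that $\nu_\rho(r) > 1$, implies that $Q_{\rho,0}$ is conservative.

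It then remains to reconcile the precise shape of the integral. The truncation $\max\{\log\nu_\rho(r),1\}$ appears only so that the integrand is defined for all $r \ge 1$, including the range where $\nu_\rho(r) \le e$; since $\max\{\log\nu_\rho(r),1\} \ge \log\nu_\rho(r)$ wherever the right-hand side is positive, divergence of $\int_1^\infty r\,(\max\{\log\nu_\rho(r),1\})^{-1}\,dr$ forces divergence of Sturm's integral whenever $\nu_\rho$ is unbounded, and one concludes as above. If instead $\nu_\rho$ is bounded, then $(\rho\mu)(M) < \infty$; in that degenerate case the truncated integral diverges automatically, and conservativeness follows from the finite-measure case of \cite[Theorem~4]{Stu1} (equivalently, from the recurrence of a strongly local form of finite total mass with relatively compact intrinsic balls). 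I do not expect a genuine difficulty here: the only real work is the bookkeeping needed to confirm that the abstract hypotheses of \cite[Theorem~4]{Stu1} are met in the present smooth, weighted setting, and that is the step where I would be most careful.
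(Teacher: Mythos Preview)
Your proposal is correct and matches the paper's approach exactly: the paper does not give an independent proof of this lemma but simply records it as a restatement of \cite[Theorem~4]{Stu1} in the present setting, and your proposal is precisely the verification that the hypotheses of that theorem are met for $Q_{\rho,0}$ on $L^2(M,\rho\mu)$. Your additional bookkeeping on the truncation $\max\{\log\nu_\rho(r),1\}$ and the finite-measure case is more careful than what the paper spells out, but there is no substantive difference.
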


 With this at hand, we can now state and proof the main result of this section.
 
 \begin{theorem}
     If open balls with respect to $d_\rho$ are relatively compact, there exists a smooth potential $V \geq 0$ such that $\Qr$ is conservative in the generalized sense. 
     
     In particular, the assertion holds if the weighted manifold is complete and $\inf_{x \in M} \rho(x) >0$.
 \end{theorem}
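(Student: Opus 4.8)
The plan is to use Theorem~\ref{theorem:criteria for generalized conservation}(b) to reduce the claim to finding a smooth $V \ge 0$ for which $Q_{\rho+V,0}$ on $L^2(M,(\rho+V)\mu)$ is conservative, and then to construct such a $V$ so that Lemma~\ref{lemma:sturms test} applies to the density $\rho+V$; that is, so that open balls for the intrinsic metric $d_{\rho+V}$ are relatively compact and $\int_1^\infty r/\max\{\log\nu_{\rho+V}(r),1\}\,dr = \infty$. The idea is to let $V$ grow very rapidly at infinity. This has two opposing effects: it enlarges $d_{\rho+V}$, which makes the intrinsic balls relatively compact and keeps their $(\rho+V)\mu$-measure $\nu_{\rho+V}(r)$ bounded over long ranges of $r$, but it also fattens the measure $(\rho+V)\mu$; I expect the first effect to win if $V$ increases fast enough.

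If $M$ is compact, the assertion is trivial with $V=0$, since then $1 \in C_c^\infty(M) \subseteq W^1_0(M,\rho)$, so $Q_{\rho,0}$ is recurrent, hence conservative, and Theorem~\ref{theorem:criteria for generalized conservation}(a) applies. So assume $M$ noncompact. Fix $o\in M$ and an exhaustion $\Omega_1 \Subset \Omega_2 \Subset \cdots$ of $M$ by relatively compact open sets with $o\in\Omega_1$; then $M\setminus\Omega_{k+1}\neq\emptyset$ and $\delta_k := d_g(\overline{\Omega_k},M\setminus\Omega_{k+1}) > 0$ for every $k$. Choose smooth cut-offs $\eta_k\colon M\to[0,1]$ with $\eta_k\equiv 1$ on $\overline{\Omega_{k+1}}\setminus\Omega_k$ and $\supp\eta_k\subseteq\Omega_{k+2}\setminus\overline{\Omega_{k-1}}$, and for positive numbers $a_k$ to be fixed put $V := \sum_k a_k^2\eta_k$; this sum is locally finite, so $V\in C^\infty(M)$ and $V\ge 0$. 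Two observations drive the proof. First, since $\eta_k\equiv 1$ on $\Omega_{k+1}\setminus\overline{\Omega_k}$, one has $\rho+V\ge a_k^2$ there, so the locally Lipschitz function $f_k := a_k\min\{d_g(\cdot,\overline{\Omega_k}),\delta_k\}$ has $\nabla f_k = 0$ a.e.\ outside $\Omega_{k+1}\setminus\overline{\Omega_k}$ and $|\nabla f_k|\le a_k$ a.e.\ inside it, hence $|\nabla f_k|^2\le\rho+V$ a.e.; moreover $f_k(o)=0$ and $f_k\equiv a_k\delta_k$ on $M\setminus\Omega_{k+1}$. As $f_k$ is admissible in the supremum defining $d_{\rho+V}$, it follows that $d_{\rho+V}(o,x)\ge a_k\delta_k$ for all $x\notin\Omega_{k+1}$. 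Second, on $\Omega_{k+1}$ only $\eta_1,\dots,\eta_{k+1}$ can be nonzero, because $\supp\eta_j$ is disjoint from $\Omega_{k+1}$ whenever $j\ge k+2$; hence $V\le\sum_{j=1}^{k+1}a_j^2$ on $\Omega_{k+1}$, so that
$$m_{k+1} := \int_{\Omega_{k+1}}(\rho+V)\,d\mu \ \le\ C_{k+1} + C'_{k+1}\sum_{j=1}^{k+1}a_j^2 ,$$
with $C_{k+1} = \int_{\Omega_{k+1}}\rho\,d\mu$ and $C'_{k+1} = \mu(\Omega_{k+1})$ depending only on the exhaustion.

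From the first observation, if $a_k\delta_k\to\infty$ then every closed $d_{\rho+V}$-ball lies in some $\overline{\Omega_{k+1}}$ and is therefore compact, which gives the relative compactness hypothesis of Lemma~\ref{lemma:sturms test}. Also $\nu_{\rho+V}(r)\le m_{k+1}$ whenever $r < a_k\delta_k$, so if in addition $R_k := a_k\delta_k$ is strictly increasing, then, choosing $k_0$ with $R_{k_0-1}\ge 1$,
$$\int_1^\infty \frac{r\,dr}{\max\{\log\nu_{\rho+V}(r),1\}} \ \ge\ \sum_{k\ge k_0}\frac{R_k^2 - R_{k-1}^2}{2\max\{\log m_{k+1},1\}} ,$$
and it suffices to arrange that each summand is at least $1$, i.e.\ that $a_k^2\delta_k^2 - a_{k-1}^2\delta_{k-1}^2 \ge 2\max\{\log m_{k+1},1\}$, where the right-hand side is at most $2\log\!\big(C_{k+1}+C'_{k+1}\sum_{j\le k+1}a_j^2\big)$. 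Verifying that such a sequence $(a_k)$ exists is the technical heart of the proof and, I expect, the main obstacle. The constraint at stage $k$ couples $a_{k-1},a_k,a_{k+1}$, but it is benign because its left side grows quadratically in $a_k$ while its right side grows only logarithmically in the $a_j$; thus, proceeding inductively, at stage $k$ one may pick $a_k$ large enough to make $a_k\delta_k$ strictly larger than $a_{k-1}\delta_{k-1}$ (so $R_k\uparrow\infty$) and to leave the stage-$(k+1)$ inequality solvable, while still keeping $a_k$ below the (very large, roughly $\exp(c(a_{k-1}^2\delta_{k-1}^2 - a_{k-2}^2\delta_{k-2}^2))$) ceiling that the stage-$(k-1)$ inequality imposes on $a_k$; the inductive hypothesis ensures these lower and upper bounds on $a_k$ are compatible. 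With such an $(a_k)$, Lemma~\ref{lemma:sturms test} yields that $Q_{\rho+V,0}$ is conservative, and Theorem~\ref{theorem:criteria for generalized conservation}(b) gives that $Q_{\rho,V}$ is conservative in the generalized sense.

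For the final ``in particular'', suppose $(M,g)$ is geodesically complete and $c := \inf_M\rho > 0$. For any $x\in M$ the locally Lipschitz function $\sqrt{c}\,d_g(x,\cdot)$ satisfies $|\nabla(\sqrt{c}\,d_g(x,\cdot))|^2\le c\le\rho$ a.e., hence is admissible in the supremum defining $d_\rho$, so $d_\rho(x,\cdot)\ge\sqrt{c}\,d_g(x,\cdot)$. Therefore $d_\rho$-balls are contained in $d_g$-balls of $\sqrt{c}$ times the radius, which are relatively compact by Hopf--Rinow, so the hypothesis of the theorem is satisfied and the first part applies.
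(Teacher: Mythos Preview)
Your approach is genuinely different from the paper's, and in one respect stronger: you never use the hypothesis that $d_\rho$-balls are relatively compact. Your exhaustion-based construction of $V$ together with the test functions $f_k$ directly forces $d_{\rho+V}$-balls to be relatively compact, so your argument, once completed, would already establish the stronger statement that the paper explicitly leaves open in the remark after the theorem. By contrast, the paper takes a radial route: it sets $\psi(x)=f(d_g(o,x))$ for a smooth convex $f$ vanishing near $0$, chooses a smooth $V$ with $V\approx f'(d_g(o,\cdot))^2$, so that $\psi$ is admissible for $d_{\rho+V}$ and $B_r^{\rho+V}\subseteq B_{f^{-1}(r)}$ (geodesic ball); after the substitution $s=f^{-1}(r)$, Sturm's integral becomes $\int f(s)f'(s)/\max\{\log((\rho\mu)(B_s)+f'(s)^2\mu(B_s)),1\}\,ds$, which one makes diverge by taking $f=e^{g}$ for a sufficiently fast convex $g$. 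The price the paper pays is that it must control $(\rho\mu)(B_s)$ and $\mu(B_s)$ for geodesic balls, and it uses the hypothesis (via $d_{\rho+V}\ge d_\rho$) to guarantee the relative compactness needed for Sturm's lemma. The advantage is that no recursive coupling arises: the single function $f$ is chosen once.

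The one genuine gap in your proposal is the inductive construction of $(a_k)$. Your heuristic (the left side is quadratic in $a_k$, the right side only logarithmic) is correct, but the sentence ``the inductive hypothesis ensures these lower and upper bounds on $a_k$ are compatible'' does not yet close the loop. The ceiling $U_k$ that stage $k{-}1$ imposes on $a_k$ is determined by $a_{k-1}$ and by the constants $C_k,C'_k$; but the lower bound you need on $a_k$ at stage $k$ (``leave the stage-$(k{+}1)$ inequality solvable'') depends on $C_{k+1},C'_{k+1},\delta_{k+1}$ and, through the requirement that stage $k{+}1$ in turn be solvable, implicitly on $C_{k+2},C'_{k+2},\delta_{k+2}$ as well. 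Nothing in the choice of $a_{k-1}$ has accounted for this look-ahead, so as written there is no guarantee that the interval $(L_k,U_k]$ is nonempty. The fix is to strengthen the inductive hypothesis so that at stage $k{-}1$ you already demand $U_k$ to dominate an explicit threshold built from all manifold data up to index $k{+}2$; since $U_k$ grows like $\exp(c\,a_{k-1}^2\delta_{k-1}^2)$ while that threshold is only polynomial in $a_{k-1}$, this strengthened hypothesis propagates, but it has to be stated and verified. Your treatment of the ``in particular'' clause matches the paper's.
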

 \begin{proof}
   We fix $o \in M$. In this proof we denote by $d$ the geodesic distance induced by $g$ and by $B_r$ the closed $r$ ball around $o$ with respect to $d$. 
 
  For the `In particular'-part note that $c=\inf_{x \in M}\rho(x) >0$ implies $d_\rho \geq d_c$ and so 
  $B_r^\rho \subseteq B_r^c \subseteq B_{r/c} $, for all $r > 0$. Since by the Hopf-Rinow theorem the completeness of $(M,d)$ implies that balls with respect to $d$ are relatively compact, we obtain that open balls with respect to $d_\rho$ are relatively compact.
 
  According to Theorem~\ref{theorem:criteria for generalized conservation} it suffices to show that there exists $V \geq 0$ such that the form  $Q_{\rho + V,0}$ on $L^2(M,(\rho + V) \mu)$ is conservative. We employ Sturm's volume growth test for conservativeness to verify the existence of such a potential. Since $V \geq 0$ it is readily verified that $d_{\rho + V} \geq d_\rho$. In particular, if $d_\rho$-balls are relatively compact, so are $d_{\rho  + V}$-balls. Therefore, we only need to construct $V$ that satisfies Sturm's volume growth condition.
  
  Let $f:\R \to [0,\infty)$ be smooth and convex  such that there exists $r_0>0$ with $f(t) = 0$ for $t \leq r_0$ and such that $[r_0,\infty) \to [0,\infty), t \mapsto f(r)$ is strictly increasing. Since the geodesic distance $d$ has the Lipschitz constant equals 1,  
  Rademacher's theorem implies $|\nabla d(o,\cdot)| \leq 1$ and so the function 
  $$\psi:M \to [0,\infty), \psi(x) = f(d(o,x))$$ 
satisfies $| \nabla \psi (x)|^2 \leq f'(d(o,x))^2$ for almost every $x \in M$.
  %
  %
  We choose a smooth potential $V:M \to [0,\infty)$ such that $V(x) \ge f'(d(o,x))^2$. 
 With the properties of $\psi$ at hand, it follows from the definition of $d_{\rho + V}$ that
  $$d_{\rho + V}(x,o) \geq |\psi(x) - \psi(o)| = f(d(o,x)).$$
  Hence, for $r \geq r_0$ we have the inclusion
  $$B_r^{\rho + V} \subseteq B_{f^{-1}(r)},$$
  which leads to 
  $$\nu_{\rho + V}(r) \leq \int_{B_{f^{-1}(r)}} (\rho + V) d\mu \leq \int_{B_{f^{-1}(r)}} \rho d\mu + f'(f^{-1}(r))^2 \mu(B_{f^{-1}(r)}).$$
  For the last inequality we used the convexity of $f$. Using this and substituting $s = f^{-1}(r)$ we obtain
  \begin{align*}
   \int_{r_0}^\infty \frac{r}{\max\{\log \nu_{\rho + V}(r),1\} } dr &\geq \int_{r_0}^\infty \frac{r}{\max\{\log ((\rho\mu)(B_{f^{-1}(r)}) +  f'(f^{-1}(r))^2 \mu(B_{f^{-1}(r)}))  ,1\} } dr\\
     &=  \int_{f^{-1}(r_0)}^\infty \frac{f(s)f'(s)}{\max\{\log ((\rho\mu)(B_{s}) +  f'(s)^2 \mu(B_s))  ,1\} } ds.
  \end{align*}
  It is now a simple exercise to construct a function $f$ with the required properties such that the latter integral diverges. For example, one can choose $f$ such that  $f(r) = e^{g(r)}$ for $r$ large enough and some sufficiently large smooth convex function $g$. Now the claim follows from Sturm's volume growth criterion.  
 \end{proof}

 \begin{remark}
  For graphs it is known that one can always add a large enough potential to force the corresponding form to be conservative in the generalized sense, see \cite{KL}.
  
  We believe that   our theorem also holds for arbitrary manifolds, i.e., when balls with respect to $d_\rho$ need not be relatively compact. In this case, it should be possible to find $V$ such that $d_{\rho + V}$ has relatively compact balls. With this at hand, one can then argue as in the proof above.
 \end{remark}
 
 \begin{corollary} There exists a complete but stochastically incomplete Riemannian manifold (i.e. $Q_{1,0}$ is not conservative)  and a potential $V$ such that $Q_{1,V}$ is conservative in the generalized sense.
 \end{corollary}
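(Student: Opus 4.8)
The plan is to combine the construction already carried out in this section with a classical example of a geodesically complete but stochastically incomplete manifold; no new analysis is required.

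First I would take a model manifold $(M,g)$ of infinite radius with scaling function $\sigma$, measure $\mu = \vo$ and density $\rho = 1$. Every such model manifold is geodesically complete: the radial curves $r \mapsto (r,\theta)$ are geodesics of infinite length, the distance from $(r,\theta)$ to the pole equals $r$, and hence every bounded set is contained in a compact ball $\{r \le R\}$, so $(M,d)$ is complete. Choosing $\sigma$ to grow sufficiently fast, for instance $\sigma(r) = e^{r^{3}}$ for large $r$, makes $M$ stochastically incomplete: by Theorem~\ref{theorem:volume growth model manifold} applied with $V = 0$ (equivalently by the classical criterion, see e.g. \cite[Proposition~3.2]{Gri1}), conservativeness of $Q_{1,0}$ is equivalent to $\int_a^\infty v_1(r)/s(r)\,dr = \infty$ for some $a>0$, and with $s(r) = \omega_n\sigma^{n-1}(r)$ and $v_1(r) = \int_0^r s(\xi)\,d\xi$ this integral converges for such $\sigma$. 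Alternatively one may simply invoke the manifold of \cite{Aze}, which is geodesically complete but stochastically incomplete.

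Next I would apply Corollary~\ref{corollary:model manifold} to this model manifold with $\rho = 1$: it yields a radially symmetric smooth potential $V \ge 0$ for which $Q_{1,V}$ is conservative in the generalized sense. Equivalently, one can appeal directly to the preceding theorem of this subsection: since $(M,g)$ is complete, the Hopf--Rinow theorem gives that geodesic balls are relatively compact, and $\inf_M \rho = 1 > 0$, so its hypothesis is satisfied and a suitable $V$ exists. The pair consisting of $(M,g)$ and $V$ is then exactly what is claimed, and the proof is complete.

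The only point that requires any care is the selection of $(M,g)$, namely checking that a scaling function $\sigma$ can be chosen so that the volume integral above converges while the manifold remains complete; this is entirely classical, and in any case one may defer to \cite{Aze}. Granted this, the statement is an immediate corollary of Corollary~\ref{corollary:model manifold} (or of the previous theorem), so I do not anticipate any genuine obstacle.
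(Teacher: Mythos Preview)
Your proposal is correct and follows essentially the same approach as the paper. The paper's proof is a single sentence combining the existence of a complete but stochastically incomplete manifold with the preceding theorem; you supply more detail (an explicit model manifold) and note both Corollary~\ref{corollary:model manifold} and the preceding theorem as routes, either of which suffices.
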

 \begin{proof}
  This follows from the existence of a complete but stochastically incomplete Riemannian manifold  and our previous theorem.
 \end{proof}

\appendix

\section{Local regularity theory} \label{section:local regularity theory}

In this appendix we collect local regularity results for the operator $\cL$. They are consequences of the well-known local elliptic and local parabolic regularity theory in Euclidean spaces. In order to obtain versions on weighted manifolds, one just needs to localize the operators accordingly. For the case when $V = 0$ and $\rho = 1$, this can be found e.g. in \cite[Chapters~6 and 7]{Gri}. Since we assume  $V \geq 0$, the proofs given there can be carried trough verbatim in our situation (otherwise some slight modifications would be needed). In other words, for the reader who is well-acquainted with local regularity theory the following lemmas are simple exercises, while other readers may find the proofs in \cite{Gri}.

The following Sobolev embedding theorem is a version of \cite[Theorem~7.1]{Gri}. 

\begin{lemma}\label{lemma: local sobolev embeddings}
Let $f \in L^2_{\rm loc}(M)$.  If for each $m \geq 0$ we have $(\cL)^m f \in L^2_{\rm loc}(M)$, then $f \in C^\infty(M)$.  Moreover, if $(f_n)$ is a sequence in  $L^2_{\rm loc}(M)$ such that for each $m \geq 0$ also $(\cL)^m f_n \in L^2_{\rm loc}(M)$ and $(\cL)^m f_n \to (\cL)^m f$ in $L^2_{\rm loc}(M)$, then $f_n \to f$ locally uniformly.
\end{lemma}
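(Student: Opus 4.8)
The plan is to reduce the statement to the classical interior $L^2$-regularity theory for second order elliptic operators on Euclidean domains, exactly in the spirit of the proof of Theorem~\ref{theorem:main result}(d). Fix a point $p \in M$ and a chart $\varphi:U \to V \subseteq \R^d$ with $p \in U$; after shrinking $U$ we may assume $V$ is a ball. In these coordinates the operator $\cL = \rho^{-1}\Delta - \rho^{-1}V$ takes the form
$$L = \sum_{i,j} a_{ij}\partial_i\partial_j + \sum_i b_i\partial_i + c,$$
with smooth coefficients $a_{ij},b_i,c$ on $V$ (they are built from the Riemannian metric, the density $\Psi$ and the smooth functions $\rho, V$, with in particular $c = -V/\rho$), and after a further shrinking of $V$ the matrix $(a_{ij})$ is uniformly elliptic there. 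Writing $\bar f = f\circ\varphi^{-1}$, the hypothesis $\cL^m f \in L^2_{\rm loc}(M)$ translates into $L^m\bar f \in L^2_{\rm loc}(V)$ for every $m \ge 0$.

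First I would recall the classical interior estimate: for relatively compact open sets $V'' \Subset V' \Subset V$ and every $s\ge 0$ there is $C = C(s,V'',V',L)$ such that $\|u\|_{H^{s+2}(V'')} \le C\big(\|Lu\|_{H^s(V')} + \|u\|_{L^2(V')}\big)$ whenever $Lu \in H^s_{\rm loc}(V)$; see e.g.\ \cite[Chapter~6]{Eva} or \cite[Chapter~7]{Gri}. Iterating this estimate along a nested chain of relatively compact neighbourhoods of $p$, and applying it successively to $L^j\bar f$ for decreasing $j$ using that $L^j\bar f \in L^2_{\rm loc}(V)$ for all $j$, a straightforward induction on the Sobolev order gives $\bar f \in H^{2m}_{\rm loc}(V)$ for every $m$. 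The Sobolev embedding $H^s_{\rm loc}(V)\hookrightarrow C^k(V)$ for $s > k + d/2$ then shows $\bar f \in C^\infty(V)$, hence $f \in C^\infty(U)$; since $p$ was arbitrary, $f \in C^\infty(M)$.

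For the convergence statement I would run the same chain of estimates for the differences. Given a compact $K\subseteq M$ and $k\in\N_0$, cover $K$ by finitely many charts as above and fix $m$ with $2m > k + d/2$. Iterating the interior estimate for $L$ applied to $u = \bar f_n - \bar f$ yields, on each chart and on relatively compact sets $V'' \Subset V'$ covering the relevant part of $K$,
$$\|\bar f_n - \bar f\|_{H^{2m}(V'')} \le C\sum_{j=0}^{m}\|L^j(\bar f_n - \bar f)\|_{L^2(V')},$$
and the right-hand side tends to $0$ by the assumption $\cL^j f_n \to \cL^j f$ in $L^2_{\rm loc}(M)$ transported to the chart. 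Combining this with $H^{2m}(V'')\hookrightarrow C^k(V'')\hookrightarrow C(V'')$ gives $f_n \to f$ uniformly on $K$, i.e.\ locally uniformly on $M$.

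The only genuinely nontrivial input is the interior elliptic estimate together with the bootstrap, both of which are entirely classical; the ``main obstacle'' is really just bookkeeping: passing from the Euclidean estimates to the manifold via charts (and, if one wishes, a partition of unity), keeping careful track of the nested domains along the iteration, and noting that the smoothness and strict positivity of $\rho$ (and smoothness of $V \ge 0$) guarantee uniform ellipticity and smooth coefficients of $L$ on relatively compact sets, so that the nonvanishing potential changes nothing in the argument. This is precisely the point made in the opening paragraph of this appendix, and it is why one may invoke \cite[Chapters~6 and 7]{Gri} essentially verbatim.
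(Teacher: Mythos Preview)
Your proposal is correct and is precisely the approach the paper has in mind: the paper does not give a proof but simply refers to \cite[Theorem~7.1]{Gri} and remarks that the arguments there carry over verbatim since the lower-order term $-\rho^{-1}V$ does not affect interior elliptic regularity. Your sketch of localizing via charts, invoking the classical interior $H^s$-estimate for a uniformly elliptic operator with smooth coefficients, bootstrapping to $H^{2m}_{\rm loc}$ for all $m$, and concluding with Sobolev embedding is exactly this standard route.
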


The following hypoellipticity statement is a version of \cite[Theorem~7.4]{Gri}.

\begin{lemma}\label{lemma:parabolic regularity}
 Let $f \in C^\infty(M)$ and let $u \in \mathcal{D}'((0,T) \times M)$  satisfy
 $$\partial_t u - \cL u = f.$$
 Then $u \in C^\infty((0,T) \times M)$.
\end{lemma}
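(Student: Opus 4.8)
The plan is to reduce the statement to the classical local parabolic regularity theory on Euclidean space by passing to a coordinate chart, in the same spirit as the localisation carried out in the proof of Theorem~\ref{theorem:main result}(d).

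First I would fix an arbitrary point $(t_0,x_0) \in (0,T) \times M$ and a chart $\varphi \colon U \to V \subseteq \R^d$ with $x_0 \in U$. Pulling $\cL = \rho^{-1}\Delta - \rho^{-1}V$ back along $\varphi$, it becomes a second order differential operator
$$
L\bar v = \sum_{i,j} a_{ij}\,\partial_i\partial_j \bar v + \sum_i b_i\,\partial_i \bar v + c\,\bar v
$$
on $V$ with smooth coefficients $a_{ij},b_i,c$ whose principal part $(a_{ij})$ is positive definite (it equals $\rho^{-1}$ times the inverse metric tensor in these coordinates), so that after shrinking $U$ it is uniformly elliptic on a relatively compact neighbourhood of $\varphi(x_0)$; here $c = -\rho^{-1}V \leq 0$, but the sign of the lower order terms is irrelevant for what follows. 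Hence $\partial_t - L$ is uniformly parabolic with smooth coefficients on $I \times V'$ for a small interval $I \ni t_0$ and a relatively compact open $V' \subseteq V$ containing $\varphi(x_0)$. The distribution $\bar u := u \circ (\mathrm{id}_I \times \varphi^{-1})$ then satisfies $(\partial_t - L)\bar u = \bar f$ on $I \times V'$ in $\mathcal{D}'(I \times V')$ with $\bar f = f \circ \varphi^{-1}$ smooth.

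At this point I would invoke the classical interior regularity theory for uniformly parabolic operators with smooth coefficients: any distributional solution of $(\partial_t - L)\bar u = \bar f$ with smooth $\bar f$ is itself smooth. This is exactly \cite[Theorem~7.4]{Gri} specialised to a bounded domain in $\R^d$ (see also standard references on linear parabolic PDE, e.g. \cite{Eva}). If one prefers to run the argument directly, the usual route is a bootstrap: a distribution is locally contained in some anisotropic Sobolev space of negative order; multiplying $\bar u$ by a space--time cutoff and using the equation to trade each time derivative for two spatial derivatives, one applies the standard interior parabolic a priori estimates repeatedly to obtain $\bar u \in H^s_{\mathrm{loc}}$ for every $s$, and an anisotropic Sobolev embedding then yields $\bar u \in C^\infty$. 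Transporting this back through $\varphi$ gives $u \in C^\infty$ near $(t_0,x_0)$, and since $(t_0,x_0)$ was arbitrary, $u \in C^\infty((0,T) \times M)$.

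The main obstacle is organisational rather than conceptual. Because we assume $V \geq 0$, the zeroth order coefficient $c$ in local coordinates is nonpositive, so the regularity proofs of \cite[Chapter~7]{Gri}, written there for $\rho = \Psi = 1$ and $V = 0$, go through verbatim once the above localisation is in place; no genuinely new input is required. The one step deserving a little care is that $u$ is a priori only a distribution, so the first regularity gain in the bootstrap (equivalently, the representation of $\bar u$ through a smooth Euclidean parametrix for $\partial_t - L$) must be set up carefully; after that, the iteration and the final chart-dependent transport of smoothness are routine.
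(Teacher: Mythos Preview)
Your proposal is correct and follows exactly the approach the paper indicates: the paper does not give an explicit proof but simply states that the lemma is a version of \cite[Theorem~7.4]{Gri} and that the proofs in \cite[Chapters~6 and 7]{Gri} carry through verbatim after localising $\cL$ in a chart. You have spelled out precisely this localisation and appeal to classical Euclidean parabolic regularity, so there is nothing to add; the one small remark is that the sign of the zeroth order coefficient is in fact irrelevant for hypoellipticity (you note this yourself earlier), so the nonnegativity of $V$ plays no genuine role in this particular lemma.
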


%
%

\section{Convergence of Semigroups}

Let $\Omega \subseteq M$ an open subset. We let $(T_t^\Omega)$ denote the $L^2(\Omega,\rho\mu)$ semigroup of the Dirichlet form 
$$Q^\Omega_{\rho,V}(f,g) = \int_\Omega \as{\nabla f,\nabla g} d \mu + \int_\Omega Vfg d\mu$$
with domain $D(Q^\Omega_{\rho,V}) = W_0^1(\Omega,\rho + V)$. We extend it to $f \in L^2(M,\rho\mu)$ by letting
$$T_t^\Omega f := \begin{cases} T^\Omega_t f|_\Omega &\text{on } \Omega\\
                   0 &\text{on } M\setminus \Omega.
                  \end{cases}
$$
Similarly, we denote by $(G_\alpha^\Omega)$ the resolvent of $Q^\Omega_{\rho,V}$ extended by $0$ outside of $\Omega$.

\begin{lemma}\label{lemma:convergence of semigroups}
 Let $(\Omega_n)$ an ascending sequence of open subsets of $M$ with $\bigcup_n \Omega_n = M$. Then, for every $t > 0$ and $\alpha>0$  we have $T_t^{\Omega_n} \to \Tr$ and $G_\alpha^{\Omega_n} \to \Gr$ strongly in $L^2(M)$, as $n \to \infty$.
\end{lemma}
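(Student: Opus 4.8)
The plan is to recognise this as an instance of monotone convergence of quadratic forms, applied to the part forms $Q^{\Omega_n}_{\rho,V}$. First I would view each $Q^{\Omega_n}_{\rho,V}$ as a closed form on the single Hilbert space $L^2(M,\rho\mu)$ by extending functions by zero: for $f\in W^1_0(\Omega_n,\rho+V)$ its zero-extension $\tilde f$ lies in $W^1(M,\rho+V)$ with weak gradient the zero-extension of $\nabla f$ (approximate $f$ by $C_c^\infty(\Omega_n)$-functions, whose zero-extensions lie in $C_c^\infty(M)$ and converge in $W^1(M,\rho+V)$; then invoke closedness of the weak gradient). Under this isometric embedding $Q^{\Omega_n}_{\rho,V}$ is precisely the restriction of $Q_{\rho,V}$ to the closed subspace $W^1_0(\Omega_n,\rho+V)\subseteq W^1_0(M,\rho+V)$, and the self-adjoint operator it generates on $L^2(\Omega_n,\rho\mu)$, extended by $0$ on the orthogonal complement, is the one whose semigroup and resolvent are the extended operators $T^{\Omega_n}_t$ and $G^{\Omega_n}_\alpha$ from the statement.

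Next I would check the monotonicity and identify the limit. Because $\Omega_n\subseteq\Omega_{n+1}$ we have $W^1_0(\Omega_n,\rho+V)\subseteq W^1_0(\Omega_{n+1},\rho+V)$, and on the smaller domain both forms equal $Q_{\rho,V}(f)$; since the form domains increase while the values are unchanged on the smaller domains, this is a monotone sequence of closed forms in the sense relevant for Kato's monotone convergence theorem. Its monotone limit is the form $f\mapsto Q_{\rho,V}(f)$ on $\bigcup_n W^1_0(\Omega_n,\rho+V)$, and the point is to identify its closure. The relevant form domain is the $\|\cdot\|_{W^1}$-closure of $\bigcup_n W^1_0(\Omega_n,\rho+V)$. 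It is contained in $W^1_0(M,\rho+V)$ since each term is; and it contains $\bigcup_n C_c^\infty(\Omega_n)=C_c^\infty(M)$ --- here I use that every compact subset of $M$ lies in some $\Omega_n$ --- hence it contains the $\|\cdot\|_{W^1}$-closure of $C_c^\infty(M)$, namely $W^1_0(M,\rho+V)$. So the closed limit form is exactly $Q_{\rho,V}$, with associated operator $L_{\rho,V}$.

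With this in place, the monotone convergence theorem for a monotone sequence of closed forms gives $L^{\Omega_n}_{\rho,V}\to L_{\rho,V}$ in the strong resolvent sense on $L^2(M,\rho\mu)$; in particular $G^{\Omega_n}_\alpha\to\Gr$ strongly for each $\alpha>0$, and strong resolvent convergence then upgrades, via the Trotter--Kato theorem, to $T^{\Omega_n}_t\to\Tr$ strongly, locally uniformly in $t>0$. For the case $\rho=1$, $V=0$ this is the familiar exhaustion argument for the Dirichlet heat semigroup, cf.\ \cite{Gri}; an alternative that avoids the form machinery is to first prove $T^{\Omega_n}_tf\nearrow\Tr f$ and $G^{\Omega_n}_\alpha f\nearrow\Gr f$ pointwise for $0\le f\in L^2(M,\rho\mu)$ (a standard monotonicity of part semigroups and resolvents), deduce $L^2$-convergence from dominated convergence, and extend to general $f$ by linearity.

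The step requiring the most care --- and the one I would regard as the main obstacle --- is the bookkeeping that lets one treat these genuinely non-densely-defined part forms on the fixed space $L^2(M,\rho\mu)$, together with the correct identification of the monotone limit form as $Q_{\rho,V}$; once that is set up, the conclusion is an immediate application of standard monotone-form convergence.
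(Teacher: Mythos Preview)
Your argument is correct. You set the problem up via Kato--Simon monotone convergence of closed forms on the fixed space $L^2(M,\rho\mu)$, identify the closure of the increasing union of domains as $W^1_0(M,\rho+V)$ using $\bigcup_n C_c^\infty(\Omega_n)=C_c^\infty(M)$, and then pass from strong resolvent convergence to semigroup convergence via Trotter--Kato; you also sketch the alternative pointwise-monotonicity route. The paper instead invokes the \emph{generalized Mosco convergence} framework of Chen--Kim--Kumagai, keeping the forms on the varying spaces $L^2(\Omega_n,\rho\mu)$ with restriction/extension maps $\pi_n$, $E_n$, and verifies the liminf and recovery conditions directly. The substantive identification step---that $C_c^\infty$-approximants to a given $f\in D(Q_{\rho,V})$ can be reindexed so that their supports lie in $\Omega_n$---is the same in both arguments. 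Your route is more elementary in that it avoids the CKK machinery, at the cost of the bookkeeping for non-densely-defined forms that you flag; the paper's route sidesteps that bookkeeping by working on the genuine $L^2(\Omega_n)$-spaces but imports a heavier abstract theorem.
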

\begin{proof}
We prove that $\Qr^{\Omega_n}$ converges to $\Qr$ in the generalized Mosco sense, see. \cite[8~Appendix]{CKK} for a definition. The desired statement then follows from \cite[Theorem~8.3]{CKK}.

We denote by $\pi_n:L^2(M,\rho \mu) \to L^2(\Omega_n,\rho \mu)$ the restriction $f \mapsto f|_{\Omega_n}$. Its adjoint $E_n := \pi_n^*$ extends a function in  $L^2(\Omega_n,\rho \mu)$ to $M$ by letting it equal to $0$ outside of $\Omega_n$. For verifying generalized Moso convergence, we need to prove the following statements.
\begin{itemize}
 \item[(a)] For $f_n \in L^2(\Omega_n,\rho \mu)$, $f \in L^2(M,\rho \mu)$ with $E_n f_n \to f$ weakly in $L^2(M,\rho \mu)$ the inequality
 $$\Qr(f) \leq \liminf_{n \to \infty}Q^{\Omega_n}_{\rho,V}(f_n)  $$
 holds (with the convention $\Qr(g) = \infty$ if $f \not \in D(\Qr)$).
 \item[(b)] For every $f \in D(\Qr)$ there exist $f_n \in D(\Qr^{\Omega_n})$ with $E_n f_n \to  f$ strongly in $L^2(M,\rho\mu)$ and
 $$\limsup_{n\to \infty} \Qr^{\Omega_n}(f_n) \leq \Qr(f).$$
\end{itemize}
The closedness of $\Qr$ implies that it is lower semicontinuous with respect to weak convergence. Hence, $E_n f_n \to f$ weakly in $L^2(M,\rho \mu)$ yields
$$\Qr(f) \leq \liminf_{n\to \infty}\Qr(E_n f_n).$$
It follows from the definition of $\Qr^{\Omega_n}$ that $\Qr(E_n f_n) = Q^{\Omega_n}_{\rho,V}(f_n)$. This proves (a).

For proving (b) we use that $C_c^\infty(M)$ is dense in $D(\Qr)$ with respect to the form norm. For given $f \in D(\Qr)$ let $(f_n)$ a sequence in $C_c^\infty(M)$ that converges to $f$ with respect to $\|\cdot\|_{W^1}$. From this sequence we can build up a sequence $(g_n)$  with ${\rm supp}\, g_n \subseteq \Omega_n$ and $g_n \to f$ with respect to $\|\cdot\|_{W^1}$ as follows.  For $n \in \N$ we define 
$$k_n := \max \{k\leq n \mid {\rm supp}\, f_k \subseteq \Omega_n\}$$
and set $g_n := f_{k_n}$. By construction we have  ${\rm supp}\, g_n \subseteq \Omega_n$. The sequence $(k_n)$ is increasing and since the $(f_n)$ have compact support, it diverges. We obtain $g_n \to f$ with respect to $\|\cdot\|_{W^1}$.  These considerations imply $\pi_n g_n \in D(Q^{\Omega_n}_{\rho,V})$ and
$$\limsup_{n\to \infty} Q^{\Omega_n}_{\rho,V}(\pi_n g_n) = \limsup_{n\to \infty} \Qr(g_n) = \Qr(f).$$
This finishes the proof.
\end{proof}

\section{Two lemmas on measurable choices}

The following lemmas  are certainly well known to experts. Since we could not find proper references, we include their  proofs for the convenience of the reader. Let $0 < T \leq \infty$. By $\lambda$ we denote the Lebesgue measure on $(0,T)$. Let $u:(0,T) \to  L^2(M,\mu)$. A measurable   function $\tilde u \in L_\mathrm{loc}^1((0,T) \times M,\lambda \otimes \mu)$ such that  $u(t) = \tilde u_t$ in $L^2(M,\mu)$ for $\lambda$-a.e. $t \in (0,T)$ is called a {\em locally integrable version of $u$.} Note that by Fubini's theorem this is well-defined, i.e., it is independent of the choice of the representative of $\tilde u$.

\begin{lemma}\label{lemma:measurable choice} 
 Let $0 < T \leq \infty$ and let $u:(0,T) \to L^2(M,\mu)$ continuous.  Then there exists a locally integrable version of $u$.
\end{lemma}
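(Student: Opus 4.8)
The plan is to invoke the standard construction of a jointly measurable representative of a Banach-space valued map: approximate $u$ by step functions in the time variable---for which a jointly measurable version is evident---and pass to an $L^1$-limit. Since $\mu$ may be infinite, $L^2(M,\mu)$ need not embed into $L^1(M,\mu)$, so the argument has to be carried out on relatively compact pieces of $M$ and then patched together; this localization is the only genuinely delicate point, the approximation and identification steps being routine once it is in place.

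First I would fix a compact interval $[a,b]\subset(0,T)$ and a relatively compact open $\Omega\subseteq M$, so that $\mu(\Omega)<\infty$. Using that $u$ is uniformly continuous on the compact set $[a,b]$, for each $n\in\N$ I partition $[a,b]$ into intervals $I_{n,1},\dots,I_{n,k_n}$ of length at most $1/n$, pick $t_{n,j}\in I_{n,j}$ and a pointwise representative $f_{n,j}$ of $u(t_{n,j})$, and set $\tilde u_n(t,x)=\sum_{j}\mathbf{1}_{I_{n,j}}(t)\,f_{n,j}(x)$. Each $\tilde u_n$ is $\lambda\otimes\mu$-measurable, belongs to $L^1([a,b]\times\Omega)$, and uniform continuity gives $\sup_{t\in[a,b]}\|(\tilde u_n)_t-u(t)\|_2\to0$. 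By Cauchy--Schwarz on $\Omega$ one then has $\|(\tilde u_n)_t-(\tilde u_m)_t\|_{L^1(\Omega)}\le\mu(\Omega)^{1/2}\|(\tilde u_n)_t-(\tilde u_m)_t\|_2$ uniformly in $t$, so $(\tilde u_n)$ is Cauchy in $L^1([a,b]\times\Omega)$ and converges there to a jointly measurable $\tilde u^{a,b,\Omega}$. Passing to a subsequence along which $\tilde u_n\to\tilde u^{a,b,\Omega}$ holds $\lambda\otimes\mu$-a.e., Fubini's theorem produces a $\lambda$-conull set of times $t$ with $(\tilde u_n)_t\to(\tilde u^{a,b,\Omega})_t$ $\mu$-a.e.\ on $\Omega$; for such $t$ one also has $(\tilde u_n)_t\to u(t)$ in $L^2(M,\mu)$, hence in $L^2(\Omega)$, and comparing this $\mu$-a.e.\ limit with an a.e.-convergent subsequence of the $L^2$-convergent sequence forces $(\tilde u^{a,b,\Omega})_t=u(t)|_\Omega$ $\mu$-a.e.\ on $\Omega$. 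Thus $\tilde u^{a,b,\Omega}$ is a locally integrable version of $u$ over $[a,b]\times\Omega$.

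Finally I would exhaust $(0,T)$ by compact intervals $[a_k,b_k]$ and $M$ by relatively compact open sets $\Omega_k$. Any two of the functions $\tilde u^{a_k,b_k,\Omega_k}$ are versions of $u$ in the sense just established, so by Fubini they agree $\lambda\otimes\mu$-a.e.\ on the intersection of their domains, and hence they patch to a single $\tilde u\in L^1_{\mathrm{loc}}((0,T)\times M,\lambda\otimes\mu)$. For $\lambda$-a.e.\ $t$ this $\tilde u_t$ coincides with $u(t)$ on every $\Omega_k$, hence $\mu$-a.e.\ on $M$, that is $\tilde u_t=u(t)$ in $L^2(M,\mu)$, which is the assertion. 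Continuity of $u$ enters only to supply the uniform step-function approximation cheaply; the same argument works for any strongly measurable, locally Bochner integrable family, so I do not expect any real obstacle beyond keeping track of the space-localization that the possibility $\mu(M)=\infty$ forces upon us.
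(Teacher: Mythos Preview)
Your argument is correct, but it takes a different route from the paper. The paper exploits the Hilbert-space structure of $L^2(M,\mu)$: it fixes a countable orthonormal basis $(f_k)$, forms the finite partial sums $u_{n,l}(t,x)=\sum_{k=1}^l\langle u(t),f_k\rangle\,1_{I_n}(t)\,f_k(x)$ on compact time intervals $I_n$, and uses Parseval's identity to control their $L^2((0,T)\times M)$ norms uniformly in $l$. Passing to the limit in $l$ and then in $n$ produces $\tilde u$ directly as an element of $L^2_{\mathrm{loc}}$ in time, and Parseval again gives $\tilde u_t=u(t)$ for a.e.\ $t$. No localisation in the space variable is needed because the basis expansion already lives globally in $L^2(M,\mu)$.

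Your step-function approximation is more elementary and does not rely on the inner-product structure; the trade-off is precisely the extra localisation to relatively compact $\Omega$ and the subsequent patching, which you handle correctly. Your approach would carry over verbatim to continuous maps into any Banach space of function type (indeed to any strongly measurable map), whereas the paper's proof is tailored to the $L^2$ setting.
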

\begin{proof}
 Since the Borel-$\sigma$-algebra of $M$ is countably generated, $L^2(M,\mu)$ is separable. Let $(f_k)_{k \geq 1}$ be a countable orthonormal basis for $L^2(M,\mu)$. Moreover, let $I_n \subseteq (0,T)$ increasing compact intervals with $\bigcup_n I_n = (0,T)$.  For $k,n \in \N$ the maps 
 $$g_{n,k} : (0,T) \times M \to \R,\, (x,t) \mapsto \as{u(t),f_k} 1_{I_n}(t) f_k(x)$$
 are clearly measurable. We consider
 $$u_{n,l}:= \sum_{k = 1}^l g_{n,k}.$$
  Parseval's inequality in  $L^2(M,\mu)$ and the strong continuity of  $u$ imply
 $$\int_{0}^T \int_M |u_{n,l}|^2 d \mu d\lambda \leq \int_{0}^T 1_{I_n}(t) \|\tilde u(t)\|^2 d \lambda(t) < \infty,   $$
 uniformly. Hence, for each $n \in \N$ the limit $u_n := \lim_{l\to \infty} u_{n,l}$ exists in $L^2((0,T) \times M,\lambda \otimes \mu)$. For $n \geq m$ the functions $u_n$ and $u_m$ only differ on $I_n \setminus I_m$; indeed we have $u_m = u_n 1_{I_m \times M}$. Hence, the limit $\tilde u = \lim_{n\to \infty} u_{n}$ exists in $L^1_{\rm loc}((0,T) \times M,\lambda \otimes \mu)$  and satisfies $\tilde u1_{I_n \times M} = u_n$.   Parseval's identity and the properties of $\tilde u$ yield
 \begin{align*}
  \int_0^T \|\tilde u_t- u(t)\|^2 d \lambda(t) &= \lim_{n\to \infty} \int_{I_n} \|\tilde u_t - u(t)\|^2 d \lambda(t) \\
  &= \lim_{n\to \infty} \int_{I_n} \|u_n (t,\cdot) - \tilde u(t)\|^2 d \lambda(t)\\
  &= \lim_{n\to \infty}\lim_{l\to \infty} \int_{I_n} \|(u_{n,l})_t - u(t)\|^2 d\lambda(t)\\
  &= \lim_{n \to \infty} \lim_{l\to \infty} \int_{I_n} \sum_{k = l+1}^\infty |\as{ u(t),f_k}_2|^2 d\lambda (t).
 \end{align*}
 The strong continuity of $u$ yields
 $$\sup_{t \in I_n} \sum_{k = l+1}^\infty |\as{u(t),f_k}_2|^2 \leq \sup_{t \in I_n} \|u(t)\|^2_2 < \infty, \text{ for all } l \in \N.$$
 Hence, Lebesgue's theorem yields
 $$ \lim_{l\to \infty} \int_{I_n} \sum_{k = l+1}^\infty |\as{\tilde u(t),f_k}_2|^2 d \lambda (t) = 0,$$
 and the assertion $\tilde u_t = u(t)$ in $L^2(M,\mu)$ for $\lambda$-a.e. $t \in (0,T)$ is proven. 
\end{proof}

We say that $u:(0,T) \to L^2(M,\mu)$ is continuously differentiable if for all $0 < s < T$ the limit
$$u'(s) = \lim_{h \to 0} \frac{1}{h}(u(s + h) - u(s))$$
exists in $L^2(M,\mu)$ and $u':(0,T) \to L^2(M,\mu)$ is continuous. In the following lemma $\partial_t$ denotes the distributional time derivative on $\mathcal{D}'((0,\infty) \times M)$.

\begin{lemma}\label{lemma:measurable choice2}
 Let $0 < T \leq \infty$ and let $u:(0,T) \to L^2(M,\mu)$ be continuously differentiable.  Then there exists a locally integrable version $\tilde u$ of $u$ such that $\partial_t \tilde u \in L^1_\mathrm{loc}((0,T) \times M,\lambda \otimes \mu)$ is a locally integrable version of $u'$. 
\end{lemma}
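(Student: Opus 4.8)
The plan is to apply Lemma~\ref{lemma:measurable choice} to both $u$ and $u'$ and then to check that the distributional time derivative of the version of $u$ produced this way coincides with the version of $u'$. First I would record that ``continuously differentiable'' (in the sense defined just above the statement) includes the continuity of $u':(0,T)\to L^2(M,\mu)$, so Lemma~\ref{lemma:measurable choice} applies both to $u$ and to $u'$, producing locally integrable versions $\tilde u$ and $\tilde w$ with $\tilde u_t=u(t)$ and $\tilde w_t=u'(t)$ in $L^2(M,\mu)$ for $\lambda$-a.e.\ $t\in(0,T)$. Since $\mu=\Psi\vo$ with $\Psi$ smooth and strictly positive, we also have $\tilde u,\tilde w\in L^1_{\rm loc}((0,T)\times M,\lambda\otimes\vo)$, so both define distributions on $(0,T)\times M$. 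It then remains to prove $\partial_t\tilde u=\tilde w$ in $\mathcal D'((0,T)\times M)$; granting this, $\partial_t\tilde u$ is represented by the $L^1_{\rm loc}$-function $\tilde w$, which by construction is a locally integrable version of $u'$, and the lemma follows.

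To establish the distributional identity it is enough to test against product functions $\varphi(t,x)=\psi(t)\Phi(x)$ with $\psi\in C_c^\infty((0,T))$ and $\Phi\in C_c^\infty(M)$, since their linear span is sequentially dense in $\mathcal D((0,T)\times M)$ and distributions are continuous. Fixing such $\psi$ and $\Phi$, I would set $f:=\Phi/\Psi\in C_c^\infty(M)\subseteq L^2(M,\mu)$ and $g(t):=\as{u(t),f}_\mu=\int_M u(t)\Phi\,d\vo$. Because $u$ is continuously differentiable into $L^2(M,\mu)$ and $h\mapsto\as{h,f}_\mu$ is a bounded linear functional, $g\in C^1((0,T))$ with $g'(t)=\as{u'(t),f}_\mu=\int_M u'(t)\Phi\,d\vo$, so that the elementary identity $\int_0^T\psi'(t)g(t)\,dt=-\int_0^T\psi(t)g'(t)\,dt$ holds. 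Using $\tilde u_t=u(t)$ and $\tilde w_t=u'(t)$ for a.e.\ $t$ together with Fubini's theorem (legitimate since $\psi\otimes\Phi$ is bounded with compact support and $\tilde u,\tilde w\in L^1_{\rm loc}$) I would then compute
\begin{align*}
 -\as{\tilde u,\partial_t\varphi}
 &= -\int_0^T\int_M \tilde u(t,x)\,\psi'(t)\Phi(x)\,d\vo(x)\,dt = -\int_0^T\psi'(t)g(t)\,dt\\
 &= \int_0^T\psi(t)g'(t)\,dt = \int_0^T\int_M \tilde w(t,x)\,\psi(t)\Phi(x)\,d\vo(x)\,dt = \as{\tilde w,\varphi},
\end{align*}
which is precisely $\partial_t\tilde u=\tilde w$ tested against $\varphi$.

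The only delicate points --- where carelessness would cause trouble --- are the bookkeeping between the two measures $\mu$ and $\vo$ (resolved by the smooth strictly positive density $\Psi$, so that testing against $\Phi$ with respect to $\vo$ corresponds to testing against $\Phi/\Psi$ with respect to $\mu$, which keeps $f\in L^2(M,\mu)$) and the reduction to product test functions. Strong continuous differentiability of $u$ is used exactly once, to differentiate $g$ under the pairing with the fixed vector $f$; everything else, namely the scalar fundamental theorem of calculus for $g$ and the application of Fubini, is routine.
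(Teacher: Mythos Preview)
Your proof is correct and follows the same overall strategy as the paper: apply Lemma~\ref{lemma:measurable choice} to both $u$ and $u'$, then verify that the distributional time derivative of the version of $u$ agrees with the version of $u'$. The only difference is in the verification step---the paper tests against a general $\varphi\in\mathcal D((0,T)\times M)$ via a difference-quotient argument and dominated convergence, whereas you reduce to tensor products $\psi(t)\Phi(x)$ and use the scalar fundamental theorem of calculus for $g(t)=\langle u(t),\Phi/\Psi\rangle_\mu$; your route is slightly cleaner and also handles the $\mu$ versus $\vo$ bookkeeping more explicitly.
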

 \begin{proof}
  According to the previous lemma $u$ and $u'$ have locally integrable versions $\tilde u$ and $v$, respectively. Hence, it suffices to prove $v = \partial_t \tilde u$. For $\varphi \in \mathcal{D}((0,\infty)\times M)$ we compute
 \begin{align*}
  \as{\tilde u,\partial_t \varphi} &=   \int_0^\infty \int_M \tilde u_s(x) (\partial_t \varphi)_s(x) d\mu(x) d\lambda(s)   \\
  &= \lim_{h\to 0} h^{-1}  \int_0^\infty \int_M  \tilde u_s(x) (\varphi_{s + h}(x) - \varphi_s(x))d\mu(x) d \lambda(s) \\
  &=  \lim_{h\to 0} h^{-1}  \int_0^\infty \int_M (\tilde u_{s-h}(x) - \tilde u_s(x)) \varphi_s(x)d \mu(x) d\lambda(s) \\
  &= \lim_{h\to 0} h^{-1} \int_0^\infty  \as{u(s-h) - u(s),\varphi_s}_\mu  d \lambda(s) \\
  &= - \int_{0}^\infty  \as{u'(s),\varphi_s}_\mu d\lambda (s).   
 \end{align*}
  For the second to last equality we used that $\tilde u$ is a locally integrable version of $u$. Moreover, for the last equality we used a standard result for differentiating under the integral sign using that $u$ is continuously differentiable and $\varphi$ has compact support in $(0,T) \times M$. Since $v$ is a locally integrable version of $u'$, we further obtain
  $$\int_{0}^\infty  \as{u'(s),\varphi_s}_\mu d\lambda (s) = \int_{0}^\infty  \as{v,\varphi_s}_\mu d\lambda (s) = \as{v,\varphi}.$$
  This proves $\partial_t \tilde u = v$, as by definition $\as{\partial_t \tilde u, \varphi} = - \as{\tilde u,\partial_t \varphi}$. 
 \end{proof}


 \bibliographystyle{plain}
 
\bibliography{literatur}

\end{document}